\titleformat{\subsection}{\it}{\thesubsection.\enspace}{1pt}{}
\newtheorem{theo}{Theorem}[section]
\newtheorem{lemm}[theo]{Lemma}
\newtheorem{defi}[theo]{Definition}
\newtheorem{prop}[theo]{Proposition}
\numberwithin{equation}{section}
\begin{document}
\title{Local well-posedness and global existence for the Popowicz system}

\author{
    Wei $\mbox{Tan}^{1,2}$ \footnote{Email: tanwei1008@126.com} \quad and\quad
    Zhaoyang $\mbox{Yin}^{1,3}$ \footnote{Corresponding author.E-mail: mcsyzy@mail.sysu.edu.cn}\\
    $^1\mbox{Department}$ of Mathematics, Sun Yat-sen University,\\
    Guangzhou, 510275, China\\
    $^2\mbox{College}$ of Mathematics and Statistics, Jishou University,\\
     Jishou, 416000, China\\
    $^3\mbox{Faculty}$ of Information Technology,\\
    Macau University of Science and Technology, Macau, China}

\date{}
\maketitle
\hrule

\begin{abstract}
Popowicz system, as the interacting system of Camassa-Holm and Degasperis-Procesi equations, has attracted some attention in recent years. In this paper,
   we first study the local well-posedness for the cauchy problem of Popowicz system in nonhomogeneous Besov spaces $B^s_{p,r}\times B^s_{p,r}$ with $s> \max\{2, \frac{1}{p}+\frac{3}{2}\}$
   or $(s=2, 2\leq p \leq \infty, 1\leq r\leq 2)$. Moreover, a new blow-up criterion and global existence with different initial values are obtained.

\vspace*{5pt}
\noindent {\it 2010 Mathematics Subject Classification}: 35Q30, 76B03, 76D05, 76D99.

\vspace*{5pt}
\noindent{\it Keywords}: Popowicz system, Local well-posedness, Global existence, Bseov spaces, Bony decomposition.
\end{abstract}

\vspace*{10pt}

\tableofcontents

\section{Introduction}
The aim of this paper is to study the local well-posedness in Besov spaces and global existence with different initial values for the Popowicz system:
\begin{equation}\label{eq1}
 \left\{
  \begin{array}{l}
  m_{t}+(2u+v)m_x+3(2u_x+v_x)m=0,\\
  n_t+(2u+v)n_x+2(2u_x+v_x)n=0,\\
  u(0,x)=u_0(x),v(0,x)=v_{0}(x),
  \end{array}
 \right.
\end{equation}
where $m=u-u_{xx}$ and $n=v-v_{xx}$. The system \eqref{eq1} was first derived recently by Z. Popowicz in \cite{1} and \cite{2},
based on the observation that the second Hamiltonian operator of the Degasperis-Procesi equation could be considered as the Dirac reduced Poisson
tensor of the second Hamiltonian operator of the Boussinesq equation.
It can be regarded as a system coupled by the camassa-Holm (C-H) and Degasperis-Procesi (D-P) equations,
so it is also called interacting system of the C-H and D-P equations by some scholars \cite{3, 4, 5}.

Actually, when $m=u=0$, Popowicz system \eqref{eq1} is simplified to the famous C-H equation, which is completely integrable.
As a typical shallow water wave equation \cite{6,7}, many different properties of C-H equation are studied by using different methods and techniques.
Constantin studied its bi-Hamiltonian structure \cite{8} and obtained its exact peaked solitons of the form $ce^{-|x-ct|}$ with $c>0$ \cite{9}.
The local well-posedness for the Cauchy problem of the C-H equation in Sobolev spaces and Besov spaces was discussed in \cite{10,11,12,13}.
It was shown that there exist global strong solutions to the C-H equation and finite time blow-up strong solutions to the C-H equation \cite{10,11,14,15}.
The existence and uniqueness of global weak solutions and global conservative and dissipative solutions were also inverstigated in \cite{16,17,18,19}.
When $n=v=0$, the system \eqref{eq1} becomes the famous D-P equation by rescaling the time variable.
As another typical shallow water wave model, D-P equation has also attracted extensive attention. Its traveling wave solution and soliton solution are reported respectively \cite{20,21}. The local well-posedness, blow-up phenomena, global strong solutions and global weak solutions were studied in
\cite{22,23,24,25,26,27,28,29,30,31}. Although the D-P equation is similar to the C-H equation in several aspects, these two equations are truly different.
In fact, the D-P equation has not only peakon solutions \cite{32} and periodic peakon solutions \cite{33} but also shock peakons \cite{34} and periodic shock waves \cite{24}.

However, to the best of our knowledge, there are few studies on the related properties of system \eqref{eq1}.
Barnes and Hone studied its conservative peakons and explicit dynamics of two peakons \cite{35}.
The local well-posedness of strong solutions and blow-up phenomena in Sobolev spaces are discussed in \cite{4, 5}.
Recently, Zhou studied the local well-posedness of solutions for this system \eqref{eq1} in nonhomogeneous Besov spaces $B^s_{p,r}$ with $1\leq p, r\leq +\infty$ and $s>\max\{\frac{5}{2}, 2+\frac{1}{p}\}$,
and established the local well-posedness in critical Besov space $B^{\frac{5}{2}}_{2,1}$ by the transport equations theory and the classical Friedrichs regularization method \cite{3}.
Besides, Zhou also reported the non-uniform dependence on initial data of system \eqref{eq1} by constructing two sequences of  bounded approximate solutions in
Sobolev space $H^s(\mathbb{R})$ with $s>\frac{5}{2}$ \cite{3}.

In this paper, we aim to improve the local well-posedness for the Cauchy problem of \eqref{eq1} in Besov spaces,
present a new blow-up criterion and the global existence with different initial values for the \eqref{eq1}.
The structure of the paper is as follows: in Section 2, we introduce some preliminaries which will be used in sequel.
In Section 3, we give an inequality which plays an important role in the proof of this paper,
and prove the local well-posedness of \eqref{eq1} in $B^s_{p,r}$ with $s>\max\{2, \frac 3 2+\frac 1 p\}$ or $(s=2, 2\leq p \leq\infty, 1\leq r\leq2)$.
The main method is based on the Littlewood-Paley theory and transport equations theory.
In Section 4, we obtain a new blow-up criterion and the global existence with different initial values.

\section{Preliminaries}
 \par
In this section, we will first  recall some important properties of the Littlewood-Paley decomposition and the nonhomogeneous Besov spaces $B^s_{p,r}$,
which will be used in this paper.

\begin{prop}\cite{book}
Let $\mathcal{C}\overset{def}{=}\{\xi\in\mathbb{R}^d:\frac 3 4\leq|\xi|\leq\frac 8 3\}$ be an annulus and $\mathcal{B}\overset{def}{=}\{\xi\in\mathbb{R}^d: |\xi|\leq\frac 4 3\}$ be a ball.
There exists a couple of smooth functions ($\chi,\varphi$) valued in [0, 1], such that $\chi \in C_c^{\infty}(\mathcal{B})$ and $\varphi \in C_c^{\infty}(\mathcal{C})$. Moreover,
$$ \forall\xi\in\mathbb{R}^d,\ \chi(\xi)+\sum_{j\geq 0}\varphi(2^{-j}\xi)=1, $$
$$ \forall\xi\in\mathbb{R}^d\backslash\{0\},\ \sum_{j\in\mathbb{Z}}\varphi(2^{-j}\xi)=1, $$
and
$$\mathrm{Supp}\ \varphi(2^{-j}\cdot)\cap \mathrm{Supp}\ \varphi(2^{-j'}\cdot)=\emptyset\,\ \text{if}\,\ |j-j'|\geq 2,$$
$$\mathrm{Supp}\ \chi(\cdot)\cap \mathrm{Supp}\ \varphi(2^{-j}\cdot)=\emptyset\,\ \text{if}\,\ j \geq 1.$$
\end{prop}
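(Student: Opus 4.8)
The plan is to produce both functions from a single radial cut-off and then exploit a telescoping identity. First I would fix a smooth non-increasing function $g\colon\mathbb{R}\to[0,1]$ with $g\equiv 1$ on $(-\infty,\tfrac34]$ and $g\equiv 0$ on $[\tfrac43,\infty)$; such a $g$ is built in the standard way from the bump $t\mapsto e^{-1/t}$ (for $t>0$, extended by $0$) by forming a smooth transition function on $[0,1]$ and affinely rescaling it to the interval $[\tfrac34,\tfrac43]$. Put $\theta(\xi):=g(|\xi|)$. Then $\theta$ is valued in $[0,1]$, radially non-increasing, equals $1$ on $\{|\xi|\le\tfrac34\}$, and is supported in $\mathcal{B}=\{|\xi|\le\tfrac43\}$; it is smooth on $\mathbb{R}^d$ because $|\cdot|$ is smooth away from the origin and $\theta$ is constant (equal to $1$) near the origin.

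Next I would set $\chi:=\theta$ and $\varphi(\xi):=\theta(\xi/2)-\theta(\xi)$. Since $|\xi/2|\le|\xi|$ and $\theta$ is radially non-increasing we get $0\le\varphi\le 1$; if $|\xi|\le\tfrac34$ then $\theta(\xi/2)=\theta(\xi)=1$, so $\varphi(\xi)=0$, while if $|\xi|\ge\tfrac83$ then $\theta(\xi/2)=\theta(\xi)=0$, so again $\varphi(\xi)=0$. Hence $\mathrm{Supp}\,\varphi\subset\mathcal{C}$, and $\chi\in C_c^\infty(\mathcal{B})$, $\varphi\in C_c^\infty(\mathcal{C})$ are then clear, with both functions valued in $[0,1]$.

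The two summation identities follow from telescoping. Because $\varphi(2^{-j}\xi)=\theta(2^{-j-1}\xi)-\theta(2^{-j}\xi)$, one has for every $N\ge0$
\[
\chi(\xi)+\sum_{j=0}^{N}\varphi(2^{-j}\xi)=\theta(\xi)+\bigl(\theta(2^{-N-1}\xi)-\theta(\xi)\bigr)=\theta(2^{-N-1}\xi),
\]
and letting $N\to\infty$, continuity of $\theta$ at $0$ with $\theta(0)=1$ gives $\chi(\xi)+\sum_{j\ge0}\varphi(2^{-j}\xi)=1$ for all $\xi\in\mathbb{R}^d$. Similarly, for $M,N\ge 0$,
\[
\sum_{j=-M}^{N}\varphi(2^{-j}\xi)=\theta(2^{-N-1}\xi)-\theta(2^{M}\xi),
\]
and for $\xi\ne0$ the first term tends to $1$ while the second tends to $0$ (since $\theta$ has compact support and $2^{M}|\xi|\to\infty$), whence $\sum_{j\in\mathbb{Z}}\varphi(2^{-j}\xi)=1$ on $\mathbb{R}^d\setminus\{0\}$.

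Finally I would verify the disjointness statements by comparing radii. The support of $\varphi(2^{-j}\cdot)$ lies in the annulus $\{\tfrac34 2^{j}\le|\xi|\le\tfrac83 2^{j}\}$; when $j'\ge j+2$ we have $\tfrac83 2^{j}<3\cdot2^{j}=\tfrac34 2^{j+2}\le\tfrac34 2^{j'}$, so these annuli do not meet. Likewise $\mathrm{Supp}\,\chi\subset\{|\xi|\le\tfrac43\}$, while for $j\ge1$ the support of $\varphi(2^{-j}\cdot)$ lies in $\{|\xi|\ge\tfrac34 2^{j}\ge\tfrac32\}$, and $\tfrac43<\tfrac32$. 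The only step that is slightly delicate is arranging the radial cut-off $\theta$ to be simultaneously smooth at the origin, radially non-increasing, and with exact support $\{|\xi|\le\tfrac43\}$, but this is a routine construction; everything else reduces to the telescoping computation above.
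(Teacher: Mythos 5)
Your proof is correct: the radial cut-off $\theta$ together with the telescoping identity $\varphi(\xi)=\theta(\xi/2)-\theta(\xi)$ is exactly the standard construction behind this proposition, which the paper itself only quotes from the cited monograph without proof, so there is nothing for your argument to diverge from. One minor simplification: you never need $\theta$ to have exact support $\{|\xi|\le \tfrac43\}$ --- containment of the support in the ball is all that $\chi\in C_c^{\infty}(\mathcal{B})$ requires, and radial monotonicity plus $\theta\equiv 1$ on $\{|\xi|\le\tfrac34\}$, $\theta\equiv 0$ on $\{|\xi|\ge\tfrac43\}$ already give everything --- so the step you flag as delicate can be dispensed with.
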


Denote $\mathcal{F}$ by the Fourier transform and $\mathcal{F}^{-1}$ by its inverse.
Let $u$ be a tempered distribution in $\mathcal{S}'(\mathbb{R}^d)$. For all $j\in\mathbb{Z}$,
we can define the nonhomogeneous dyadic blocks $\Delta_j$ and nonhomogeneous low frequency cut-off
operator $S_j$ as follows:

$$ \Delta_j u=0\,\ \text{if}\,\ j\leq -2,\quad \Delta_{-1} u=\mathcal{F}^{-1}(\chi\mathcal{F}u),$$
$$\Delta_j u=\mathcal{F}^{-1}(\varphi(2^{-j}\cdot)\mathcal{F}u)\,\ \text{if}\,\ j\geq 0,$$
$$S_j u=\sum_{j'=-\infty}^{j-1}\Delta_{j'}u.$$
Then the Littlewood-Paley decomposition is given as follows:
$$ u=\sum_{j\in\mathbb{Z}}\Delta_j u \quad \text{in}\ \mathcal{S}'(\mathbb{R}^d). $$

\begin{defi}\cite{book}
Let $s\in\mathbb{R}$ and $\ 1\leq p,r\leq\infty.$ The nonhomogeneous Besov space $B^s_{p,r}(\mathbb{R}^d)$ consists of all tempered distribution $u$ such that
$$ B^s_{p,r}=B^s_{p,r}(\mathbb{R}^d)=\{u\in S'(\mathbb{R}^d):\|u\|_{B^s_{p,r}(\mathbb{R}^d)}=\Big\|(2^{js}\|\Delta_j u\|_{L^p})_j \Big\|_{l^r(\mathbb{Z})}<\infty\}. $$
\end{defi}

Now, we give some properties about Besov spaces that will be used in this paper.
\begin{prop}\cite{book}
Suppose that $s\in\mathbb{R},\ 1\leq p, p_i, r, r_i\leq\infty, i=1,2.$ We have \\
(1) Topological properties: $B^s_{p,r}$ is a Banach space which is continuously embedded in $\mathcal{S}'(\mathbb{R}^d)$. \\
(2) Density: If $1\leq p,r<\infty$, then $C_c^{\infty}$ is dense in  $B^s_{p,r}$. If $r<\infty$, then $\lim\limits_{j\rightarrow\infty}\|S_j u-u\|_{B^s_{p,r}}=0$. \\
(3) Dual properties: If $1\leq p,r<\infty$, $B^s_{p',r'}=(B^s_{p,r})'.$\\
(4) Embedding: If $1\leq p_1\leq p_2 \leq \infty $ and $1\leq r_1\leq r_2 \leq \infty$, then $ B^s_{p_1,r_1}\hookrightarrow B^{s-d(\frac 1 {p_1}-\frac 1 {p_2})}_{p_2,r_2}. $
If $s_1<s_2$, then the embedding $B^{s_2}_{p,r_2}\hookrightarrow B^{s_1}_{p,r_1}$ is locally compact. \\
(5) Algebraic properties: $B^s_{p,r}$ is an algebra $\Leftrightarrow$ $B^s_{p,r}\hookrightarrow L^{\infty} \Leftrightarrow s>\frac d p\ \text{or}\ s=\frac d p,\ r=1.\quad $ \\
(6) Fatou property: if $(u_n)_{n\in\mathbb{N}}$ is a bounded sequence in $B^s_{p,r}$, then an element $u\in B^s_{p,r}$ and a subsequence $(u_{n_k})_{k\in\mathbb{N}}$ exist such that
$$ \lim_{k\rightarrow\infty}u_{n_k}=u\ \text{in}\ \mathcal{S}'\quad \text{and}\quad \|u\|_{B^s_{p,r}}\leq C\liminf_{k\rightarrow\infty}\|u_{n_k}\|_{B^s_{p,r}}. $$
(7) Let $m\in\mathbb{R}$ and $f$ be a $S^m$-mutiplier, (that is $f: \mathbb{R}^d \rightarrow \mathbb{R}$ is a smooth function and satisfies that $\forall\alpha\in\mathbb{N}^d$, $\exists C=C(\alpha)$, such that $|\partial^{\alpha}f(\xi)|\leq C(1+|\xi|)^{m-|\alpha|},\ \forall\xi\in\mathbb{R}^d)$.
Then the operator $f(D)=\mathcal{F}^{-1}(f\mathcal{F})$ is continuous from $B^s_{p,r}$ to $B^{s-m}_{p,r}$.
\end{prop}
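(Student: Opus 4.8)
The plan is to treat Proposition~2.3 as a catalogue of standard consequences of the Littlewood--Paley calculus, the single workhorse being the Bernstein inequalities for functions with compactly supported Fourier transform together with the almost-orthogonality relations $\Delta_j\Delta_{j'}=0$ for $|j-j'|\geq 2$ and $\Delta_j S_{j'}=0$ for $j\geq j'+1$. Throughout I would fix a dyadic partition of unity $(\chi,\varphi)$ as in Proposition~2.1 and introduce a fattened block $\tilde\Delta_j$ (with symbol $\equiv 1$ near $\mathrm{Supp}\,\varphi(2^{-j}\cdot)$) so that $\Delta_j=\tilde\Delta_j\Delta_j$; then every Fourier multiplier acting on $\Delta_j u$ is a convolution against a rescaled fixed Schwartz kernel, and each estimate reduces to an $L^p$ convolution (Young) followed by a summation in $j$ against the weight $2^{js}$.

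\emph{Items (1), (2), (4).} For completeness I would first establish the continuous embedding $B^s_{p,r}\hookrightarrow\mathcal{S}'$: pairing $u=\sum_j\Delta_j u$ with a Schwartz function and using Bernstein, $|\langle u,\phi\rangle|$ is bounded by finitely many seminorms of $\phi$ times $\|u\|_{B^s_{p,r}}$. A Cauchy sequence $(u_n)$ then has an $\mathcal{S}'$-limit $u$, each $\Delta_j u_n\to\Delta_j u$ in $L^p$, and the $\ell^r(2^{js}L^p)$-bound passes to the limit, so $u\in B^s_{p,r}$ and $u_n\to u$ in norm. For (2), $u-S_ju=\sum_{j'\geq j}\Delta_{j'}u$ has Besov norm $\asymp\big(\sum_{j'\geq j-1}(2^{j's}\|\Delta_{j'}u\|_{L^p})^r\big)^{1/r}$, the tail of a convergent series when $r<\infty$, hence $S_ju\to u$; truncating the smooth function $S_ju$ by a large cutoff and mollifying then gives a $C_c^\infty$-approximation, the truncation error being controlled in $L^p$ (here $p<\infty$ is used) and the mollification error by the same tail estimate. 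Item (4) is immediate from Bernstein, $\|\Delta_j u\|_{L^{p_2}}\lesssim 2^{jd(1/p_1-1/p_2)}\|\Delta_j u\|_{L^{p_1}}$, together with $\ell^{r_1}\hookrightarrow\ell^{r_2}$; the local compactness of $B^{s_2}_{p,r_2}\hookrightarrow B^{s_1}_{p,r_1}$ for $s_1<s_2$ follows by splitting a bounded sequence into low frequencies $S_Nu_n$ (relatively compact on compact sets by Arzel\`a--Ascoli, since Bernstein controls all derivatives uniformly in $n$) and high frequencies ($o(1)$ in $B^{s_1}_{p,r_1}$ uniformly, by the gap $s_2-s_1>0$), plus a diagonal argument.

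\emph{Items (3), (5), (6), (7).} For (3) I would use the retraction/section that identifies $B^s_{p,r}$ with a complemented subspace of the weighted sequence space $\ell^r(\mathbb{Z};L^p)$ via $u\mapsto(\Delta_j u)_j$; since $(\ell^r)'=\ell^{r'}$ and $(L^p)'=L^{p'}$ when $p,r<\infty$, the pairing $\langle u,f\rangle=\sum_j\langle\Delta_j u,\tilde\Delta_j f\rangle$ exhibits the dual as the Besov space with conjugate integrability and opposite regularity $-s$. For (5), summing Bernstein gives $\|u\|_{L^\infty}\leq\sum_j\|\Delta_j u\|_{L^\infty}\lesssim\sum_j 2^{jd/p}\|\Delta_j u\|_{L^p}$, finite precisely when $s>d/p$ (H\"older in $j$) or $s=d/p,\ r=1$; the algebra property then follows from the Bony decomposition $uv=T_uv+T_vu+R(u,v)$ and the standard continuity of paraproduct and remainder once $B^s_{p,r}\hookrightarrow L^\infty$, while the converse is seen on explicit lacunary examples. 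For (6), each $\Delta_j u_n$ is band-limited and bounded in $L^p$, hence by Bernstein bounded with all derivatives in $L^\infty$ on every ball; Arzel\`a--Ascoli and a diagonal extraction over $j$ and over an exhaustion by balls yield a subsequence with $\mathcal{S}'$-limit $u$, and lower semicontinuity of $\|\cdot\|_{L^p}$ with Fatou's lemma for the $\ell^r$-sum give $\|u\|_{B^s_{p,r}}\leq C\liminf\|u_{n_k}\|_{B^s_{p,r}}$. Finally (7): since $f(D)$ commutes with $\Delta_j$, $f(D)\Delta_j u=g_j\ast\Delta_j u$ with $g_j=\mathcal{F}^{-1}(f\,\varphi(2^{-j}\cdot))$ for $j\geq 0$ (and $\chi$ in place of $\varphi$ for $j=-1$); the $S^m$-bounds on $f$ give, after rescaling $\xi\mapsto 2^j\xi$ and integrating by parts, $\|g_j\|_{L^1}\lesssim 2^{jm}$ uniformly, whence Young yields $\|\Delta_j f(D)u\|_{L^p}\lesssim 2^{jm}\|\Delta_j u\|_{L^p}$, and multiplying by $2^{j(s-m)}$ and taking the $\ell^r$-norm gives $\|f(D)u\|_{B^{s-m}_{p,r}}\lesssim\|u\|_{B^s_{p,r}}$.

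The obstacles here are organizational rather than conceptual. The delicate points are: the $C_c^\infty$-density step, where multiplication by a cutoff is not a priori bounded on $B^s_{p,r}$, so one must exploit the smoothness and integrability of the fixed function $S_ju$ rather than a black-box product estimate; the precise bookkeeping in the duality statement (the exponent must be $-s$, and the restriction $p,r<\infty$ is essential); and the Fatou property when $p=1$, where reflexivity fails and one genuinely needs the Bernstein-upgraded Arzel\`a--Ascoli compactness in place of a weak-$\ast$ argument.
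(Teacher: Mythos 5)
This proposition is stated in the paper with a citation to the textbook and no proof, and your sketch simply reproduces the standard Littlewood--Paley/Bernstein arguments of that reference (completeness and embedding into $\mathcal{S}'$ via Bernstein, density by tail estimates plus cutoff and mollification, duality via the retraction $u\mapsto(\Delta_j u)_j$ onto $\ell^r(2^{js}L^p)$, the algebra property through Bony decomposition, Fatou via band-limited Arzel\`a--Ascoli and diagonal extraction, and the $S^m$-multiplier bound $\|\mathcal{F}^{-1}(f\,\varphi(2^{-j}\cdot))\|_{L^1}\lesssim 2^{jm}$), so it is essentially the same approach as the source the paper relies on. One remark: as printed, item (3) reads $B^s_{p',r'}=(B^s_{p,r})'$, which is a typo for $B^{-s}_{p',r'}=(B^s_{p,r})'$; your duality argument, with the pairing $\langle u,f\rangle=\sum_j\langle\Delta_j u,\tilde\Delta_j f\rangle$, correctly produces the conjugate exponents with opposite regularity $-s$, i.e., the corrected statement rather than the literal one.
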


\begin{prop}\label{prop}\cite{book, he}
(1) If $s_1<s_2$, $\theta \in (0,1)$, and $(p,r)$ is in $[1,\infty]^2$, then we have
$$ \|u\|_{B^{\theta s_1+(1-\theta)s_2}_{p,r}}\leq \|u\|_{B^{s_1}_{p,r}}^{\theta}\|u\|_{B^{s_2}_{p,r}}^{1-\theta}. $$
(2) If $s\in\mathbb{R},\ 1\leq p\leq\infty,\ \varepsilon>0$, a constant $C=C(\varepsilon)$ exists such that
$$ \|u\|_{B^s_{p,1}}\leq C\|u\|_{B^s_{p,\infty}}\ln\Big(e+\frac {\|u\|_{B^{s+\varepsilon}_{p,\infty}}}{\|u\|_{B^s_{p,\infty}}}\Big). $$
\end{prop}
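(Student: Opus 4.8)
The statement collects two independent and by now classical interpolation facts about nonhomogeneous Besov spaces (see \cite{book}), and I would prove each directly from the Littlewood--Paley characterization of $B^s_{p,r}$ together with Hölder's inequality; the argument is largely routine, so the only real work is bookkeeping.

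For part (1), the starting point is the pointwise-in-$j$ identity
$$2^{j\left(\theta s_1+(1-\theta)s_2\right)}\|\Delta_j u\|_{L^p}=\left(2^{js_1}\|\Delta_j u\|_{L^p}\right)^{\theta}\left(2^{js_2}\|\Delta_j u\|_{L^p}\right)^{1-\theta},$$
which holds for every $j$ because the exponent of $2$ is an affine combination of $s_1$ and $s_2$. Setting $a_j=2^{js_1}\|\Delta_j u\|_{L^p}$ and $b_j=2^{js_2}\|\Delta_j u\|_{L^p}$, the claim reduces to the sequence estimate $\big\|(a_j^{\theta}b_j^{1-\theta})_j\big\|_{\ell^r}\le\|(a_j)_j\|_{\ell^r}^{\theta}\,\|(b_j)_j\|_{\ell^r}^{1-\theta}$. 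For $r<\infty$ this is Hölder's inequality applied to $\sum_j(a_j^r)^{\theta}(b_j^r)^{1-\theta}$ with the conjugate exponents $\tfrac1\theta$ and $\tfrac1{1-\theta}$, followed by taking an $r$-th root; for $r=\infty$ it is immediate since $a_j\le\|(a_k)_k\|_{\ell^\infty}$ and $b_j\le\|(b_k)_k\|_{\ell^\infty}$ for every $j$. Unwinding the definition of the Besov norm then gives part (1).

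For part (2), I would first discard the trivial cases $u\equiv0$ and $u\notin B^{s+\varepsilon}_{p,\infty}$, so that (using the embedding $B^{s+\varepsilon}_{p,\infty}\hookrightarrow B^s_{p,\infty}$ recorded above) the ratio $R:=\|u\|_{B^{s+\varepsilon}_{p,\infty}}/\|u\|_{B^s_{p,\infty}}$ is a finite positive number. Writing $\|u\|_{B^s_{p,1}}=\sum_{j\ge-1}2^{js}\|\Delta_j u\|_{L^p}$, I would split this sum at an integer threshold $N$ still to be chosen. The low-frequency block $-1\le j\le N$ is bounded termwise by $\|u\|_{B^s_{p,\infty}}$, contributing at most $(N+2)\|u\|_{B^s_{p,\infty}}$; for the high frequencies I would insert $2^{-j\varepsilon}2^{j\varepsilon}$ and sum the geometric series,
$$\sum_{j>N}2^{js}\|\Delta_j u\|_{L^p}=\sum_{j>N}2^{-j\varepsilon}\,2^{j(s+\varepsilon)}\|\Delta_j u\|_{L^p}\le\frac{2^{-N\varepsilon}}{2^{\varepsilon}-1}\,\|u\|_{B^{s+\varepsilon}_{p,\infty}}.$$
Choosing $N=\big\lceil\tfrac1\varepsilon\log_2(e+R)\big\rceil$, which is a positive integer since $e+R\ge e$, one has $2^{-N\varepsilon}\le(e+R)^{-1}\le R^{-1}$, so the high-frequency part is $\le C(\varepsilon)\|u\|_{B^s_{p,\infty}}$, while $N+2\le C(\varepsilon)\ln(e+R)$ because $\ln(e+R)\ge1$. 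Adding the two contributions and passing from $\log_2$ to $\ln$ yields the asserted inequality with $C=C(\varepsilon)$ (and $C(\varepsilon)\sim\varepsilon^{-1}$ as $\varepsilon\to0$).

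I do not anticipate a genuine difficulty: both estimates are textbook facts. The one point deserving care is the choice of the cut-off $N$ in part (2) --- it must be a nonnegative integer so that the dyadic sums really begin at $j=-1$, which is exactly why the term $e$ inside the logarithm is needed --- together with the harmless but necessary exclusion of the degenerate cases $u\equiv0$ and $\|u\|_{B^{s+\varepsilon}_{p,\infty}}=\infty$, under which the right-hand side is ill-defined or infinite.
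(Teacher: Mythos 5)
Your proof is correct, and it is essentially the standard argument for these two facts: the paper itself states this proposition without proof, citing the textbook reference, and the textbook proof proceeds exactly as you do --- the pointwise identity on dyadic blocks plus H\"older in $\ell^r$ for the interpolation inequality, and the split of the $\ell^1$ sum at a frequency threshold $N\sim\varepsilon^{-1}\log_2\bigl(e+\|u\|_{B^{s+\varepsilon}_{p,\infty}}/\|u\|_{B^{s}_{p,\infty}}\bigr)$ for the logarithmic estimate. Your attention to the integer cut-off starting at $j=-1$ and to the degenerate cases is the right bookkeeping; nothing further is needed.
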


\begin{lemm}\label{product}\cite{book,he}
(1) For any $s>0$ and any $(p,r)$ in $[1,\infty]^2$, the space $L^{\infty} \cap B^s_{p,r}$ is an algebra, and a constant $C=C(s,d)$ exists such that
$$ \|uv\|_{B^s_{p,r}}\leq C(\|u\|_{L^{\infty}}\|v\|_{B^s_{p,r}}+\|u\|_{B^s_{p,r}}\|v\|_{L^{\infty}}). $$
(2) If $1\leq p,r\leq \infty,\ s_1\leq s_2,\ s_2>\frac{d}{p} (s_2 \geq \frac{d}{p}\ \text{if}\ r=1)$ and $s_1+s_2>\max(0, \frac{2d}{p}-d)$, there exists $C=C(s_1,s_2,p,r,d)$ such that
$$ \|uv\|_{B^{s_1}_{p,r}}\leq C\|u\|_{B^{s_1}_{p,r}}\|v\|_{B^{s_2}_{p,r}}. $$
(3) If $1\leq p\leq 2$,  there exists $C=C(p,d)$ such that
$$ \|uv\|_{B^{\frac d p-d}_{p,\infty}}\leq C \|u\|_{B^{\frac d p-d}_{p,\infty}}\|v\|_{B^{\frac d p}_{p,1}}. $$
\end{lemm}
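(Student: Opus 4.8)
The plan is to derive all three inequalities from Bony's paraproduct decomposition
\[
uv = T_uv + T_vu + R(u,v),\qquad T_uv := \sum_{j}S_{j-1}u\,\Delta_jv,\qquad R(u,v):=\sum_{|j-j'|\le 1}\Delta_ju\,\Delta_{j'}v ,
\]
together with the two standard summation facts behind Besov calculus: if the spectra of a family $(f_j)_j$ lie in dyadic annuli $2^j\mathcal C$ then $\big\|\sum_j f_j\big\|_{B^{\sigma}_{q,\rho}}\lm\big\|(2^{j\sigma}\|f_j\|_{L^q})_j\big\|_{\ell^{\rho}}$ for every $\sigma$, whereas if they lie only in balls $2^j\mathcal B$ the same bound holds for $\sigma>d\max(0,\tfrac1q-1)$, and still holds at the threshold $\sigma=d\max(0,\tfrac1q-1)$ with target space $B^{\sigma}_{q,\infty}$. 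These follow from Proposition 2.1 and Bernstein's inequalities, themselves a consequence of the embedding in Proposition 2.3(4). Granting them, the paraproduct satisfies $\|T_uv\|_{B^{\sigma}_{q,\rho}}\lm\|u\|_{L^\infty}\|v\|_{B^{\sigma}_{q,\rho}}$ and, for $t<0$, $\|T_uv\|_{B^{\sigma+t}_{q,\rho}}\lm\|u\|_{B^{t}_{\infty,\infty}}\|v\|_{B^{\sigma}_{q,\rho}}$ (because $S_{j-1}u\,\Delta_jv$ has spectrum in an annulus $2^j\mathcal C'$, and $\|S_{j-1}u\|_{L^\infty}\lm\|u\|_{L^\infty}$, resp.\ $\lm 2^{-jt}\|u\|_{B^t_{\infty,\infty}}$ since $-t>0$), while the remainder satisfies $\|R(u,v)\|_{B^{\sigma_1+\sigma_2}_{q,\rho}}\lm\|u\|_{B^{\sigma_1}_{q_1,\rho_1}}\|v\|_{B^{\sigma_2}_{q_2,\rho_2}}$ whenever $\tfrac1q=\tfrac1{q_1}+\tfrac1{q_2}$, $\tfrac1\rho\ge\tfrac1{\rho_1}+\tfrac1{\rho_2}$ and $\sigma_1+\sigma_2>d\max(0,\tfrac1q-1)$, the threshold $\sigma_1+\sigma_2=d\max(0,\tfrac1q-1)$ remaining admissible (into $B^{\sigma_1+\sigma_2}_{q,\infty}$) as soon as $\tfrac1{\rho_1}+\tfrac1{\rho_2}\ge1$; this last fact is ball-summation applied to $\|\Delta_ju\,\Delta_{j'}v\|_{L^q}\le\|\Delta_ju\|_{L^{q_1}}\|\Delta_{j'}v\|_{L^{q_2}}$.

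With these tools, (1) is immediate: the two paraproducts are bounded by $\|u\|_{L^\infty}\|v\|_{B^s_{p,r}}$ and $\|v\|_{L^\infty}\|u\|_{B^s_{p,r}}$, and, since $\Delta_j$ is uniformly bounded on $L^\infty$ and $s>0=d\max(0,\tfrac1p-1)$, the remainder is bounded by $\|u\|_{L^\infty}\|v\|_{B^s_{p,r}}$; adding the three contributions gives the stated bilinear estimate, and hence the algebra property. For (2), $s_2>\tfrac dp$ (or $s_2\ge\tfrac dp$ if $r=1$) gives $B^{s_2}_{p,r}\hookrightarrow L^\infty$ by Proposition 2.3(5), together with $B^{s_2}_{p,r}\hookrightarrow B^{\sigma}_{p,r}$ for $\sigma\le s_2$ by Proposition 2.3(4); hence $T_vu$ is bounded by $\|v\|_{L^\infty}\|u\|_{B^{s_1}_{p,r}}\lm\|v\|_{B^{s_2}_{p,r}}\|u\|_{B^{s_1}_{p,r}}$. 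The term $T_uv$, where $\|u\|_{L^\infty}$ need not be finite, is handled by the embedding $B^{s_1}_{p,r}\hookrightarrow B^{s_1-d/p}_{\infty,\infty}$ together with the negative paraproduct estimate with $t=s_1-\tfrac dp<0$ and $\sigma=\tfrac dp\le s_2$ (a harmless $\ep$-shift treats the borderline $s_1=\tfrac dp$, and when $s_1>\tfrac dp$ one uses (1) directly). Finally $R(u,v)$ is estimated by passing through $L^{p/2}$: $\|\Delta_ju\,\Delta_{j'}v\|_{L^{p/2}}\le\|\Delta_ju\|_{L^p}\|\Delta_{j'}v\|_{L^p}$, and since $s_1+s_2>d\max(0,\tfrac2p-1)=\max(0,\tfrac{2d}p-d)$, ball-summation in $L^{p/2}$ puts $R(u,v)$ in $B^{s_1+s_2}_{p/2,r}$, which embeds into $B^{s_1+s_2-d/p}_{p,r}\hookrightarrow B^{s_1}_{p,r}$ (the last inclusion because $s_2\ge\tfrac dp$). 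Collecting the three bounds yields (2).

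Statement (3) is the endpoint of this scheme: with $s_1=\tfrac dp-d$ and $s_2=\tfrac dp$ one has $s_1+s_2=\tfrac{2d}p-d=d\max(0,\tfrac2p-1)$ for $q=\tfrac p2$ (this is where $1\le p\le2$ enters). The same three-term analysis applies — $B^{d/p}_{p,1}\hookrightarrow L^\infty$ controls $T_vu$; $B^{d/p-d}_{p,\infty}\hookrightarrow B^{-d}_{\infty,\infty}$ and the negative paraproduct estimate with $t=-d$, $\sigma=\tfrac dp$ control $T_uv$; and the threshold ball-summation in $L^{p/2}$ is applicable precisely because $\tfrac1\infty+\tfrac11\ge1$, which is exactly where the hypothesis $r=1$ on the higher-regularity factor is used, giving $R(u,v)\in B^{2d/p-d}_{p/2,\infty}\hookrightarrow B^{d/p-d}_{p,\infty}$, with $\|v\|_{B^{d/p}_{p,\infty}}\le\|v\|_{B^{d/p}_{p,1}}$ throughout. (Alternatively, (3) is dual to the algebra property $B^{d-d/p}_{p',1}\cdot B^{d-d/p}_{p',1}\subset B^{d-d/p}_{p',1}$ — a case of (2) — combined with the embedding $B^{d/p}_{p,1}\hookrightarrow B^{d-d/p}_{p',1}$ valid for $p\le2$, $\tfrac1p+\tfrac1{p'}=1$, once $B^{d/p-d}_{p,\infty}$ is identified with the dual of $B^{d-d/p}_{p',1}$.)

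The only genuinely delicate point is the remainder $R(u,v)$, above all in the low-integrability range $p<2$ and in the endpoint case (3): the regularity $s_1+s_2$ produced by the remainder must survive passage through the non-normed space $L^{p/2}$, and the precise cost of that passage is $d(\tfrac2p-1)$. This is why the hypotheses read $s_1+s_2>\max(0,\tfrac{2d}p-d)$ in (2), and $s_1+s_2=\max(0,\tfrac{2d}p-d)$ with the $r=1$ condition rescuing the borderline summation in (3); nothing sharper can be expected, since by Proposition 2.3(5) $B^{d/p}_{p,r}$ fails to be an algebra for $r>1$. The paraproduct contributions are routine once the negative-exponent continuity of $T$ is available.
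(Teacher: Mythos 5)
The paper never proves Lemma 2.5 at all: it is quoted from the references \cite{book,he}, so there is no internal proof to compare with; your Bony-decomposition argument is the standard route and is consistent with how the cited sources proceed (and with Propositions 2.6--2.7, which are exactly your paraproduct and remainder inputs). Parts (1) and (2) of your argument are essentially correct. The only caveat in (2) is that for $1\le p<2$ your remainder estimate passes through $B^{\sigma}_{p/2,r}$ with $p/2<1$, i.e.\ through quasi-Banach Besov spaces, which is legitimate but lies outside the stated Propositions 2.1--2.7; the way to stay inside them is to use H\"older with $\tfrac1p+\tfrac1{p'}=1$, embed $v$ into $B^{s_2-d(\frac1p-\frac1{p'})}_{p',r}$, land $R(u,v)$ in a $B^{\cdot}_{1,r}$ space and embed back — precisely the device the paper itself uses in (3.5) of its Lemma 3.2.

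The genuine gap is in your primary argument for (3). Your threshold ball-summation claim — spectra in balls, $\sigma=d\max(0,\tfrac1q-1)$, conclusion in $B^{\sigma}_{q,\infty}$ ``as soon as $\tfrac1{\rho_1}+\tfrac1{\rho_2}\ge1$'' — is correct for $q\ge1$ (it is the second half of Proposition 2.7), but fails as stated for $q<1$, which is exactly the case you invoke when $1\le p<2$ and $q=p/2$. For $q<1$ the $q$-triangle inequality gives $\|\Delta_k\sum_j f_j\|_{L^q}^q\le\sum_{j\ge k-N_0}\|\Delta_k f_j\|_{L^q}^q$ together with $\|\Delta_k f_j\|_{L^q}\lesssim 2^{(j-k)d(1/q-1)}\|f_j\|_{L^q}$, so at the threshold one needs $(2^{j\sigma}\|f_j\|_{L^q})_j\in\ell^q$, which is strictly stronger than the $\ell^\infty\cdot\ell^1=\ell^1$ summability furnished by $u\in B^{d/p-d}_{p,\infty}$ and $v\in B^{d/p}_{p,1}$. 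Hence the sentence ``the threshold ball-summation in $L^{p/2}$ is applicable precisely because $\tfrac1\infty+\tfrac11\ge1$'' does not go through. The standard repair keeps $q\ge 1$: bound $\|\Delta_j u\,\Delta_{j'}v\|_{L^1}\le\|\Delta_j u\|_{L^p}\|\Delta_{j'}v\|_{L^{p'}}$ after embedding $B^{d/p}_{p,1}\hookrightarrow B^{d/p'}_{p',1}$ (valid since $p\le2$), apply the threshold case of Proposition 2.7 with $s_1+s_2=0$, $p=1$, $r_1=\infty$, $r_2=1$ to get $R(u,v)\in B^{0}_{1,\infty}$, then embed $B^{0}_{1,\infty}\hookrightarrow B^{d/p-d}_{p,\infty}$. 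Your parenthetical duality argument (algebra property of $B^{d/p'}_{p',1}$ plus the identification of $B^{d/p-d}_{p,\infty}$ with its dual) is also a legitimate way to obtain (3); so the lemma is recoverable, but as written the remainder step in (3) is unjustified.
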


Secondly, some properties of nonhomogeneous Bony decomposition that need to be used in this paper are introduced.

Let $u$ and $v$ be tempered distribution in $\mathcal{S}'(\mathbb{R}^d)$. Then we have
$$u=\sum_{j'}\Delta_{j'}u, \quad v=\sum_{j}\Delta_{j}v \quad and \quad uv=\sum_{j',j}\Delta_{j'}u\Delta_{j}v. $$ The nonhomogeneous paraproduct of $v$ by $u$ is defined as follows:
$$T_{u}v=\sum_{j}S_{j-1}u\Delta_{j}v.$$
The nonhomogeneous remainder of $u$ and $v$ is defined by
$$R(u,v)=\sum_{|j'-j|\leq 1}\Delta_{j'}u\Delta_{j}v.$$
At least formally, the operators $T$ and $R$ are bilinear, and we have the following Bony decomposition:
$$uv=T_{u}v+T_{v}u+R(u,v).$$

We now state our main result concerning continuity of the nonhomogeneous paraproduct operator $T$ and remainder operator $R$.
\begin{prop}\cite{book}
There exists a constant C such that for any real number s and any $1\leq p, r \leq +\infty$, we have, for any $(u, v)$ in $L^{\infty}\times B_{p,r}^{s}$,
$$\|T_{u}v\|_{B_{p,r}^{s}}\leq C^{1+|s|}\|u\|_{L^{\infty}}\|v\|_{B_{p,r}^{s}}.$$
Moreover, for any $(s,t)$ in $ \mathbb{R} \times (-\infty, 0)$ and any $1\leq p, r_1, r_2 \leq \infty$, we have, for any $(u,v)\in B_{\infty,r_1}^{t}\times B_{p,r_2}^{s}$,
$$\|T_{u}v\|_{B_{p,r}^{s+t}}\leq \frac{C^{1+|s+t|}}{-t}\|u\|_{B_{\infty, r_{1}}^{t}}\|v\|_{B^{s}_{p, r_{2}}}\,\ \text{with}\,\
\frac{1}{r}=min\{1,\frac{1}{r_{1}}+\frac{1}{r_{2}}\}.$$
\end{prop}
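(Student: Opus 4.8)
The plan is to derive both inequalities from the elementary spectral localization of the building blocks $S_{j-1}u\,\Delta_j v$ together with discrete Young and Hölder inequalities on the sequences of Littlewood--Paley norms. First I would record the support information supplied by Proposition~2.1: $\mathcal F(S_{j-1}u)$ is supported in the ball $\{|\xi|\le\tfrac23 2^j\}$ and $\mathcal F(\Delta_j v)$ in the annulus $\{\tfrac34 2^j\le|\xi|\le\tfrac83 2^j\}$, so that $S_{j-1}u\,\Delta_j v$ has spectrum in a fixed dilated annulus $2^j\widetilde{\mathcal C}$. Consequently there is an integer $N_0$, independent of all parameters, with $\Delta_k(S_{j-1}u\,\Delta_j v)=0$ whenever $|k-j|>N_0$, and hence $\Delta_k(T_uv)=\sum_{|j-k|\le N_0}\Delta_k(S_{j-1}u\,\Delta_j v)$ is a locally finite sum.

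For the first inequality I would estimate, uniformly in $k$,
\[
\|\Delta_k(T_uv)\|_{L^p}\lm \sum_{|j-k|\le N_0}\|S_{j-1}u\|_{L^\infty}\,\|\Delta_j v\|_{L^p}\lm \|u\|_{L^\infty}\sum_{|j-k|\le N_0}\|\Delta_j v\|_{L^p},
\]
using that $\Delta_k$ is bounded on $L^p$ uniformly in $k$ and that $S_{j-1}u$ is the convolution of $u$ with an $L^1$-normalised kernel, so $\|S_{j-1}u\|_{L^\infty}\lm\|u\|_{L^\infty}$. Multiplying by $2^{ks}$, writing $2^{ks}=2^{(k-j)s}2^{js}$ with $|k-j|\le N_0$, taking the $\ell^r(\mathbb Z)$-norm in $k$ and shifting the summation index then yields $\|T_uv\|_{B^s_{p,r}}\lm (2N_0+1)2^{N_0|s|}\|u\|_{L^\infty}\|v\|_{B^s_{p,r}}$, and the prefactor is absorbed into $C^{1+|s|}$ by enlarging $C$.

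For the second inequality the key refinement is to retain the $\ell^{r_1}$-summability carried by the dyadic blocks of $u$ instead of bounding $\|S_{j-1}u\|_{L^\infty}$ crudely. Writing $S_{j-1}u=\sum_{j'\le j-2}\Delta_{j'}u$ and exploiting $t<0$, the sequence $\ell\mapsto 2^{\ell t}$ is summable on $\ell\ge2$, so the discrete Young inequality shows that $\alpha_j:=2^{jt}\|S_{j-1}u\|_{L^\infty}$ defines a sequence in $\ell^{r_1}$ with $\|(\alpha_j)\|_{\ell^{r_1}}\le \frac{2^{2t}}{1-2^t}\|u\|_{B^t_{\infty,r_1}}\le\frac{1}{1-2^t}\|u\|_{B^t_{\infty,r_1}}$. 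Inserting this and $\|\Delta_j v\|_{L^p}=2^{-js}b_j$ with $(b_j)\in\ell^{r_2}$ into the locally finite sum, multiplying by $2^{k(s+t)}$ and shifting indices as before, reduces matters to taking the $\ell^r$-norm of the pointwise product $(\alpha_j b_j)_j$; Hölder for sequences gives $\|(\alpha_j b_j)\|_{\ell^{\tilde r}}\le\|(\alpha_j)\|_{\ell^{r_1}}\|(b_j)\|_{\ell^{r_2}}$ with $\frac1{\tilde r}=\frac1{r_1}+\frac1{r_2}$, and when $\tilde r<1$ a further embedding $\ell^{\tilde r}\hookrightarrow\ell^1$ is invoked, so the bound holds with exactly $\frac1r=\min\{1,\frac1{r_1}+\frac1{r_2}\}$. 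Collecting constants yields $\|T_uv\|_{B^{s+t}_{p,r}}\lm \frac{(2N_0+1)2^{N_0|s+t|}}{1-2^t}\|u\|_{B^t_{\infty,r_1}}\|v\|_{B^s_{p,r_2}}$.

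The last step, and the only genuinely delicate one, is to check that $\frac{(2N_0+1)2^{N_0|s+t|}}{1-2^t}\le \frac{C^{1+|s+t|}}{-t}$ uniformly over $t<0$ after enlarging $C$: near $t=0^-$ one has $1-2^t\sim(-t)\ln2$, so $\frac{-t}{1-2^t}$ stays bounded while $2^{N_0|s+t|}$ is controlled; as $t\to-\infty$, $\frac1{1-2^t}\to1$ and $(-t)\,2^{N_0|s+t|}$ grows only like $(-t)\,2^{N_0|t|}$, which is dominated by $C^{|t|}$ once $C>2^{N_0}$. Apart from this constant bookkeeping --- in particular the care needed to reach the sharp index $\frac1r=\min\{1,\frac1{r_1}+\frac1{r_2}\}$ rather than merely $r=r_2$ --- every step is a routine Littlewood--Paley manipulation, and I expect the extraction of the precise constant $\tfrac{C^{1+|s+t|}}{-t}$ and the sharp summability exponent to be the main points requiring attention.
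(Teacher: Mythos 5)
The paper itself gives no proof of this proposition -- it is quoted verbatim from the cited book (Bahouri--Chemin--Danchin, Thm.~2.82) -- so the comparison is with the standard proof, and your argument is exactly that standard proof: spectral localization of $S_{j-1}u\,\Delta_jv$ in dyadic annuli, the resulting quasi-orthogonality $\Delta_k(T_uv)=\sum_{|j-k|\le N_0}\Delta_k(S_{j-1}u\,\Delta_jv)$, the uniform bound $\|S_{j-1}u\|_{L^\infty}\lesssim\|u\|_{L^\infty}$ for the first inequality, and discrete Young plus H\"older (with the embedding $\ell^{\tilde r}\hookrightarrow\ell^1$ when $\tfrac1{r_1}+\tfrac1{r_2}>1$, giving the sharp $\tfrac1r=\min\{1,\tfrac1{r_1}+\tfrac1{r_2}\}$) for the second. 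All of that is correct.

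There is, however, a genuine flaw in your final constant bookkeeping, precisely at the step you flagged as delicate. You correctly obtain $\|(\alpha_j)\|_{\ell^{r_1}}\le\frac{2^{2t}}{1-2^t}\|u\|_{B^t_{\infty,r_1}}$ but then discard the factor $2^{2t}$, keeping only $\frac{1}{1-2^t}$. The resulting claim, that $\frac{(2N_0+1)\,2^{N_0|s+t|}}{1-2^t}\le\frac{C^{1+|s+t|}}{-t}$ uniformly in $t<0$, is false: take $s+t=0$ (e.g.\ $s=-t$ large), so the left-hand side is bounded below by $2N_0+1$ while the right-hand side is $C/(-t)\to0$ as $t\to-\infty$; your justification "$(-t)2^{N_0|s+t|}$ grows like $(-t)2^{N_0|t|}$, dominated by $C^{|t|}$" silently conflates $|s+t|$ with $|t|$, which fails exactly in this regime. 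The repair is immediate and is why the factor $1/(-t)$ appears in the statement: keep $\sum_{l\ge2}2^{lt}=\frac{2^{2t}}{1-2^t}$ and observe that this quantity is $\le C/(-t)$ uniformly on $t<0$ (for $-1\le t<0$ one has $1-2^t\ge c(-t)$, while for $t\le-1$ the exponential decay $2^{2t}$ dominates $1/(-t)$). With the factor $2^{2t}$ retained, the full prefactor $(2N_0+1)\,2^{N_0|s+t|}\,\frac{2^{2t}}{1-2^t}$ is indeed $\le\frac{C^{1+|s+t|}}{-t}$ once $C$ is chosen larger than $2^{N_0}$, and the rest of your proof stands as written.
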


\begin{prop}\cite{book}
A constant C exists which satisfies the following inequalities.
Let $(s_1,s_2)$ be in $\mathbb{R}$ and $1\leq p, p_1, p_2, r, r_1, r_2\leq\infty$. Assume that
$$\frac{1}{p}=\frac{1}{p_{1}}+\frac{1}{p_{2}}\leq 1 \,\ \text{and}\,\ \frac{1}{r}=\frac{1}{r_{1}}+\frac{1}{r_{2}}\leq 1.$$
If $s_1+s_2>0$, then we have, for any (u,v) in $B_{p_1,r_1}^{s_1}\times B_{p_2,r_2}^{s_2},$
 $$\|R(u,v)\|_{B_{p,r}^{s_1+s_2}}\leq \frac{C^{1+|s_1+s_2|}}{s_1+s_2}\|u\|_{B_{p_1, r_{1}}^{s_1}}\|v\|_{B^{s_2}_{p_2, r_{2}}}.$$
If $r=1$ and $s_1+s_2=0,$ we have, for any (u, v) in $B_{p_1,r_1}^{s_1}\times B_{p_2,r_2}^{s_2},$,
$$\|R(u,v)\|_{B_{p,\infty}^{0}}\leq C^{1+|s_1+s_2|}\|u\|_{B_{p_1, r_{1}}^{s_1}}\|v\|_{B^{s_2}_{p_2, r_{2}}}.$$
\end{prop}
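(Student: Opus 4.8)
The plan is to reduce the estimate to the classical summation principle for sequences of functions whose Fourier transforms are supported in \emph{balls} rather than in annuli. Using the slightly fattened blocks $\widetilde{\Delta}_j:=\Delta_{j-1}+\Delta_j+\Delta_{j+1}$, I would first regroup the double sum defining $R$ according to the frequency of $v$, writing
$$R(u,v)=\sum_{j\in\mathbb{Z}}R_j,\qquad R_j:=(\widetilde{\Delta}_j u)(\Delta_j v).$$
By the spectral localisation of the Littlewood--Paley blocks recalled in Section~2, $\widehat{\Delta_{j+\nu}u}$ is supported in $2^{j+\nu}\mathcal{C}$ for $|\nu|\le1$ and $\widehat{\Delta_j v}$ in $2^j\mathcal{C}$; hence $\widehat{R_j}$ is supported in a dilated ball $2^j\mathcal{B}'$, where $\mathcal{B}'$ is a fixed ball independent of $j$. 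This ball localisation (as opposed to annulus localisation) is exactly what forces the hypothesis $s_1+s_2>0$: when the reconstruction $\sum_j R_j$ is hit by a dyadic block $\Delta_k$, every index $j\gtrsim k$ contributes, and the series converges in $B^{s_1+s_2}_{p,r}$ only through a geometric series of ratio $2^{-(s_1+s_2)}$, which is also the origin of the factor $1/(s_1+s_2)$. At the endpoint $s_1+s_2=0$ this geometric series degenerates and must be replaced by an $\ell^1$ summation, which is why one then needs $r=1$ and only reaches the larger space $B^0_{p,\infty}$.

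The argument then rests on two standard ingredients. The first is the summation lemma for ball-localised pieces (see \cite{book}): if $(f_j)_j$ satisfies $\mathrm{Supp}\,\widehat{f_j}\subset 2^j\mathcal{B}'$ and $\sigma>0$, then $\sum_j f_j\in B^\sigma_{p,r}$ with
$$\Big\|\sum_j f_j\Big\|_{B^\sigma_{p,r}}\le\frac{C^{1+\sigma}}{\sigma}\Big\|\big(2^{j\sigma}\|f_j\|_{L^p}\big)_j\Big\|_{\ell^r},$$
whereas for $\sigma=0$ and $r=1$ one has $\|\sum_j f_j\|_{B^0_{p,\infty}}\le C\|(\|f_j\|_{L^p})_j\|_{\ell^1}$; both are proved by Bernstein's inequality together with the estimate $\sum_{j\ge k}2^{-(j-k)\sigma}\le(1-2^{-\sigma})^{-1}$. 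The second is Hölder's inequality in $x$, which under $\frac1p=\frac1{p_1}+\frac1{p_2}\le1$ gives $\|R_j\|_{L^p}\le\|\widetilde{\Delta}_j u\|_{L^{p_1}}\|\Delta_j v\|_{L^{p_2}}$.

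Putting the pieces together for $\sigma:=s_1+s_2>0$, I would estimate, by the discrete Hölder inequality in $j$ with $\frac1r=\frac1{r_1}+\frac1{r_2}$ (this is where the hypothesis $\frac1{r_1}+\frac1{r_2}\le1$ enters),
$$\Big\|\big(2^{j\sigma}\|R_j\|_{L^p}\big)_j\Big\|_{\ell^r}\le\Big\|\big(2^{js_1}\|\widetilde{\Delta}_j u\|_{L^{p_1}}\big)_j\Big\|_{\ell^{r_1}}\,\Big\|\big(2^{js_2}\|\Delta_j v\|_{L^{p_2}}\big)_j\Big\|_{\ell^{r_2}}.$$
The second factor on the right is precisely $\|v\|_{B^{s_2}_{p_2,r_2}}$, and the first is controlled by $\|u\|_{B^{s_1}_{p_1,r_1}}$ after expanding $\widetilde{\Delta}_j u=\sum_{|\nu|\le1}\Delta_{j+\nu}u$ and performing a harmless shift of the summation index, the $O(1)$ shift being absorbed into the multiplicative constant. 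Feeding this into the summation lemma gives the first inequality; the endpoint case $s_1+s_2=0$, $r=1$ is obtained identically, using the $\ell^1$ version of the summation lemma and the $\frac1{r_1}+\frac1{r_2}=1$ case of discrete Hölder. I expect the only genuinely non-routine step — hence the main obstacle — to be the proof of the ball-localised summation lemma with the asserted dependence of the constant on $\sigma=s_1+s_2$ (in particular the blow-up as $\sigma\to0^+$ and the switch to $r=1$ at $\sigma=0$); everything else is Hölder in $x$ and in $j$ plus elementary manipulations of dyadic sums.
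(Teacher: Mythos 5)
Your proposal is correct and follows essentially the same route as the proof in the cited reference (the paper itself states this proposition without proof, quoting it from \cite{book}): regroup $R(u,v)=\sum_j(\widetilde{\Delta}_j u)(\Delta_j v)$, observe that each piece is spectrally supported in a ball $2^j\mathcal{B}'$, apply H\"older in $x$ and in $j$, and conclude with the standard summation lemma for ball-localised series, whose $\sigma>0$ hypothesis and $1/\sigma$ constant explain both the condition $s_1+s_2>0$ and the endpoint case $s_1+s_2=0$, $r=1$ landing only in $B^0_{p,\infty}$. No gaps beyond the routine verification of that summation lemma, which is exactly where the cited text places the work as well.
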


Finally, we state some useful results in the transport equation theory, which are crucial to the proofs of our main theorem later.
\begin{equation}\label{transport}
\left\{\begin{array}{l}
    f_t+v\cdot\nabla f=g,\ x\in\mathbb{R}^d,\ t>0, \\
    f(0,x)=f_0(x).
\end{array}\right.
\end{equation}

\begin{lemm}\label{priori estimate}\cite{book,li2}
Let $s\in\mathbb{R},\ 1\leq p,r\leq\infty$. For all solutions $f\in L^{\infty}([0,T];B^s_{p,r})$ of \eqref{transport} in one dimension with initial data $f_0$ in $B^s_{p,r}$, and $g$ in $L^1([0,T];B^s_{p,r})$,
there exists a constant $C$ such that if $t\in[0,T]$, then we have
$$ \|f(t)\|_{B^s_{p,r}}\leq e^{CV(t)}\Big(\|f_0\|_{B^s_{p,r}}+\int_0^t e^{-CV(t')}\|g(t')\|_{B^s_{p,r}}dt'\Big) $$
with $V(t)=\int_0^tV'(s)ds$ and
\begin{equation*}
  V'(t)=\left\{\begin{array}{ll}
  \|\nabla v\|_{B^{s+1}_{p,r}},\ &\text{if}\ s>\max(-\frac 1 2,\frac 1 {p}-1), \\
  \|\nabla v\|_{B^{s}_{p,r}},\ &\text{if}\ s>\frac 1 {p}\ \text{or}\ (s=\frac 1 {p},\ p<\infty, \ r=1),\\
  \|\nabla v\|_{B^{\frac 1 p}_{p,1}}, \ &\text{if}\ s=\frac 1 p-1,\ 1\leq p\leq 2,\ r=\infty.
  \end{array}\right.
\end{equation*}
\end{lemm}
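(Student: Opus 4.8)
The statement to be proved is the classical a priori estimate for the transport equation \eqref{transport} in Besov spaces, so my plan is to follow the standard Littlewood–Paley approach: localize the equation in frequency, derive an estimate for each dyadic block, and then reassemble via the $\ell^r$ norm, controlling the commutator terms by the Besov regularity of the drift $v$. First I would apply the dyadic operator $\Delta_j$ to \eqref{transport} to obtain
\begin{equation*}
\partial_t \Delta_j f + v\cdot\nabla \Delta_j f = \Delta_j g + R_j, \qquad R_j := v\cdot\nabla \Delta_j f - \Delta_j(v\cdot\nabla f).
\end{equation*}
Then multiply by $|\Delta_j f|^{p-2}\Delta_j f$, integrate in $x$, and use the fact that $\int (v\cdot\nabla \Delta_j f)\,|\Delta_j f|^{p-2}\Delta_j f\,dx = -\tfrac1p\int (\mathrm{div}\,v)\,|\Delta_j f|^p\,dx$ to get, after dividing by $\|\Delta_j f\|_{L^p}^{p-1}$,
\begin{equation*}
\frac{d}{dt}\|\Delta_j f\|_{L^p} \le \tfrac1p\|\mathrm{div}\,v\|_{L^\infty}\|\Delta_j f\|_{L^p} + \|\Delta_j g\|_{L^p} + \|R_j\|_{L^p}.
\end{equation*}
The cases $p=\infty$ or $p=1$ need a slightly different (but routine) justification — for $p=\infty$ one evaluates at the point realizing the sup, for general $p$ one uses the Friedrichs mollification argument to legitimize the computation; I would just cite the standard references for this.

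The heart of the matter is the commutator estimate: I need
\begin{equation*}
\Bigl\| \bigl( 2^{js}\|R_j\|_{L^p} \bigr)_j \Bigr\|_{\ell^r} \le C\, V'(t)\, \|f\|_{B^s_{p,r}},
\end{equation*}
with $V'(t)$ exactly the quantity appearing in the three cases of the statement. This is where the hypotheses on $s$ and $p$ enter. I would write $R_j$ using the Bony decomposition of the product $v\cdot\nabla f$ and split it into the three pieces coming from $[\Delta_j, T_v]\nabla f$, $T_{\nabla \Delta_j f} v$ (up to telescoping with $\Delta_j T_{\nabla f}v$), and the remainder term $\Delta_j R(v,\nabla f) - \ldots$. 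Each piece is handled with the paraproduct and remainder continuity estimates (the two propositions on $T$ and $R$ stated just above in the excerpt) together with the standard support and Bernstein properties of the $\Delta_j$. The three regularity regimes for $s$ correspond precisely to which of those propositions applies with which indices: the generic case $s > \max(-\tfrac12,\tfrac1p-1)$ gains one derivative from the paraproduct structure, hence $V'(t)=\|\nabla v\|_{B^{s+1}_{p,r}}$; the case $s>\tfrac1p$ (or the endpoint with $r=1$) uses that $B^s_{p,r}$ is an algebra embedded in $L^\infty$, giving $V'(t)=\|\nabla v\|_{B^s_{p,r}}$; and the delicate endpoint $s=\tfrac1p-1$, $1\le p\le2$, $r=\infty$ uses Lemma~\ref{product}(3) on the low-regularity product $B^{\frac dp-d}_{p,\infty}\cdot B^{\frac dp}_{p,1}$, which forces $V'(t)=\|\nabla v\|_{B^{1/p}_{p,1}}$.

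Once the commutator bound is in hand, I would collect the dyadic estimates: multiplying by $2^{js}$, taking the $\ell^r$ norm, and using the triangle inequality in $\ell^r$ gives
\begin{equation*}
\frac{d}{dt}\|f(t)\|_{B^s_{p,r}} \le C V'(t)\, \|f(t)\|_{B^s_{p,r}} + \|g(t)\|_{B^s_{p,r}},
\end{equation*}
at least in an integrated/weak sense, since $f$ is only assumed in $L^\infty_T B^s_{p,r}$ rather than continuous in time (so strictly speaking one works with $\int_0^t$ of the dyadic inequalities and sums, or regularizes first). Gronwall's lemma applied to $t\mapsto e^{-CV(t)}\|f(t)\|_{B^s_{p,r}}$ then yields exactly
\begin{equation*}
\|f(t)\|_{B^s_{p,r}} \le e^{CV(t)}\Bigl(\|f_0\|_{B^s_{p,r}} + \int_0^t e^{-CV(t')}\|g(t')\|_{B^s_{p,r}}\,dt'\Bigr),
\end{equation*}
which is the claim. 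The main obstacle is unquestionably the commutator estimate in the third, endpoint regime $s=\tfrac1p-1$ with $r=\infty$: the remainder term $R(v,\nabla f)$ sits exactly at the borderline where the continuity estimate for $R$ fails in the usual form, and one must use the refined product law of Lemma~\ref{product}(3) and carefully track that the resulting bound involves the $B^{1/p}_{p,1}$ norm of $\nabla v$ rather than a weaker norm — every other step is bookkeeping with the Littlewood–Paley machinery. Since all of this is classical, I would present the reduction to the commutator estimate in detail and then refer to \cite{book,li2} for the case analysis, or reproduce it compactly if space permits.
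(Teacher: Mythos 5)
Your outline is correct: this lemma is quoted in the paper from \cite{book,li2} without proof, and your sketch reproduces the standard argument of those references — localize with $\Delta_j$, estimate each block in $L^p$ using the transport structure and the commutator $R_j=[v\cdot\nabla,\Delta_j]f$, bound $\bigl(2^{js}\|R_j\|_{L^p}\bigr)_j$ in $\ell^r$ via Bony decomposition and the paraproduct/remainder continuity results (with the three regimes for $s$ giving the three choices of $V'$), then sum and apply Gronwall in integral form. Deferring the case-by-case commutator analysis to the cited sources is appropriate here, since the paper itself treats the lemma as a known result.
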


\begin{lemm}\label{new estimate}\cite{book}
Let $s>0,\ 1\leq r\leq\infty,\ 1\leq p\leq p_1\leq\infty$ and $\frac{1}{p_2}=\frac{1}{p}-\frac{1}{p_1}$. Define $R_j=[v\cdot\nabla, \Delta_j]f$. There exists a constant $C$ such that
$$\Big\|(2^{js}\|R_j\|_{L^p})_j\Big\|_{l^r(\mathbb{Z})}\leq C(\|\nabla v\|_{L^{\infty}}\|f\|_{B^s_{p,r}}+\|\nabla v\|_{B^{s-1}_{p_1,r}}\|\nabla f\|_{L^{p_2}}).$$
Hence, if $f$ solves the equation \eqref{transport}, we have
$$\|f(t)\|_{B^s_{p,r}}\leq \|f_0\|_{B^s_{p,r}}+C\int_0 ^t (\|\nabla v\|_{L^{\infty}}\|f\|_{B^s_{p,r}}+\|\nabla v\|_{B^{s-1}_{p_1,r}}\|\nabla f\|_{L^{p_2}}+\|g\|_{B^s_{p,r}}dt').$$
\end{lemm}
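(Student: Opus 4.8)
The plan is to establish the commutator estimate first and then deduce the transport inequality from it by a Littlewood--Paley localized $L^p$ energy argument. For the commutator estimate, observe that each $\partial_k$ is a Fourier multiplier and hence commutes with $\Delta_j$, so that
$$R_j=[v\cdot\nabla,\Delta_j]f=\sum_{k}[v^k,\Delta_j](\partial_k f).$$
Since the sum over $k$ is finite it suffices to bound $[v^k,\Delta_j]g$ with $g:=\partial_k f$. Applying Bony's decomposition to $v^kg$ and to $v^k\Delta_jg$, composing the first with $\Delta_j$ and subtracting, gives the classical splitting
$$[v^k,\Delta_j]g=\underbrace{\big(T_{v^k}\Delta_jg-\Delta_jT_{v^k}g\big)}_{\mathrm{I}_j}+\underbrace{\big(T_{\Delta_jg}v^k-\Delta_jT_{g}v^k\big)}_{\mathrm{II}_j}+\underbrace{\big(R(v^k,\Delta_jg)-\Delta_jR(v^k,g)\big)}_{\mathrm{III}_j},$$
and it remains to estimate the three pieces.

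The term $\mathrm{I}_j$ is the genuine commutator: spectral localization reduces it to $\mathrm{I}_j=\sum_{|j'-j|\le N_0}[S_{j'-1}v^k,\Delta_j]\Delta_{j'}g$ for a fixed $N_0$, and writing $\Delta_j$ as convolution against $2^{jd}h(2^j\cdot)$ with $h\in\mathcal S$, the mean value theorem applied to $S_{j'-1}v^k$ gives $\|[S_{j'-1}v^k,\Delta_j]\Delta_{j'}g\|_{L^p}\lesssim 2^{-j}\|S_{j'-1}\nabla v^k\|_{L^\infty}\|\Delta_{j'}g\|_{L^p}\lesssim 2^{-j}\|\nabla v\|_{L^\infty}\,2^{j'}\|\Delta_{j'}f\|_{L^p}$ by Bernstein. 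Since $|j'-j|\le N_0$, the factor $2^{j'-j}$ is harmless, and taking the $2^{js}$-weighted $\ell^r(\mathbb Z)$-norm yields $\big\|(2^{js}\|\mathrm{I}_j\|_{L^p})_j\big\|_{\ell^r}\lesssim\|\nabla v\|_{L^\infty}\|f\|_{B^s_{p,r}}$, the first term on the right-hand side.

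For $\mathrm{II}_j$ and $\mathrm{III}_j$ the strategy is to bound each summand by H\"older in the form $L^{p_2}\cdot L^{p_1}\hookrightarrow L^p$ --- every $g$-factor that occurs is obtained from $g=\nabla f$ by an $S$- or $\Delta$-operator, hence has $L^{p_2}$-norm $\lesssim\|\nabla f\|_{L^{p_2}}$, while the accompanying block $\Delta_{j'}v^k$ is placed in $L^{p_1}$ --- and then to carry out the $2^{js}$-summation via Young's inequality for sequences. The structure of the low-frequency cutoffs forces the $v^k$-blocks occurring in $\mathrm{II}_j=T_{\Delta_jg}v^k-\Delta_jT_gv^k$ to have index $j'\ge 1$, so Bernstein replaces $\|\Delta_{j'}v^k\|_{L^{p_1}}$ by $2^{-j'}\|\Delta_{j'}\nabla v^k\|_{L^{p_1}}$; the resulting geometric tails (e.g. $\sum_{j'\ge j+2}2^{(j-j')s}$) and finite-width sums converge precisely because $s>0$, which gives $\big\|(2^{js}\|\mathrm{II}_j\|_{L^p})_j\big\|_{\ell^r}\lesssim\|\nabla v\|_{B^{s-1}_{p_1,r}}\|\nabla f\|_{L^{p_2}}$. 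The same scheme applies to $\mathrm{III}_j$ for the part built from $(\mathrm{Id}-\Delta_{-1})v^k$ (the positivity of the sum of regularities needed for the remainder again reduces to $s>0$), while the finitely many pieces involving the band-limited function $\Delta_{-1}v^k$ reassemble --- since a band-limited factor produces no paraproduct of the type $T_g\Delta_{-1}v^k$ --- into commutators of $\Delta_j$ with multiplication by $\Delta_{-1}v^k$ that are nonzero only for $j=O(1)$ and are bounded by $O\!\big(\|\Delta_{-1}\nabla v^k\|_{L^{p_1}}\|\nabla f\|_{L^{p_2}}\big)$, hence absorbed. Adding the three contributions proves the commutator inequality; arranging these Bony pieces so that $v$ enters solely through $\nabla v$ while tracking which frequency blocks survive is the main obstacle.

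For the transport inequality, apply $\Delta_j$ to \eqref{transport} to obtain $\partial_t\Delta_jf+v\cdot\nabla\Delta_jf=\Delta_jg+R_j$ with $R_j=[v\cdot\nabla,\Delta_j]f$. When $p<\infty$, multiplying by $|\Delta_jf|^{p-2}\Delta_jf$ and integrating in $x$ turns the transport term into $-\tfrac1p\int(\mathrm{div}\,v)\,|\Delta_jf|^p\,dx$, bounded by $\|\nabla v\|_{L^\infty}\|\Delta_jf\|_{L^p}^p$, so that $\tfrac{d}{dt}\|\Delta_jf\|_{L^p}\le C\|\nabla v\|_{L^\infty}\|\Delta_jf\|_{L^p}+\|\Delta_jg\|_{L^p}+\|R_j\|_{L^p}$ (the case $p=\infty$ follows by the usual limiting argument). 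Integrating in time, multiplying by $2^{js}$, taking the $\ell^r(\mathbb Z)$-norm while using Minkowski's inequality to bring it inside $\int_0^t$, and inserting the commutator estimate just established yields the claimed bound.
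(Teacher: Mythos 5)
Your argument is correct: the reduction $R_j=\sum_k[v^k,\Delta_j]\partial_kf$, the Bony splitting into the paraproduct commutator (handled by the kernel/mean-value estimate giving the $\|\nabla v\|_{L^\infty}\|f\|_{B^s_{p,r}}$ term), the H\"older $L^{p_1}\cdot L^{p_2}$ treatment of the remaining paraproduct and remainder pieces with Bernstein trading $v$-blocks for $\nabla v$-blocks and $s>0$ ensuring summability, followed by the localized $L^p$ energy estimate and Minkowski for the transport inequality, is precisely the standard proof of this commutator lemma. The paper itself gives no proof, quoting the result from its reference [book] (Bahouri--Chemin--Danchin), and your reconstruction matches that source's approach, so there is nothing to flag.
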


\begin{lemm}\label{existence}\cite{book}
Let $1\leq p\leq p_1\leq\infty,\ 1\leq r\leq\infty,\ s> -d\min(\frac 1 {p_1}, \frac 1 {p'})$. Let $f_0\in B^s_{p,r}$, $g\in L^1([0,T];B^s_{p,r})$, and let $v$ be a time-dependent vector field such that $v\in L^\rho([0,T];B^{-M}_{\infty,\infty})$ for some $\rho>1$ and $M>0$, and
$$
  \begin{array}{ll}
    \nabla v\in L^1([0,T];B^{\frac d {p_1}}_{p_1,\infty}), &\ \text{if}\ s<1+\frac d {p_1}, \\
    \nabla v\in L^1([0,T];B^{s-1}_{p,r}), &\ \text{if}\ s>1+\frac d {p_1}\ or\ (s=1+\frac d {p_1}\ and\ r=1).
  \end{array}
$$
Then the equation \eqref{transport} has a unique solution $f$ in \\
-the space $C([0,T];B^s_{p,r})$, if $r<\infty$, \\
-the space $\Big(\bigcap_{s'<s}C([0,T];B^{s'}_{p,\infty})\Big)\bigcap C_w([0,T];B^s_{p,\infty})$, if $r=\infty$.
\end{lemm}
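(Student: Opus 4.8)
The plan is to follow the classical two-step scheme for linear transport equations in Besov spaces: first prove \emph{uniqueness} by an energy estimate carried out in a function space of slightly lower regularity than $B^s_{p,r}$, and then prove \emph{existence} by regularizing the data $f_0$, the source $g$ and the drift $v$, solving the resulting smooth problems along characteristics, propagating uniform bounds by means of the a priori estimates of Lemma~\ref{priori estimate} and Lemma~\ref{new estimate}, and passing to the limit by a compactness argument. The time-continuity assertions are then recovered from the a priori estimate together with a density argument.

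For uniqueness, I would take two solutions $f_1,f_2$ in the stated class sharing the same $f_0$ and $g$; their difference $\delta f=f_1-f_2$ solves \eqref{transport} with zero data and zero right-hand side. Since the hypotheses on $\nabla v$ supply exactly the integrability required by Lemma~\ref{priori estimate} in whichever of its three regimes applies (after, if necessary, lowering the regularity index by one derivative so that $\delta f$ lies in a space where the estimate is valid), applying that lemma to $\delta f$ gives $\|\delta f(t)\|\le e^{CV(t)}\|\delta f(0)\|=0$ on $[0,T]$, hence $f_1\equiv f_2$.

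For existence, I would set $f_0^n=S_n f_0$, $g^n=S_n g$, $v^n=S_n v$ (or mollified analogues), all smooth in the space variable; since $v^n$ is Lipschitz, its flow is globally defined, so the $n$-th problem has a unique smooth solution $f^n$ obtained by transporting $f_0^n$ and the Duhamel term along the characteristics. Because the $S_n$ are uniformly bounded on the Besov spaces containing $f_0$, $g$ and $\nabla v$, Lemma~\ref{priori estimate} together with Lemma~\ref{new estimate} (the latter being the tool in the regime $s<1+\tfrac d{p_1}$, where $\nabla v$ enjoys only the weaker integrability and one must argue through the commutator $[v\cdot\nabla,\Delta_j]f$) yields a bound for $(f^n)$ in $L^\infty([0,T];B^s_{p,r})$ independent of $n$. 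Reading the equation, $\partial_t f^n=g^n-v^n\cdot\nabla f^n$ is then bounded in $L^1([0,T];B^{\sigma}_{p,r})$ for some $\sigma<s$, so $(f^n)$ is equicontinuous in time with values in such a space. Combining the uniform $B^s_{p,r}$ bound, this time-equicontinuity, the local compactness of the embedding $B^{s}_{p,r}\hookrightarrow B^{s'}_{p,\infty}$ for $s'<s$, and the Fatou property, a diagonal Arzel\`a--Ascoli argument produces a subsequence converging in $C([0,T];B^{s'}_{p,\infty})$ locally in space to a limit $f\in L^\infty([0,T];B^s_{p,r})$. This convergence is enough to pass to the limit in every term of the equation --- in particular in $v^n\cdot\nabla f^n$, which is controlled through the Bony decomposition and the paraproduct and remainder estimates stated above --- so $f$ solves \eqref{transport}. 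To upgrade $L^\infty_t$ to $C_t$ when $r<\infty$, I would observe that for smooth data continuity is automatic, then approximate $(f_0,g)$ in $B^s_{p,r}\times L^1([0,T];B^s_{p,r})$ by smooth data and apply Lemma~\ref{priori estimate} to differences of the corresponding solutions, obtaining convergence in $C([0,T];B^s_{p,r})$; when $r=\infty$ the same construction gives continuity into $B^{s'}_{p,\infty}$ for all $s'<s$ and weak-$*$ continuity into $B^s_{p,\infty}$.

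The main obstacle, and the source of the precise lower bound $s>-d\min(\tfrac1{p_1},\tfrac1{p'})$, is the very weak regularity allowed for the drift: with $v$ only in $L^\rho([0,T];B^{-M}_{\infty,\infty})$ and, in the low-regularity regime, $\nabla v$ merely in $L^1([0,T];B^{d/p_1}_{p_1,\infty})$, the product $v\cdot\nabla f$ is not classical and must be simultaneously \emph{given a meaning} and \emph{bounded uniformly in $n$} via Bony decomposition --- it is precisely the summability of the remainder $R(v,\nabla f)$ that fixes the admissible range of $s$ --- and then shown to converge under the weak convergence that compactness supplies. A secondary technical nuisance is the borderline case $s=1+\tfrac d{p_1}$, $r=1$, where the plain transport estimate is not available and one is forced to use the commutator estimate of Lemma~\ref{new estimate} with no slack left in the indices.
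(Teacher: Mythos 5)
This lemma is quoted in the paper without proof (it is taken verbatim from the cited monograph of Bahouri--Chemin--Danchin), so there is no in-paper argument to compare against; your sketch reproduces the standard proof given in that reference --- uniqueness from the a priori transport estimate, existence by smoothing the data and the velocity field, solving along characteristics, propagating uniform $B^s_{p,r}$ bounds via the transport and commutator estimates (Lemmas \ref{priori estimate} and \ref{new estimate}), passing to the limit by compactness, and recovering time continuity by approximation --- and is essentially the same approach. The only point to watch is the parenthetical ``lowering the regularity index by one'' in your uniqueness step: this is unnecessary and could drop you below the admissible threshold $s>-d\min(\frac 1 {p_1},\frac 1 {p'})$; the estimate should be applied to the difference at the level $s$ (or in the endpoint space covered by the third regime of Lemma \ref{priori estimate}) directly.
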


\begin{lemm}\label{continuity}\cite{li}
Let $1\leq p\leq\infty,\ 1\leq r<\infty,\ s>1+\frac d p\ (or \ s=1+\frac d p,\ 1\leq p<\infty,\ r=1)$. Denote $\bar{\mathbb{N}}=\mathbb{N}\cup\{\infty\}$.
Let $(v^n)_{n\in\bar{\mathbb{N}}}$ be a sequence of functions belonging to $C([0,T];B^{s-1}_{p,r})$. Assume that $(f^n)_{n\in\bar{\mathbb{N}}}$ in $C([0,T];B^{s-1}_{p,r})$ is the solution to
\begin{equation}
\left\{\begin{array}{l}
    f^n_t+v^n\cdot\nabla f^n=g,\ x\in\mathbb{R}^d,\ t>0, \\
    f^n(0,x)=f_0(x)
\end{array}\right.
\end{equation}
with initial data $f_0\in B^{s-1}_{p,r},\ g\in L^1([0,T];B^{s-1}_{p,r})$ and that for some $\alpha\in L^1([0,T])$, $\sup\limits_{n\in\bar{\mathbb{N}}}\|v^n(t)\|_{B^{s}_{p,r}}\leq \alpha(t)$.
If $v^n \rightarrow v^{\infty}$ in $L^1([0,T];B^{s-1}_{p,r})$, then $f^n \rightarrow f^{\infty}$ in $C([0,T];B^{s-1}_{p,r})$.
\end{lemm}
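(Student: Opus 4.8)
The plan is to pass to the equation for the difference $w^n:=f^n-f^\infty$ ($n\in\mathbb N$) and to close a single transport estimate for it. Subtracting the two equations and regrouping the transport term around the coefficient $v^n$, one finds that $w^n$ solves
\begin{equation*}
\partial_t w^n+v^n\cdot\nabla w^n=(v^\infty-v^n)\cdot\nabla f^\infty,\qquad w^n(0,x)=0,
\end{equation*}
so it is enough to prove $\|w^n\|_{C([0,T];B^{s-1}_{p,r})}\to 0$. Since $s-1>\frac dp$ (or $s-1=\frac dp$ with $r=1$), Lemma \ref{priori estimate} applies in $B^{s-1}_{p,r}$ with $V'(t)=\|\nabla v^n(t)\|_{B^{s-1}_{p,r}}\lesssim\|v^n(t)\|_{B^{s}_{p,r}}\le\alpha(t)$, whence $V(t)\le C\|\alpha\|_{L^1([0,T])}$ uniformly in $n\in\bar{\mathbb N}$; the same lemma applied to $f^n$ and $f^\infty$ separately furnishes a uniform bound $\sup_{n\in\bar{\mathbb N}}\|f^n\|_{C([0,T];B^{s-1}_{p,r})}\le M$ depending only on $\|f_0\|_{B^{s-1}_{p,r}}$, $\|g\|_{L^1([0,T];B^{s-1}_{p,r})}$ and $\|\alpha\|_{L^1}$. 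Feeding the zero initial datum of $w^n$ into Lemma \ref{priori estimate} then yields
\begin{equation*}
\|w^n(t)\|_{B^{s-1}_{p,r}}\le e^{C\|\alpha\|_{L^1}}\int_0^t\big\|(v^\infty-v^n)(\tau)\cdot\nabla f^\infty(\tau)\big\|_{B^{s-1}_{p,r}}\,d\tau,
\end{equation*}
so the whole matter reduces to estimating the source term in $L^1([0,T];B^{s-1}_{p,r})$.

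For that I would invoke the product estimate of Lemma \ref{product}(1), legitimate because $s-1>0$: with $u:=v^\infty-v^n$ and $z:=\nabla f^\infty$,
\begin{equation*}
\|u\cdot z\|_{B^{s-1}_{p,r}}\le C\big(\|u\|_{L^\infty}\|z\|_{B^{s-1}_{p,r}}+\|u\|_{B^{s-1}_{p,r}}\|z\|_{L^\infty}\big).
\end{equation*}
Since $B^{s-1}_{p,r}\hookrightarrow L^\infty$, both $\|u\|_{L^\infty}$ and $\|\nabla f^\infty\|_{L^\infty}$ are dominated by the corresponding Besov norms, $\|\nabla f^\infty\|_{B^{s-1}_{p,r}}\lesssim\|f^\infty\|_{B^{s}_{p,r}}$, and $\|f^\infty\|_{B^{s-1}_{p,r}}\lesssim\|f^\infty\|_{B^{s}_{p,r}}$; hence the source is bounded by $C\|v^\infty-v^n\|_{B^{s-1}_{p,r}}\|f^\infty\|_{B^{s}_{p,r}}$. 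In the applications of this lemma $f^\infty$ is moreover the solution produced by the higher-regularity well-posedness theory, so $\|f^\infty\|_{L^\infty([0,T];B^{s}_{p,r})}<\infty$, and therefore
\begin{equation*}
\int_0^T\big\|(v^\infty-v^n)\cdot\nabla f^\infty\big\|_{B^{s-1}_{p,r}}\,d\tau\ \lesssim\ \|f^\infty\|_{L^\infty_T B^{s}_{p,r}}\,\|v^\infty-v^n\|_{L^1([0,T];B^{s-1}_{p,r})}\ \longrightarrow\ 0,
\end{equation*}
which, combined with the first paragraph, gives $f^n\to f^\infty$ in $C([0,T];B^{s-1}_{p,r})$.

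The step I expect to be the genuine obstacle is precisely this control of the source, because $\nabla f^\infty$ lies a priori only in $B^{s-2}_{p,r}$, while the coefficients converge merely in the weaker $B^{s-1}_{p,r}$–topology (they are only \emph{bounded} in $B^{s}_{p,r}$): there is a one–derivative mismatch to absorb, and the argument above succeeds only by exploiting simultaneously the convergence of $v^n$ in $B^{s-1}_{p,r}$, its boundedness in $B^{s}_{p,r}$, and the extra $B^{s}_{p,r}$–regularity of the particular $f^\infty$ at hand. Should one wish to argue with the bare hypotheses (so that $f^\infty$ need not belong to $B^{s}_{p,r}$), I would bridge the gap in two steps. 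First, rerun the estimate in $B^{s-2}_{p,r}$, where the product law (Lemma \ref{product}) gives $\|(v^\infty-v^n)\cdot\nabla f^\infty\|_{B^{s-2}_{p,r}}\lesssim\|v^\infty-v^n\|_{B^{s-1}_{p,r}}\|f^\infty\|_{B^{s-1}_{p,r}}$ and hence $\|w^n\|_{C_TB^{s-2}_{p,r}}\lesssim M\,\|v^\infty-v^n\|_{L^1_TB^{s-1}_{p,r}}\to0$. Second, upgrade from $B^{s-2}_{p,r}$ to $B^{s-1}_{p,r}$ by splitting $f^\infty=S_Nf^\infty+(I-S_N)f^\infty$: the low modes $S_Nf^\infty$ produce a source in $B^{s-1}_{p,r}$ tending to $0$ for each fixed $N$ as $n\to\infty$; the two remainder/paraproduct pieces generated by the high modes $(I-S_N)f^\infty$ are $O\!\big(\|(I-S_N)\nabla f^\infty\|_{B^{s-2}_{p,r}}\big)$ in $B^{s-1}_{p,r}$ uniformly in $n$, hence negligible as $N\to\infty$ since $r<\infty$; and the single remaining paraproduct piece $T_{v^\infty-v^n}\nabla(I-S_N)f^\infty$ is only $O(\|v^\infty-v^n\|_{L^\infty})$ in $B^{s-2}_{p,r}$ and is reabsorbed through the $B^{s-2}_{p,r}$–convergence already obtained together with the uniform $B^{s-1}_{p,r}$–bound, interpolating with Proposition \ref{prop}.
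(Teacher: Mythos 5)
You have correctly reduced matters to $w^n=f^n-f^\infty$, which solves a transport equation with velocity $v^n$, zero data and source $(v^\infty-v^n)\cdot\nabla f^\infty$, and the uniform exponential factor coming from $\alpha$ is fine. But your main line proves a weaker statement than the lemma: it needs $f^\infty\in L^\infty([0,T];B^{s}_{p,r})$, whereas the hypotheses only give $f^\infty\in C([0,T];B^{s-1}_{p,r})$, so $\nabla f^\infty\in B^{s-2}_{p,r}$ and the source cannot be measured in $B^{s-1}_{p,r}$ (the extra regularity does hold in this paper's application in Theorem \ref{theorem34}, but not for the lemma as stated). You acknowledge this, but the proposed repair does not close. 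In the Bony splitting of $(v^\infty-v^n)\cdot\nabla\big((I-S_N)f^\infty\big)$ the piece $T_{v^\infty-v^n}\nabla (I-S_N)f^\infty$ is irreducibly of regularity $s-2$: a paraproduct never inherits smoothness from its low-frequency factor, so no choice of $N$ puts this part of the source into $B^{s-1}_{p,r}$. Your plan to ``reabsorb'' it through the $B^{s-2}_{p,r}$-convergence together with the uniform $B^{s-1}_{p,r}$ bound and Proposition \ref{prop} only yields convergence in $B^{\sigma}_{p,r}$ for every $\sigma<s-1$, never in the endpoint space $B^{s-1}_{p,r}$ itself --- and convergence in that top norm is the entire content of the lemma. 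Moreover the preliminary $B^{s-2}_{p,r}$ step is itself not available on the whole stated range: the product law of Lemma \ref{product}(2) with $(s_1,s_2)=(s-2,s-1)$ requires $2s-3>\max(0,\tfrac{2d}{p}-d)$, and the transport estimate at level $s-2$ requires $s-2>-d\min(\tfrac1p,\tfrac1{p'})$; both fail, e.g., for $d=1$, $p>2$ and $s$ close to $1+\tfrac1p$.

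The missing idea --- and the argument in the reference \cite{li} from which the paper quotes this lemma without proof --- is to mollify the data and the force rather than $f^\infty$ inside the source. Let $f^{n,j}$ solve $\partial_t f^{n,j}+v^n\cdot\nabla f^{n,j}=S_jg$ with $f^{n,j}(0)=S_jf_0$, for $n\in\bar{\mathbb{N}}$. By the a priori estimate of Lemma \ref{priori estimate}, uniformly in $n$, $\|f^n-f^{n,j}\|_{C([0,T];B^{s-1}_{p,r})}\leq C\big(\|(I-S_j)f_0\|_{B^{s-1}_{p,r}}+\|(I-S_j)g\|_{L^1([0,T];B^{s-1}_{p,r})}\big)\rightarrow 0$ as $j\rightarrow\infty$ (this is where $r<\infty$ enters). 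For fixed $j$, the auxiliary limit solution $f^{\infty,j}$ does belong to $C([0,T];B^{s}_{p,r})$ because its data and force are smoothed, so your first-paragraph computation applies verbatim to $f^{n,j}-f^{\infty,j}$ and gives convergence to $0$ in $C([0,T];B^{s-1}_{p,r})$ as $n\rightarrow\infty$; the triangle inequality $\|f^n-f^\infty\|\leq\|f^n-f^{n,j}\|+\|f^{n,j}-f^{\infty,j}\|+\|f^{\infty,j}-f^\infty\|$ then finishes the proof. In short, the smoothing must be done at the level of $(f_0,g)$, producing approximations that solve the same transport equations, not at the level of $f^\infty$ inside the commutator-type source.
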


\section{Local well-posedness}
\par
In this section, we will establish local well-posedness of \eqref{eq1} in Besov spaces.
We firstly provide the framework in which we shall reformulate system \eqref{eq1}. Note that if $p(x):=\frac{1}{2}e^{|x|}, x\in\mathbb{R}$,
then $(1-\partial_x^2)^{-1}f=p*f$ for all $f\in L^2(\mathbb{R})$ and $(p*m, p*n)=(u, v)$. Then, system \eqref{eq1} can be rewritten as follows:
\begin{equation}\label{eq31}
  \left\{
   \begin{array}{l}
   u_{t}+(2u+v)u_x=p*\big(-3(2u_x+v_x)u+v_xu_{xx}-v_{xx}u_{x}\big)\overset{\Delta}{=}F,\\
   v_{t}+(2u+v)v_x=p*\big(-2(2u_x+v_x)v-2v_xu_{xx}-v_{xx}v_{x}\big)\overset{\Delta}{=}H, \\
   u(0,x)=u_0(x),v(0,x)=v_{0}(x).
   \end{array}
  \right.
\end{equation}

In order to prove the local well-posedness of the strong solution of system \ref{eq31}, we give the definition of space $E^s_{p,r}(T)$ and a lemma which plays a key role in the proof.
\begin{defi}
Let $T>0,\ s\in\mathbb{R},$ and $1\leq p,r \leq\infty.$ Set
\begin{equation*}
    E^s_{p,r}(T)\triangleq \left\{\begin{array}{ll}
    C([0,T];B^s_{p,r})\cap C^1([0,T];B^{s-1}_{p,r}), & \text{if}\ r<\infty,  \\
    C_w([0,T];B^s_{p,\infty})\cap C^{0,1}([0,T];B^{s-1}_{p,\infty}), & \text{if}\ r=\infty.
    \end{array}\right.
\end{equation*}
\end{defi}

\begin{lemm}\label{lemma}
Let $s>max\{2,\frac{1}{p}+\frac{3}{2}\}$ (or $s=2,2\leq p \leq \infty, 1\leq r\leq 2$), for any $u, v \in \mathcal{S}'(\mathbb{R}^d)$, there exists a constant $C$ such that
$$
\|uv\|_{B^{s-3}_{p,r}}\leq C\|u\|_{B^{s-2}_{p,r}} \|v\|_{B^{s-2}_{p,r}}.
$$
\end{lemm}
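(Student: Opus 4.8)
The plan is to prove the estimate $\|uv\|_{B^{s-3}_{p,r}} \le C\|u\|_{B^{s-2}_{p,r}}\|v\|_{B^{s-2}_{p,r}}$ via the Bony decomposition $uv = T_u v + T_v u + R(u,v)$, estimating each of the three pieces separately using Propositions~2.7 and~2.8. The two paraproduct terms are symmetric, so it suffices to bound $T_u v$ and $R(u,v)$. Throughout, the dimension is $d=1$, so statements like $B^{\sigma}_{p,1}\hookrightarrow L^\infty$ require $\sigma > 1/p$ (or $\sigma = 1/p$, $r=1$).

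For the paraproduct term $T_u v$, I would use the second estimate in Proposition~2.7 with $t = s-2-\sigma$ for a suitable $\sigma$, writing $s-3 = s-2 + t$: one needs $t < 0$, i.e. to place $u$ in $B^{t}_{\infty,\infty}$ with $t<0$. By the embedding $B^{s-2}_{p,r}\hookrightarrow B^{t}_{\infty,\infty}$ (which holds once $s-2-\frac1p \ge t$, by part~(4) of Proposition~2.3), and under the standing hypothesis $s > \frac1p + \frac32$ we have $s - 3 - \frac1p > -\frac32$, so we may indeed choose $t \in (s-3-\frac1p,\, 0)$ and land with $T_u v \in B^{s-3}_{p,r}$; this bounds $\|T_u v\|_{B^{s-3}_{p,r}} \lesssim \|u\|_{B^{s-2}_{p,r}}\|v\|_{B^{s-2}_{p,r}}$, and symmetrically for $T_v u$. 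In the endpoint case $s=2$, $2\le p\le\infty$, $r\le 2$: here $s-3 = -1$ and $s-2 = 0$, and $-1 < 0$ while $0 \ge \frac1p$ fails in general — but since $p\ge 2$ gives $\frac1p \le \frac12$, the embedding $B^0_{p,r}\hookrightarrow B^{-1}_{\infty,\infty}$ needs $-\frac1p \ge -1$, which holds, so $T_u v$ again lands correctly with $t=-1$.

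For the remainder term $R(u,v)$, I would apply Proposition~2.8 with $p_1 = p_2 = p$, $r_1 = r_2 = r$, exponents $s_1 = s_2 = s-2$, and target regularity $s_1 + s_2 = 2s - 4$; then $R(u,v) \in B^{2s-4}_{p/ ?,\cdot}$, but one must be careful about the Hölder constraint $\frac1p = \frac1{p_1}+\frac1{p_2}$, which forces us instead to use $p_1 = p_2 = 2p$ together with the embedding $B^{s-2}_{p,r}\hookrightarrow B^{s-2-\frac1{2p}}_{2p,r}$ (losing $\frac1{2p}$ of regularity), or alternatively to first embed $u,v$ into $L^\infty$ and use part~(1) of Lemma~2.4. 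The cleanest route: since $s - 2 > \frac1p$ (which is exactly $s > \frac1p + 2$, implied by $s > \frac1p + \frac32$ only in part — so here I would instead use $s - 2 > \frac1p - \frac12 \ge 0$ coming from $p\ge$ relevant range) one checks $s_1 + s_2 = 2s-4 > 0$ precisely when $s > 2$, and at $s=2$ the endpoint clause of Proposition~2.8 ($r=1$, $s_1+s_2=0$) or the $r\le 2$ refinement handles it; then the embedding $B^{2s-4}_{p,r}\hookrightarrow B^{s-3}_{p,r}$ requires $2s-4 \ge s-3$, i.e. $s \ge 1$, which is automatic. Combining, $\|R(u,v)\|_{B^{s-3}_{p,r}} \lesssim \|u\|_{B^{s-2}_{p,r}}\|v\|_{B^{s-2}_{p,r}}$.

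The main obstacle I anticipate is bookkeeping the exponent constraints simultaneously — in particular making the Hölder relation $\frac1p = \frac1{p_1}+\frac1{p_2}$ compatible with keeping $p_1=p_2$ and not losing too much regularity in the remainder estimate, and separately verifying that the endpoint case $(s=2,\ 2\le p\le\infty,\ 1\le r\le2)$ still closes (this is where the restrictions $p\ge 2$ and $r\le 2$ become essential, since they guarantee the relevant embeddings $B^0_{p,r}\hookrightarrow B^{-1}_{\infty,\infty}$ and the summability needed for the $s_1+s_2=0$ remainder bound). Once the three pieces are assembled, the triangle inequality in $B^{s-3}_{p,r}$ finishes the proof.
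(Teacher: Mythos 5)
Your paraproduct estimates are essentially the paper's: both $T_u v$ and $T_v u$ are handled by the second inequality of Proposition~2.7 with the low-frequency factor pushed into a negative-index space $B^{t}_{\infty,\infty}$ via the embedding $B^{s-2}_{p,r}\hookrightarrow B^{s-2-\frac1p}_{\infty,\infty}$ (the paper takes $t=-\frac1p$ and $t=-1$; your admissible range for $t$ is stated slightly wrong, since the constraints are $-1\le t<0$ and $t\le s-2-\frac1p$, but a valid choice exists, so this part is only cosmetically off). The genuine gap is in the remainder term. Your ``cleanest route'' $p_1=p_2=p$, $s_1=s_2=s-2$ in Proposition~2.8 forces the output integrability $\frac1{p_1}+\frac1{p_2}=\frac2p$, which is admissible only for $p\ge 2$; the lemma, however, allows all $1\le p\le\infty$ when $s>\frac1p+\frac32$, and for $1\le p<2$ you never close the argument. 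Your two fallbacks both fail there: (i) taking $p_1=p_2=2p$ costs $\frac1{2p}$ of regularity on each factor, so positivity becomes $2s-4-\frac1p>0$, i.e.\ $s>2+\frac1{2p}$, which is not implied by the hypothesis (e.g.\ $p=\frac32$, $s=2.2$ satisfies $s>\frac1p+\frac32$ but gives $s_1+s_2<0$); (ii) Lemma~2.4(1) is unusable because the target index $s-3$ is negative and because $B^{s-2}_{p,r}\not\hookrightarrow L^{\infty}$ in general (indeed $s-2$ may be below $\frac1p$, e.g.\ $p=1$, $s=2.6$, and certainly at the endpoint $s=2$). Also note that even in your $p\ge2$ route the final embedding is from $B^{2s-4}_{p/2,r}$, not $B^{2s-4}_{p,r}$, so it costs $\frac1p$ and the correct requirement is $2s-4-\frac1p\ge s-3$ (true, but it must be checked), not merely $2s-4\ge s-3$.

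The paper closes the missing case $1\le p\le 2$ differently: it estimates $R(u,v)$ in an $L^1$-based space, pairing $u\in B^{s-2}_{p,r}$ with $v\in B^{\frac12-\frac1p}_{p',\infty}$ (conjugate exponents, $\frac1p+\frac1{p'}=1$), so Proposition~2.8 gives $R(u,v)\in B^{s-\frac32-\frac1p}_{1,r}$ with positivity $s-\frac32-\frac1p>0$ being exactly the hypothesis, and then embeds $B^{s-\frac32-\frac1p}_{1,r}\hookrightarrow B^{s-2-\frac1p}_{1,r}\hookrightarrow B^{s-3}_{p,r}$ and $B^{s-2}_{p,\infty}\hookrightarrow B^{\frac12-\frac1p}_{p',\infty}$. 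You would need to add this (or an equivalent device, e.g.\ putting one factor in $B^{\sigma}_{\infty,\infty}$ with suitable negative $\sigma$) to cover $p<2$. For the endpoint $s=2$, $2\le p\le\infty$, $1\le r\le 2$, your intuition about why $r\le 2$ matters is right, but the step must actually be carried out as in the paper: take $r_1=r$, $r_2=r'$ so that $\frac1{r_1}+\frac1{r_2}=1$ in the $s_1+s_2=0$ clause of Proposition~2.8, and use $B^{0}_{p,r}\hookrightarrow B^{0}_{p,r'}$, which is precisely where $r\le 2$ enters.
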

Proof: The proof of this Lemma is divided into two parts according to the value of $s$. By virtue of bony's decomposition, we have
$$uv=T_{u}v+T_{v}u+R(u,v).$$
Firstly, we prove the case of $s>max\{2,\frac{1}{p}+\frac{3}{2}\}$.
According to Proposition 2.3 and 2.6, since $B_{p, r}^s \hookrightarrow B_{p, r_1}^s (r\leq r_1)$, we have
\begin{equation}\label{eq32}
\|T_{u}v\|_{B^{s-3}_{p,r}}\leq C\|u\|_{B^{-\frac{1}{p}}_{\infty,\infty}}\|v\|_{B^{s-3+\frac{1}{p}}_{p,r}}\leq C\|u\|_{B^{0}_{p,\infty}}\|v\|_{B^{s-2}_{p,r}}\leq C\|u\|_{B^{s-2}_{p,r}}\|v\|_{B^{s-2}_{p,r}},
\end{equation}
and
\begin{equation}\label{eq33}
\|T_{v}u\|_{B^{s-3}_{p,r}}\leq C\|v\|_{B^{-1}_{\infty,\infty}}\|u\|_{B^{s-2}_{p,r}}\leq C\|v\|_{B^{\frac{1}{p}-1}_{p,\infty}}\|v\|_{B^{s-2}_{p,r}}\leq C\|u\|_{B^{s-2}_{p,r}}\|v\|_{B^{s-2}_{p,r}}.
\end{equation}
When $2\leq p\leq\infty$, according to Propositions 2.3 and 2.7, we get
\begin{equation}\label{eq34}
\|R(u,v)\|_{B^{s-3}_{p,r}}\leq C\|R(u,v)\|_{B^{s-3+\frac{1}{p}}_{\frac{p}{2},r}}\leq C\|R(u,v)\|_{B^{2s-4}_{\frac{p}{2},r}}\leq C\|u\|_{B^{s-2}_{p,r}}\|v\|_{B^{s-2}_{p,r}}.
\end{equation}
When $1\leq p\leq 2$, combining Propositions 2.3 and 2.7, we have
\begin{equation}\label{eq35}
\begin{split}
\|R(u,v)\|_{B^{s-3}_{p,r}}&\leq C\|R(u,v)\|_{B^{s-2-\frac{1}{p}}_{1,r}}\leq C\|R(u,v)\|_{B^{s-\frac{3}{2}-\frac{1}{p}}_{1,r}}\leq C\|u\|_{B^{s-2}_{p,r}}\|v\|_{B^{\frac{1}{2}-\frac{1}{p}}_{p',\infty}}
\\
&\leq C\|u\|_{B^{s-2}_{p,r}}\|v\|_{B^{\frac{1}{2}-\frac{1}{p'}}_{p,\infty}}\leq C\|u\|_{B^{s-2}_{p,r}}\|v\|_{B^{s-2}_{p,r}}.
\end{split}
\end{equation}
Combining inequalities \eqref{eq32}, \eqref{eq33}, \eqref{eq34}and \eqref{eq35} yields
\begin{equation}\label{eq36}
\|uv\|_{B^{s-3}_{p,r}}\leq C\|u\|_{B^{s-2}_{p,r}} \|v\|_{B^{s-2}_{p,r}}.
\end{equation}

Secondly, when $s=2,2\leq p \leq \infty, 1\leq r\leq 2$, combining Propositions 2.3 and 2.6, since $B_{\infty, \infty}^{-1} \hookrightarrow  L^{\infty}$, we can obtain
\begin{equation}\label{eq37}
  \begin{array}{l}
\|T_{u}v\|_{B^{s-3}_{p,r}}\leq C\|u\|_{B^{-1}_{\infty,\infty}}\|v\|_{B^{s-2}_{p,r}}\leq C\|u\|_{B^{\frac{1}{p}-1}_{p,r}}\|v\|_{B^{s-2}_{p,r}}\leq C\|u\|_{B^{s-2}_{p,r}}\|v\|_{B^{s-2}_{p,r}},
\\
\|T_{v}u\|_{B^{s-3}_{p,r}}\leq C\|v\|_{L^{\infty}}\|u\|_{B^{s-3}_{p,r}}\leq C\|v\|_{B^{-1}_{\infty,\infty}}\|u\|_{B^{s-2}_{p,r}}\leq C\|v\|_{B^{\frac{1}{p}-1}_{p,r}}\|u\|_{B^{s-2}_{p,r}}\leq \|v\|_{B^{s-2}_{p,r}}\|u\|_{B^{s-2}_{p,r}}.
  \end{array}
\end{equation}
According to Propositions 2.3 and 2.7, since $l^r \hookrightarrow l^{r'}$ (where $r\leq r', 1=\frac{1}{r}+\frac{1}{r'}$), we have
\begin{equation}\label{eq38}
  \begin{array}{l}
  \|R(u,v)\|_{B^{s-3}_{p,r}}\leq C\|R(u,v)\|_{B^{s-3+\frac{1}{p}}_{\frac{p}{2},\infty}}\leq C\|R(u,v)\|_{B^{2s-4}_{\frac{p}{2},\infty}}\leq C\|u\|_{B^{s-2}_{p,r}}\|v\|_{B^{s-2}_{p,r'}}
  \leq C\|u\|_{B^{s-2}_{p,r}}\|v\|_{B^{s-2}_{p,r}}.
  \end{array}
\end{equation}
Combining inequalities \eqref{eq36}, \eqref{eq37} and \eqref{eq38} completes the proof.

\begin{theo}\label{theorem}
Let $1\leq p, r\leq\infty, s\in \mathbb{R}$, and $(s, p, r)$ satisfy the condition $s> \max\{2, \frac{1}{p}+\frac{3}{2}\}$ or $(s=2, 2\leq p \leq \infty, 1\leq r\leq 2)$. Assumed that the initial value $(u_{0}, v_{0})\in B^{s}_{p,r}\times B^{s}_{p,r}$, then there exists a time $T>0$ such that \eqref{eq31} has a unique solution $(u, v)\in E^{s}_{p,r}(T)\times E^{s}_{p,r}(T)$.
\end{theo}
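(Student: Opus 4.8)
The plan is to follow the classical Friedrichs regularization / iterative scheme combined with the transport-equation estimates of Section 2, exactly as in the standard proof of local well-posedness for Camassa--Holm type equations, but using Lemma~\ref{lemma} to control the quadratic source terms $F$ and $H$ in the regularity class $B^s_{p,r}$ we are aiming for. The solution will be constructed in five steps: (i) construction of approximate solutions, (ii) uniform bounds, (iii) convergence in a lower-order norm, (iv) regularity of the limit, and (v) uniqueness and continuity of the data-to-solution map.

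\textbf{Step 1: Approximate solutions.} Starting from $(u^0,v^0)=(u_0,v_0)$ (or $(S_0u_0,S_0v_0)$), I would inductively define $(u^{n+1},v^{n+1})$ as the solution of the linear transport system
\begin{equation*}
\left\{\begin{array}{l}
\partial_t u^{n+1}+(2u^n+v^n)\partial_x u^{n+1}=F(u^n,v^n),\\
\partial_t v^{n+1}+(2u^n+v^n)\partial_x v^{n+1}=H(u^n,v^n),\\
u^{n+1}(0,x)=u_0(x),\ v^{n+1}(0,x)=v_0(x),
\end{array}\right.
\end{equation*}
with $F,H$ as defined in \eqref{eq31}. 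By Lemma~\ref{existence}, since $s\geq2>1+\frac1p$ (and $B^s_{p,r}$ is an algebra in our range), each step is well-defined and $(u^{n+1},v^{n+1})\in E^s_{p,r}(T)\times E^s_{p,r}(T)$; the time-derivative regularity $C^1([0,T];B^{s-1}_{p,r})$ (or its $r=\infty$ analogue) comes directly from reading off the equation.

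\textbf{Step 2: Uniform estimates.} This is the technical heart. Applying Lemma~\ref{priori estimate} to each transport equation (the first branch of $V'$, since $s>\max(-\frac12,\frac1p-1)$) gives
\begin{equation*}
\|(u^{n+1},v^{n+1})(t)\|_{B^s_{p,r}}\leq e^{C\int_0^t\|(u^n,v^n)\|_{B^s_{p,r}}\,dt'}\Big(\|(u_0,v_0)\|_{B^s_{p,r}}+\int_0^t e^{-C\int_0^{t'}\cdots}\|(F,H)(u^n,v^n)\|_{B^s_{p,r}}\,dt'\Big),
\end{equation*}
where I abbreviate $\|(a,b)\|_X=\|a\|_X+\|b\|_X$. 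The point is to bound $\|F\|_{B^s_{p,r}}$ and $\|H\|_{B^s_{p,r}}$. Since $p*$ is convolution with $\frac12 e^{|x|}=(1-\partial_x^2)^{-1}$, property (7) of Proposition~2.3 shows $p*:B^{s-2}_{p,r}\to B^s_{p,r}$ is bounded, so it suffices to estimate the arguments in $B^{s-2}_{p,r}$. The terms $(2u_x+v_x)u$ and $(2u_x+v_x)v$ are handled by Lemma~\ref{product}(1) (the algebra estimate, with $\|u_x\|_{B^{s-2}_{p,r}}\lesssim\|u\|_{B^{s-1}_{p,r}}\lesssim\|u\|_{B^s_{p,r}}$ and $L^\infty\hookrightarrow$ bounds), while the genuinely delicate terms $v_xu_{xx}-v_{xx}u_x$ and $-2v_xu_{xx}-v_{xx}v_x$ involve the product of two distributions each only in $B^{s-2}_{p,r}$: these are exactly what Lemma~\ref{lemma} is for, giving $\|v_xu_{xx}\|_{B^{s-3}_{p,r}}\lesssim\|v_x\|_{B^{s-2}_{p,r}}\|u_{xx}\|_{B^{s-2}_{p,r}}\lesssim\|v\|_{B^s_{p,r}}\|u\|_{B^s_{p,r}}$, and then $p*$ gains the lost two derivatives to land back in $B^{s-1}_{p,r}\hookrightarrow B^{s-2}_{p,r}$ — in fact using $p*:B^{s-3}_{p,r}\to B^{s-1}_{p,r}$ we even keep a little extra regularity. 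Hence $\|(F,H)(u^n,v^n)\|_{B^s_{p,r}}\leq C\|(u^n,v^n)\|_{B^s_{p,r}}^2$. Plugging in and running the standard ODE-comparison argument, one picks $T\sim(C\|(u_0,v_0)\|_{B^s_{p,r}})^{-1}$ so that $\|(u^n,v^n)(t)\|_{B^s_{p,r}}\leq \frac{2\|(u_0,v_0)\|_{B^s_{p,r}}}{1-Ct\|(u_0,v_0)\|_{B^s_{p,r}}}\leq M$ for all $n$, $t\in[0,T]$; one also checks $\partial_t(u^n,v^n)$ is uniformly bounded in $B^{s-1}_{p,r}$.

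\textbf{Step 3: Convergence in $B^{s-1}_{p,r}$.} Set $\delta u^n=u^{n+1}-u^n$, $\delta v^n=v^{n+1}-v^n$. Subtracting consecutive equations, $\delta u^n$ solves a transport equation with drift $2u^n+v^n$, source $F(u^n,v^n)-F(u^{n-1},v^{n-1})-\delta(2u^{n-1}+v^{n-1})\partial_x u^n$, and zero initial data; similarly for $\delta v^n$. Applying Lemma~\ref{priori estimate} at regularity $s-1$ (still $>\max(-\frac12,\frac1p-1)$ when $s>\frac1p+\frac32$, and for the endpoint $s=2$ we have $s-1=1\geq\frac1p$ so the second branch applies), together with the product/tame estimates of Lemma~\ref{product} and Lemma~\ref{lemma} to bound the source differences by $CM(\|\delta u^{n-1}\|_{B^{s-1}_{p,r}}+\|\delta v^{n-1}\|_{B^{s-1}_{p,r}})$, one gets
\begin{equation*}
\|(\delta u^n,\delta v^n)(t)\|_{B^{s-1}_{p,r}}\leq Ce^{CMt}\int_0^t\|(\delta u^{n-1},\delta v^{n-1})(t')\|_{B^{s-1}_{p,r}}\,dt'.
\end{equation*}
Iterating shows $\sum_n\|(\delta u^n,\delta v^n)\|_{L^\infty_T B^{s-1}_{p,r}}$ converges (by a $\frac{(CMT)^n}{n!}$-type bound), so $(u^n,v^n)$ is Cauchy in $C([0,T];B^{s-1}_{p,r}\times B^{s-1}_{p,r})$; denote the limit $(u,v)$. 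Combined with the uniform $B^s_{p,r}$ bound and the Fatou property (Proposition~2.3(6)) and interpolation (Proposition~\ref{prop}(1)), one upgrades to convergence in $C([0,T];B^{s'}_{p,r})$ for every $s'<s$, which is enough to pass to the limit in the nonlinear terms.

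\textbf{Step 4: The limit is a solution in $E^s_{p,r}(T)$.} Passing to the limit in the weak formulation, $(u,v)$ solves \eqref{eq31}. That $(u,v)\in L^\infty([0,T];B^s_{p,r})$ follows from the uniform bound and Fatou; that it lies in $C([0,T];B^s_{p,r})$ (resp.\ $C_w$ when $r=\infty$) follows from Lemma~\ref{existence} applied once more to the transport equations \eqref{eq31} now that the drift $2u+v$ and sources $F,H$ are known to lie in the right spaces — or equivalently from Lemma~\ref{continuity}. The time-continuity of $\partial_t(u,v)$ in $B^{s-1}_{p,r}$ is read off the equation, giving $(u,v)\in E^s_{p,r}(T)\times E^s_{p,r}(T)$.

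\textbf{Step 5: Uniqueness and continuous dependence.} Uniqueness follows from the same difference estimate as in Step 3 applied to two solutions with the same data and Gronwall. For continuous dependence, given $(u_0^n,v_0^n)\to(u_0,v_0)$ in $B^s_{p,r}$, one combines the stability estimate in $B^{s-1}_{p,r}$ with Lemma~\ref{continuity} (applied to the transport equations satisfied by $u^n$ and $v^n$, with uniformly bounded drifts) to obtain convergence of the solutions in $C([0,T];B^s_{p,r})$; the borderline case $r=\infty$ is handled by the usual trick of working in $B^{s'}_{p,\infty}$ for $s'<s$ plus weak continuity.

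\textbf{Main obstacle.} The crux is Step 2: getting a \emph{closed} quadratic estimate $\|(F,H)\|_{B^s_{p,r}}\lesssim\|(u,v)\|_{B^s_{p,r}}^2$ at the low regularity threshold $s=\max\{2,\frac1p+\frac32\}$ (and the endpoint $s=2$) where the naive algebra estimate fails because $v_xu_{xx}$ is a product of two factors only in $B^{s-2}_{p,r}$ with $s-2$ possibly negative or subcritical; this is precisely what Lemma~\ref{lemma} (via the refined Bony-decomposition bounds of Propositions 2.6--2.7) resolves, and the smoothing $p*:B^{s-3}_{p,r}\to B^{s-1}_{p,r}$ is what makes the source terms land back in $B^s_{p,r}$ (indeed with a derivative to spare). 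The endpoint $s=2$, $r\le 2$ also requires care in Step 3 since one must verify the transport estimate Lemma~\ref{priori estimate} still applies at regularity $s-1=1$.
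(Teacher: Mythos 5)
Your overall strategy coincides with the paper's: an iterative scheme of linear transport equations, uniform bounds from Lemma~\ref{priori estimate}, convergence of the iterates in $C([0,T];B^{s-1}_{p,r})$, Fatou plus interpolation to recover the $B^s_{p,r}$ regularity of the limit, and a Gronwall difference estimate for uniqueness (your continuous-dependence Step 5 is what the paper treats separately as Theorem~\ref{theorem34}). The only substantive differences are cosmetic: the paper takes $(u^0,v^0)=(0,0)$ with mollified data $S_{n+1}u_0$, and proves the Cauchy property by comparing $u^{m+n+1}$ with $u^{n+1}$ and a limsup--Gronwall argument rather than summing consecutive differences.

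However, Step 2 as you wrote it does not close, and since you yourself call it the technical heart this is a genuine gap. You estimate the delicate terms only at the level $\|v_xu_{xx}\|_{B^{s-3}_{p,r}}\lesssim\|v\|_{B^s_{p,r}}\|u\|_{B^s_{p,r}}$ via Lemma~\ref{lemma}; since $p*=(1-\partial_x^2)^{-1}$ is an $S^{-2}$-multiplier it gains exactly two derivatives, so this only places $F,H$ in $B^{s-1}_{p,r}$, one derivative short of what Lemma~\ref{priori estimate} needs for the source term in the $B^s_{p,r}$ a priori bound; the asserted conclusion $\|(F,H)\|_{B^s_{p,r}}\lesssim\|(u,v)\|^2_{B^s_{p,r}}$ therefore does not follow from the chain you wrote, and with a source only in $B^{s-1}_{p,r}$ the iteration would lose a derivative at each step. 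The repair is exactly what the paper does: at the level of the uniform bounds one factor of each product ($u_x$ or $v_x$) lies in $B^{s-1}_{p,r}$ with $s-1>\frac1p$ throughout the admissible range (including $s=2$, $2\le p\le\infty$), so Lemma~\ref{product}(2) gives $\|v_xu_{xx}\|_{B^{s-2}_{p,r}}\le C\|v_x\|_{B^{s-1}_{p,r}}\|u_{xx}\|_{B^{s-2}_{p,r}}$, whence $F,H\in B^s_{p,r}$ with the desired quadratic bound. Lemma~\ref{lemma} is genuinely needed only where you invoke it in Step 3 (and in the uniqueness estimate), since there the differences are measured in $B^{s-1}_{p,r}$ and one really faces products of two factors that are merely in $B^{s-2}_{p,r}$, which the tame estimate no longer covers at the endpoint $s=2$.
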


\textbf{Proof:} In order to prove Theorem \ref{theorem}, we will prove it from the two parts of the existence and uniqueness of the solution. Firstly, we will prove the existence of the solution, which is mainly divided into the following four steps.

\textbf{Step 1:} Constructing approximate solutions

We firstly set $u^0\overset{\Delta}{=}0, v^0\overset{\Delta}{=}0$ and define a sequence $(u^n, v^n)_{n\in \mathbb{N}}$ of smooth functions by solving the following linear systems:
\begin{equation}\label{eq39}
 \left\{
  \begin{array}{l}
  u_{t}^{n+1}+(2u^{n}+v^{n})u_x^{n+1}=p*\big(-3(2u_x^{n}+v_x^{n})u^{n}+v_x^{n}u_{xx}^{n}-v_{xx}^{n}u_{x}^{n}\big)\overset{\Delta}{=}F_{n},\\
  v_{t}^{n+1}+(2u^{n}+v^{n})v_x^{n+1}=p*\big(-2(2u_x^{n}+v_x^{n})v^{n}-2v_x^{n}u_{xx}^{n}-v_{xx}^{n}v_{x}^{n}\big)\overset{\Delta}{=}H_{n},\\
  u^{n+1}|_{t=0}=S_{n+1}u_0,\quad v^{n+1}|_{t=0}=S_{n+1}v_0.
  \end{array}
 \right.
\end{equation}
Let $G^{n}=2u^{n}+v^{n}$, and assume that $ (u^{n}, v^n) \in L^{\infty}([0, T]; B_{p, r}^s \times B_{p, r}^s)$ for all positive $T$.
Since $p$ is an $S^{-2}-$multiplier. Noticing that $B_{p, r}^s$ is an algebra when $(s, p, r)$ satisfies the condition theorem \eqref{theorem} and combining Lemma 2.5,  we obtain
$$\|G_{x}^{n}\|_{B_{p,r}^{s-1}}\leq2\|u_{x}^{n}\|_{B_{p,r}^{s-1}}+\|v_{x}^{n}\|_{B_{p,r}^{s-1}}\leq C(\|u^{n}\|_{B_{p,r}^{s}}+\|v^{n}\|_{B_{p,r}^{s}}),$$
\begin{align*}
\|p*(-3(2u_x^{n}+v_x^{n})u^{n})\|_{B_{p,r}^{s}}&\leq C\|-3(2u_x^{n}+v_x^{n})u^{n})\|_{B_{p,r}^{s-2}}
\leq C\|-6u_x^{n}-3v_x^{n}\|_{B_{p,r}^{s-2}}\|u^{n}\|_{B_{p,r}^{s-1}}\\
&
\leq C(\|u^{n}\|_{B_{p,r}^{s}}+\|v^{n}\|_{B_{p,r}^{s}})\|u^{n}\|_{B_{p,r}^{s}},
\\
\|p*(v_x^{n}u_{xx}^{n})\|_{B_{p,r}^{s}}&\leq C\|v_x^{n}u_{xx}^{n}\|_{B_{p,r}^{s-2}}\leq C\|v_x^{n}\|_{B_{p,r}^{s-1}}\|u_{xx}^{n}\|_{B_{p,r}^{s-2}}\leq C\|v^{n}\|_{B_{p,r}^{s}}\|u^{n}\|_{B_{p,r}^{s}},
\\
\|p*(v_{xx}^{n}u_{x}^{n})\|_{B_{p,r}^{s}}&\leq C\|v_{xx}^{n}u_{x}^{n}\|_{B_{p,r}^{s-2}}\leq C\|v^{n}\|_{B_{p,r}^{s}}\|u^{n}\|_{B_{p,r}^{s}},
\end{align*}
thus, we get
\begin{equation}\label{eq40}
\|F_{n}\|_{B_{p,r}^{s}}\leq C(\|u^{n}\|_{B_{p,r}^{s}}+\|v^{n}\|_{B_{p,r}^{s}})\|u^{n}\|_{B_{p,r}^{s}}.
\end{equation}
Similarly, we can get
\begin{equation}\label{eq41}
\|H_{n}\|_{B_{p,r}^{s}}\leq C(\|u^{n}\|_{B_{p,r}^{s}}+\|v^{n}\|_{B_{p,r}^{s}})\|v^{n}\|_{B_{p,r}^{s}}.
\end{equation}
Then $G^n_x,\ F_n,\ H_n \in L^{\infty}([0,T];B^s_{p,r})$.
Hence Lemma \ref{existence} ensures that \eqref{eq39} has a global solution $(u^{n+1}, v^{n+1})$ which belongs to $E^s_{p,r}(T)\times E^s_{p,r}(T)$ for all $T>0$.

\textbf{Step 2:} Uniform bounds

Using Lemma 2.8 together with \eqref{eq40} and \eqref{eq41}, we have
$$
\|u^{n+1}(t)\|_{B_{p,r}^{s}}\leq e^{C\int_0^t \|G^n\|_{B^{s}_{p,r}}dt'}
    \Big(\|S_{n+1}u_0\|_{B^{s}_{p,r}}+\int_0^t e^{-C\int_0^{t'} \|G^n\|_{B^{s}_{p,r}} dt''}\|F^n\|_{B^{s}_{p,r}}dt'\Big).
$$
Let $A_n=\|u^n\|_{B^{s}_{p,r}}+\|v^n\|_{B^{s}_{p,r}}$. Then $\|G^n\|_{B^{s}_{p,r}}\leq CA_n$, we deduce
\begin{align}\label{eq312}
\|u^{n+1}(t)\|_{B_{p,r}^{s}}\leq e^{C\int_0^t A_ndt'}
    \Big(\|S_{n+1}u_0\|_{B^{s}_{p,r}}+\int_0^t e^{-C\int_0^{t'} A_n dt''}A_{n}\|u^n\|_{B^{s}_{p,r}}dt'\Big).
\end{align}
Similarly, we can get
\begin{align}\label{eq313}
\|v^{n+1}(t)\|_{B_{p,r}^{s}}\leq e^{C\int_0^t A_ndt'}
    \Big(\|S_{n+1}v_0\|_{B^{s}_{p,r}}+\int_0^t e^{-C\int_0^{t'} A_n dt''}A_{n}\|v^n\|_{B^{s}_{p,r}}dt'\Big).
\end{align}
The combination \eqref{eq312} and \eqref{eq313} yields
\begin{align}\label{eq314}
A_{n+1}(t)\leq e^{C\int_0^t A_ndt'}
    \Big(\|S_{n+1}u_0\|_{B^{s}_{p,r}}+\|S_{n+1}v_0\|_{B^{s}_{p,r}}+\int_0^t e^{-C\int_0^{t'} A_n dt''}A_{n}^2dt'\Big).
\end{align}
We fix a $T >0$ such that $2C(\|u_0\|_{B^{s}_{p,r}}+\|v_0\|_{B^{s}_{p,r}})T\leq 1,$ and suppose by induction that for all $t\in [0,T]$,
\begin{align}\label{eq315}
\|u^n\|_{B^{s}_{p,r}}+\|v^n\|_{B^{s}_{p,r}}\leq\frac{\|u_0\|_{B^{s}_{p,r}}+\|v_0\|_{B^{s}_{p,r}}}{1-2C(\|u_0\|_{B^{s}_{p,r}}+\|v_0\|_{B^{s}_{p,r}})t}.
\end{align}
Substituting \eqref{eq315} into \eqref{eq314} yields
\begin{align*}
\|u^{n+1}\|_{B^{s}_{p,r}}+\|v^{n+1}\|_{B^{s}_{p,r}}
&\leq \big(1-2C(\|u_0\|_{B^{s}_{p,r}}+\|v_0\|_{B^{s}_{p,r}})t\big)^{-\frac{1}{2}}\big(\|S_{n+1}u_0\|_{B^{s}_{p,r}}+\|S_{n+1}v_0\|_{B^{s}_{p,r}}\\
&
+C(\|u_0\|_{B^{s}_{p,r}}+\|v_0\|_{B^{s}_{p,r}})^2\int_0^{t}\frac{1}{(1-2Ct(\|u_0\|_{B^{s}_{p,r}}+\|v_0\|_{B^{s}_{p,r}}))^{\frac{3}{2}}}dt'\big)\\
&
\leq \big(1-2C(\|u_0\|_{B^{s}_{p,r}}+\|v_0\|_{B^{s}_{p,r}})t\big)^{-\frac{1}{2}}\big(\|S_{n+1}u_0\|_{B^{s}_{p,r}}+\|S_{n+1}v_0\|_{B^{s}_{p,r}}\\
&
\times(1-2Ct(\|u_0\|_{B^{s}_{p,r}}+\|v_0\|_{B^{s}_{p,r}}))^{-\frac{1}{2}}\big)\\
&
\leq\frac{\|u_0\|_{B^{s}_{p,r}}+\|v_0\|_{B^{s}_{p,r}}}{1-2Ct(\|u_0\|_{B^{s}_{p,r}}+\|v_0\|_{B^{s}_{p,r}})}.
\end{align*}
Therefore, $(u^n ,v^n)_{n\in \mathbb{N}}$ is bounded in $L^{\infty}([0,T], B_{p,r}^{s} \times B_{p,r}^{s})$.

\textbf{Step 3:} Cauchy sequence

Next we are going to prove that $(u^n, v^n)_{n\in \mathbb{N}}$ is a Cauchy sequence in $L^{\infty}([0, T]; B^{s-1}_{p, r}\times B^{s-1}_{p, r})$. For that purpose, we have, for all $(n, m) \in \mathbb{N}^2$,
$$u_{t}^{m+n+1}+G^{m+n}u_x^{m+n+1}=p*\big(-3(2u_x^{m+n}+v_x^{m+n})u+v_x^{m+n}u_{xx}^{m+n}-v_{xx}^{m+n}u_{x}^{m+n}\big)\overset{\Delta}{=}F^{m+n},$$
$$u_{t}^{n+1}+G^{n}u_x^{n+1}=p*\big(-3(2u_x^{n}+v_x^{n})u+v_x^{n}u_{xx}^{n}-v_{xx}^{n}u_{x}^{n}\big)\overset{\Delta}{=}F^{n},$$
then, we have
$$u_{t}^{m+n+1}-u_{t}^{n+1}+G^{m+n}(u_{x}^{m+n+1}-u_{x}^{n+1})=(G^{n}-G^{m+n})u_{x}^{n+1}+F^{m+n}-F^{n},$$
$$v_{t}^{m+n+1}-v_{t}^{n+1}+G^{m+n}(v_{x}^{m+n+1}-v_{x}^{n+1})=(G^{n}-G^{m+n})v_{x}^{n+1}+H^{m+n}-H^{n},$$
where
\begin{align*}
G^n-G^{m+n}&=2(u^n-u^{n+m})+v^n-v^{n+m},
\\
F^{m+n}-F^{n}&=p*(-6u_{x}^{m+n}u^{m+n}-3v_{x}^{m+n}u^{m+n}+v_{x}^{m+n}u_{xx}^{m+n}-v_{xx}^{m+n}u_{x}^{m+n}\\
&+6u_{x}^{n}u^{n}+3v_{x}^{n}u^{n}-v_{x}^{n}u_{xx}^{n}+v_{xx}^{n}u_{x}^{n}),
\\
H^{m+n}-H^{n}&=p*(-4u_{x}^{m+n}v^{m+n}-2v_{x}^{m+n}v^{m+n}-2v_{x}^{m+n}u_{xx}^{m+n}-v_{xx}^{m+n}v_{x}^{m+n}\\
&+4u_{x}^{n}v^{n}+2v_{x}^{n}v^{n}+2v_{x}^{n}u_{xx}^{n}+v_{x}^{n}v_{xx}^{n}).
\end{align*}
Applying Lemma 2.8 yields, for any $t\in [0, T],$
\begin{align}\label{eq316}
\|(u^{m+n+1}-u^{n+1})(t)\|_{B^{s-1}_{p,r}}&\leq e^{C\int_0^t \|G_x^{m+n}\|_{B^{s-1}_{p,r}}dt'}\Big(\|S_{m+n+1}u_0-S_{n+1}u_0\|_{B^{s-1}_{p,r}}\notag\\
&
+\int_0^t e^{-C\int_0^{t'} \|G_x^{m+n}\|_{B^{s-1}_{p,r}} dt''}(\|(G^n-G^{m+n})u_x^{n+1}\|_{B^{s-1}_{p,r}}+\|F^{m+n}-F^{n}\|_{B^{s-1}_{p,r}}dt'\Big)
\end{align}
and
\begin{align}\label{eq317}
\|(v^{m+n+1}-v^{n+1})(t)\|_{B^{s-1}_{p,r}}&\leq e^{C\int_0^t \|G_x^{m+n}\|_{B^{s-1}_{p,r}}dt'}\Big(\|S_{m+n+1}v_0-S_{n+1}v_0\|_{B^{s-1}_{p,r}}\notag\\
&
+\int_0^t e^{-C\int_0^{t'} \|G_x^{m+n}\|_{B^{s-1}_{p,r}} dt''}(\|(G^n-G^{m+n})v_x^{n+1}\|_{B^{s-1}_{p,r}}+\|H^{m+n}-H^{n}\|_{B^{s-1}_{p,r}}dt'\Big).
\end{align}
For $s>\frac{1}{p}+\frac{3}{2}$, $B_{p, r}^{s-1} \hookrightarrow L^{\infty}$ is an algebra, according Proposition 2.3 and Lemma 2.5, we have
\begin{align}\label{eq318}
\|(G^n-G^{m+n})u_{x}^{n+1}\|_{B^{s-1}_{p,r}}&\leq \|G^n-G^{m+n}\|_{B^{s-1}_{p,r}}\|u_x^{n+1}\|_{B^{s-1}_{p,r}}\notag\\
&
\leq C(\|u^n-u^{m+n}\|_{B^{s-1}_{p,r}}+\|v^n-v^{m+n}\|_{B^{s-1}_{p,r}})\|u^{n+1}\|_{B^{s}_{p,r}}.
\end{align}
According to Proposition 2.3, Lemmas 2.5 and 3.2, and the assumption about $s$ in Theorem \ref{theorem}, we obtain
\begin{align*}
\|p*(-6u_{x}^{m+n}u^{m+n}+6u_{x}^{n}u^{n})\|_{B^{s-1}_{p,r}}&\leq C\|-6u_{x}^{m+n}(u^{m+n}-u^n)-6u^{n}(u_x^{m+n}-u_x^{n})\|_{B^{s-3}_{p,r}}\\
&
\leq C\|u_x^{m+n}\|_{B^{s-1}_{p,r}}\|u^{m+n}-u^{n}\|_{B^{s-2}_{p,r}}+C\|u^{n}\|_{B^{s}_{p,r}}\|u_x^{m+n}-u_x^{n}\|_{B^{s-2}_{p,r}}\\
&
\leq C\|u^{m+n}\|_{B^{s}_{p,r}}\|u^{m+n}-u^{n}\|_{B^{s-1}_{p,r}}+C\|u^{n}\|_{B^{s}_{p,r}}\|u^{m+n}-u^{n}\|_{B^{s-1}_{p,r}},
\\
\|p*(-3v_{x}^{m+n}u^{m+n}+3v_{x}^{n}u^{n})\|_{B^{s-1}_{p,r}}&\leq C\|-3v_{x}^{m+n}(u^{m+n}-u^n)-3u^{n}(v_x^{m+n}-v_x^{n})\|_{B^{s-3}_{p,r}}\\
&
\leq C\|v_x^{m+n}\|_{B^{s-3}_{p,r}}\|u^{m+n}-u^{n}\|_{B^{s-2}_{p,r}}+c\|u^{n}\|_{B^{s}_{p,r}}\|v_x^{m+n}-v_x^{n}\|_{B^{s-2}_{p,r}}\\
&
\leq C\|v^{m+n}\|_{B^{s}_{p,r}}\|u^{m+n}-u^{n}\|_{B^{s-1}_{p,r}}+c\|u^{n}\|_{B^{s}_{p,r}}\|v^{m+n}-v^{n}\|_{B^{s-1}_{p,r}},
\\
\|p*(v_{x}^{m+n}u_{xx}^{m+n}-v_{x}^{n}u_{xx}^{n})\|_{B^{s-1}_{p,r}}&\leq C\|v_{x}^{m+n}(u_{xx}^{m+n}-u_{xx}^n)+u_{xx}^{n}(v_x^{m+n}-v_x^{n})\|_{B^{s-3}_{p,r}}\\
&
\leq C\|v_x^{m+n}\|_{B^{s-1}_{p,r}}\|u_{xx}^{m+n}-u_{xx}^{n}\|_{B^{s-3}_{p,r}}+C\|u_{xx}^{n}\|_{B^{s-2}_{p,r}}\|v_x^{m+n}-v_x^{n}\|_{B^{s-2}_{p,r}}\\
&
\leq C\|v^{m+n}\|_{B^{s}_{p,r}}\|u^{m+n}-u^{n}\|_{B^{s-1}_{p,r}}+C\|u^{n}\|_{B^{s}_{p,r}}\|v^{m+n}-v^{n}\|_{B^{s-1}_{p,r}},
\\
\|p*(-v_{xx}^{m+n}u_{x}^{m+n}+v_{xx}^{n}u_{x}^{n})\|_{B^{s-1}_{p,r}}&\leq C\|-v_{xx}^{m+n}(u_{x}^{m+n}-u_{x}^n)-u_{x}^{n}(v_{xx}^{m+n}-v_{xx}^{n})\|_{B^{s-3}_{p,r}}\\
&
\leq C\|v_{xx}^{m+n}\|_{B^{s-2}_{p,r}}\|u_{x}^{m+n}-u_{x}^{n}\|_{B^{s-2}_{p,r}}+C\|u_{x}^{n}\|_{B^{s-1}_{p,r}}\|v_{xx}^{m+n}-v_{xx}^{n}\|_{B^{s-3}_{p,r}}\\
&
\leq C\|v^{m+n}\|_{B^{s}_{p,r}}\|u^{m+n}-u^{n}\|_{B^{s-1}_{p,r}}+C\|u^{n}\|_{B^{s}_{p,r}}\|v^{m+n}-v^{n}\|_{B^{s-1}_{p,r}}.
\end{align*}
According to the above inequalities, we get
\begin{align}\label{eq319}
\|F^{m+n}-F^{n}\|_{B^{s-1}_{p,r}}&\leq C(\|u^{m+n}\|_{B^{s}_{p,r}}+\|v^{m+n}\|_{B^{s}_{p,r}}+\|u^{n}\|_{B^{s}_{p,r}})\|u^{m+n}-u^{n}\|_{B^{s-1}_{p,r}}\notag\\
&+C\|u^n\|_{B^{s}_{p,r}}\|v^{m+n}-v^{n}\|_{B^{s-1}_{p,r}}.
\end{align}
Similar to the calculation of $F^{n}$, we get
\begin{align}\label{eq320}
\|H^{m+n}-H^{n}\|_{B^{s-1}_{p,r}}&\leq C(\|u^{m+n}\|_{B^{s}_{p,r}}+\|v^{m+n}\|_{B^{s}_{p,r}}+\|u^{n}\|_{B^{s}_{p,r}}+\|v^{n}\|_{B^{s}_{p,r}})\|v^{m+n}-v^{n}\|_{B^{s-1}_{p,r}}\notag\\
&+C(\|u^n\|_{B^{s}_{p,r}}+\|u^{m+n}\|_{B^{s}_{p,r}})\|u^{m+n}-u^{n}\|_{B^{s-1}_{p,r}}.
\end{align}
Since $(u^n, v^n)_{n\in \mathbb{N}}$ and $(u^{m+n}, v^{m+n})_{m+n\in \mathbb{N}}$ are bounded in $L^{\infty}([0, T]; B^s_{p, r}\times B^s_{p, r})$ for all in $[0, T].$
The combination \eqref{eq316}, \eqref{eq317}, \eqref{eq318}, \eqref{eq319} and \eqref{eq320} yields
\begin{align*}
\|(u^{m+n+1}-u^{n+1})(t)\|_{B^{s-1}_{p,r}}&\leq C\Big(\|S_{m+n+1}u_0-S_{n+1}u_0\|_{B^{s-1}_{p,r}}+\int_0^t(\|u^{m+n}-u^{n}\|_{B^{s-1}_{p,r}}+\|v^{m+n}-v^{n}\|_{B^{s-1}_{p,r}})dt'\Big),
\\
\|(v^{m+n+1}-v^{n+1})(t)\|_{B^{s-1}_{p,r}}&\leq C\Big(\|S_{m+n+1}v_0-S_{n+1}v_0\|_{B^{s-1}_{p,r}}+\int_0^t(\|u^{m+n}-u^{n}\|_{B^{s-1}_{p,r}}+\|v^{m+n}-v^{n}\|_{B^{s-1}_{p,r}})dt'\Big).
\end{align*}
Hence, we have
\begin{align*}
\|(u^{m+n+1}-u^{n+1})(t)\|_{B^{s-1}_{p,r}}&+\|(v^{m+n+1}-v^{n+1})(t)\|_{B^{s-1}_{p,r}}\leq C\Big(\|S_{m+n+1}u_0-S_{n+1}u_0\|_{B^{s-1}_{p,r}}\\
&
+\|S_{m+n+1}v_0-S_{n+1}v_0\|_{B^{s-1}_{p,r}}+\int_0^t(\|u^{m+n}-u^{n}\|_{B^{s-1}_{p,r}}+\|v^{m+n}-v^{n}\|_{B^{s-1}_{p,r}})dt'\Big).
\end{align*}
Taking an upper bound on $[0, t]$ yields
\begin{align}\label{eq321}
\|u^{m+n+1}&-u^{n+1}\|_{L^\infty_{t}({B^{s-1}_{p,r}})}+\|v^{m+n+1}-v^{n+1}\|_{L^\infty_{t}({B^{s-1}_{p,r}})}\leq C\Big(\|S_{m+n+1}u_0-S_{n+1}u_0\|_{B^{s-1}_{p,r}}\notag\\
&
+\|S_{m+n+1}v_0-S_{n+1}v_0\|_{B^{s-1}_{p,r}}+\int_0^t(\|u^{m+n}-u^{n}\|_{L^\infty_{t'}({B^{s-1}_{p,r}})}+\|v^{m+n}-v^{n}\|_{L^\infty_{t'}({B^{s-1}_{p,r}})})dt'\Big).
\end{align}
Let $g_n(t)=\sup_m(\|u^{m+n}-u^{n}\|_{L^{\infty}_{t}(B^{s-1}_{p,r})}+\|v^{m+n}-v^{n}\|_{L^{\infty}_{t}(B^{s-1}_{p,r})})$. Then \eqref{eq321} can be written as
$$g_{n+1}(t)\leq C\sup_m(\|S_{m+n+1}u_0-S_{n+1}u_0\|_{B^{s-1}_{p,r}}+\|S_{m+n+1}v_0-S_{n+1}v_0\|_{B^{s-1}_{p,r}}+\int_0^{t}g_n(t')dt').$$
Obviously, $(S_nu^0, S_nv^0)_{n\in \mathbb{N}}$ is a Cauchy sequence in $B^{s-1}_{p, r}$. By Fatou's lemma, we have
$$
g(t)\overset{\Delta}{=}\limsup_{n\rightarrow\infty}g_{n+1}(t)\leq C\int_0^tg(t')dt'.
$$
The Gronwall inequality entails that $g(t)=0$ for all $t\in [0,T]$. Therefore $(u^n, v^n)_{n\in \mathbb{N}}$ is a Cauchy sequence in $C([0,T]; B^{s-1}_{p,r}\times B^{s-1}_{p,r})$ and converges to some limit function $(u, v) \in C([0,T]; B^{s-1}_{p,r}\times B^{s-1}_{p,r})$.

\textbf{Step 4:} Convergence

We have to prove that $(u, v)$ belongs to $E^s_{p,r}(T)\times E^s_{p,r}(T)$ and solves \eqref{eq31}. Since $(u^n, v^n)_{n\in \mathbb{N}}$ is uniformly bounded in $L^{\infty}([0,T]; B^{s}_{p,r}\times B^{s}_{p,r})$,
we can use the Fatou property for the Besov spaces to show that $(u, v)$ also belongs to $L^{\infty}([0,T];B^{s}_{p,r}\times B^{s}_{p,r})$.
On the other hand, as $(u^n, v^n)_{n\in \mathbb{N}}$ converges to $(u, v)$ in $C([0,T]; B^{s-1}_{p,r}\times B^{s-1}_{p,r})$,
an interpolation argument insure that the convergence holds in $C([0,T];B^{s'}_{p,r}\times B^{s'}_{p,r})$ for any $s'<s$.
Then it is easy to pass to the limit in \eqref{eq39} and to conclude that $(u, v)$ is indeed a solution of \eqref{eq31} in the sense of distributions.

Finally, thanks to the fact that $(u, v)$ belongs to $L^{\infty}([0,T]; B^{s}_{p,r}\times B^{s}_{p,r})$, we known that the right-hand side of \eqref{eq31}
also belongs to $L^{\infty}([0,T]; B^{s}_{p,r}\times B^{s}_{p,r})$.
Taking advantage of Lemma \ref{existence}, we can deduce that $(u, v)$ belongs to $C([0,T];B^{s}_{p,r}\times B^{s}_{p,r})$ (resp., $C_w([0,T]; B^s_{p,r}\times B^{s}_{p,r})$)
if $r<\infty$ (resp., $r=\infty$). Again using the equation \eqref{eq31}, we see that $(u_t, v_t)$ is in $C([0,T];B^{s-1}_{p,r}\times B^{s-1}_{p,r})$ if $r$ is finite,
and in $L^{\infty}([0,T]; B^{s-1}_{p,r}\times B^{s-1}_{p,r})$ otherwise. All in all, $(u, v)$ belongs to $E^s_{p,r}(T)\times E^s_{p,r}(T)$.

Secondly, we are going to prove the uniqueness of solutions to the equation \eqref{eq31} based on the proof of the third step.

Suppose $(u^1, v^1)$ and $ (u^2, v^2)$ are two sets of solutions
of the system (3.1) with the initial data $(u_0^1, v_0^1)$ and $ (u_0^2, v_0^2)$, respectively. Denoting $u^{12}\overset{\Delta}{=}u^1-u^2$ and $v^{12}\overset{\Delta}{=}v^1-v^2$, We obtain
\begin{equation}\label{eq322}
\left\{
  \begin{array}{l}
u^{12}_t+G^1u^{12}_x=-(G^1-G^2)u^{2}_x+F^1-F^2,
\\
v^{12}_t+G^1u^{12}_x=-(G^1-G^2)v^{2}_x+H^1-H^2,
\\
u^{12}|_{t=0}=u^{12}_0\overset{\Delta}{=}u^1_0-u^2_0, v^{12}|_{t=0}=v^{12}_0\overset{\Delta}{=}v^1_0-v^2_0,
  \end{array}
  \right.
\end{equation}
where for $i=1,2,$
\begin{align*}
G^i=2u^i+v^i, \quad F^i=p*(-3(2u_x^i+v_x^i)u^i+v_x^iu_{xx}^i-v_{xx}^iu_{x}^i),\\
H^i=p*(-2(2u_x^i+v_x^i)v^i+2v_x^iu_{xx}^i+v_{xx}^iv_{x}^i).
\end{align*}
By virtue to Lemma \ref{priori estimate}, we have
\begin{equation}\label{eq323}
\begin{split}
\|u^{12}(t)\|_{B^{s-1}_{p,r}}&\leq e^{C\int_0^t \|G_x^{1}\|_{B^{s-1}_{p,r}}dt'}\Big(\|(u^1-u^2)(0)\|_{B^{s-1}_{p,r}}\\
&
+\int_0^t e^{-C\int_0^{t'} \|G_x^{1}\|_{B^{s-1}_{p,r}} dt''}(\|(G^2-G^{1})u_x^{2}\|_{B^{s-1}_{p,r}}+\|F^{1}-F^{2}\|_{B^{s-1}_{p,r}}dt'\Big),
\\
\|v^{12}(t)\|_{B^{s-1}_{p,r}}&\leq e^{C\int_0^t \|G_x^{1}\|_{B^{s-1}_{p,r}}dt'}\Big(\|(v^1-v^2)(0)\|_{B^{s-1}_{p,r}}\\
&
+\int_0^t e^{-C\int_0^{t'} \|G_x^{1}\|_{B^{s-1}_{p,r}} dt''}(\|(G^2-G^{1})v_x^{2}\|_{B^{s-1}_{p,r}}+\|H^{1}-H^{2}\|_{B^{s-1}_{p,r}}dt'\Big).
\end{split}
\end{equation}
Similar to the calculation in Step 3, we get
\begin{equation}\label{eq324}
\begin{split}
&\|(G^2-G^{1})u_{x}^{2}\|_{B^{s-1}_{p,r}}\leq C(\|u^2-u^{1}\|_{B^{s-1}_{p,r}}+\|v^2-v^{1}\|_{B^{s-1}_{p,r}})\|u^{2}\|_{B^{s}_{p,r}},
\\
&\|F^{1}-F^{2}\|_{B^{s-1}_{p,r}}\leq C(\|u^{1}\|_{B^{s}_{p,r}}+\|v^{1}\|_{B^{s}_{p,r}}+\|u^{2}\|_{B^{s}_{p,r}}+\|v^{2}\|_{B^{s}_{p,r}})(\|u^{12}\|_{B^{s-1}_{p,r}}+\|v^{12}\|_{B^{s-1}_{p,r}}),
\\
&\|H^{1}-H^{2}\|_{B^{s-1}_{p,r}}\leq C(\|u^{1}\|_{B^{s}_{p,r}}+\|v^{1}\|_{B^{s}_{p,r}}+\|u^{2}\|_{B^{s}_{p,r}}+\|v^{2}\|_{B^{s}_{p,r}})(\|u^{12}\|_{B^{s-1}_{p,r}}+\|v^{12}\|_{B^{s-1}_{p,r}}).
\end{split}
\end{equation}
Substituting \eqref{eq324} into \eqref{eq323} yields that
\begin{align*}
\|u^{12}(t)\|_{B^{s-1}_{p,r}}+\|v^{12}(t)\|_{B^{s-1}_{p,r}}
\leq &e^{C\int_0^t (\|u^1\|_{B^{s}_{p,r}}+\|v^1\|_{B^{s}_{p,r}})dt'}\Big(\|u_0^{12}\|_{B^{s-1}_{p,r}}+\|v_0^{12}\|_{B^{s-1}_{p,r}}\\
&
+C\int_0^t e^{-C\int_0^{t'} (\|u^1\|_{B^{s}_{p,r}}+\|v^1\|_{B^{s}_{p,r}}) dt''}
(\|u^{1}\|_{B^{s}_{p,r}}+\|v^{1}\|_{B^{s}_{p,r}}+\|u^{2}\|_{B^{s}_{p,r}}\\
&
+\|v^{2}\|_{B^{s}_{p,r}})(\|u^{12}\|_{B^{s-1}_{p,r}}+\|v^{12}\|_{B^{s-1}_{p,r}})dt'\Big).
\end{align*}
Hence, applying Gronwall's inequality, we can get
\begin{equation}\label{eq3251}
\begin{split}
\|u^{1}(t)-u^{2}(t)\|_{B^{s-1}_{p,r}}+\|v^{1}(t)-v^{2}(t)\|_{B^{s-1}_{p,r}}&\leq (\|u_0^{1}-u_0^{2}\|_{B^{s-1}_{p,r}}+\|v_0^{1}-v_0^{2}\|_{B^{s-1}_{p,r}})\\
&
\times e^{C\int_0^{t} (\|u^{1}\|_{B^{s}_{p,r}}+\|v^{1}\|_{B^{s}_{p,r}}+\|u^{2}\|_{B^{s}_{p,r}}+\|v^{2}\|_{B^{s}_{p,r}}) dt'}.
\end{split}
\end{equation}
If choosing the initial value $(u_0^1, v_0^1)=(u_0^2, v_0^2)$, we can get $(u^1, v^1)=(u^2, v^2)$  from equation \eqref{eq3251}.
Therefore, the uniqueness of the solution of system \eqref{eq31} in $E^s_{p,r}(T)\times E^s_{p,r}(T)$ space is proved.

Combing with the proofs of the existence and uniqueness of the solution, we completes the proof of Theorem \ref{theorem}.

\textbf{Remark 1.} The Theorem \ref{theorem} improves the corresponding result in \cite{3}, where $s>\max\{\frac{5}{2}, 2+\frac{1}{p}\}$.

\begin{theo}\label{theorem34}
Let $(s,p,r)$ be the statement of Theorem \ref{theorem}. Denote $\bar{\mathbb{N}}=\mathbb{N}\cup\{\infty\}$. Suppose that $(u^n, v^n)_{n\in\bar{\mathbb{N}}}$
is the corresponding solution to \eqref{eq31} given by Theorem \ref{theorem} with the initial data $(u_0^n, v_0^n)\in B^{s-1}_{p,r} \times B^{s-1}_{p,r}$. If $(u_0^n,v_0^n) \rightarrow (u_0^{\infty}, v_0^{\infty})$ in $B^{s-1}_{p,r}\times B^{s-1}_{p,r}$, then $(u^n, v^n) \rightarrow (u^{\infty},v^{\infty})$ in $C([0,T];B^{s-1}_{p,r}\times B^{s-1}_{p,r})\ (resp., C_w([0,T];B^{s-1}_{p,r}\times B^{s-1}_{p,r}))$ if $r<\infty\ (resp., r=\infty)$ with
$2C T\sup_{n\in\bar{\mathbb{N}}}(\|u^n_0\|_{B^{s}_{p,r}}+\|u^n_0\|_{B^{s}_{p,r}})<1.$
\end{theo}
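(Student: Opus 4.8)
The plan is to obtain Theorem~\ref{theorem34} as a soft consequence of the Lipschitz-type bound \eqref{eq3251} already established in the uniqueness part of Theorem~\ref{theorem}; the only genuinely new ingredient is uniform control of the \emph{whole} family $(u^n,v^n)_{n\in\bar{\mathbb N}}$ on one common time interval. First I would set $N:=\sup_{n\in\bar{\mathbb N}}\big(\|u^n_0\|_{B^s_{p,r}}+\|v^n_0\|_{B^s_{p,r}}\big)$, which by hypothesis is finite and obeys $2CNT<1$, and rerun the induction of Step~2 in the proof of Theorem~\ref{theorem} with the data $(u^n_0,v^n_0)$ for each fixed $n\in\bar{\mathbb N}$. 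This shows that every solution produced by Theorem~\ref{theorem} in the family is in fact defined on the \emph{same} interval $[0,T]$ and satisfies the \emph{uniform} bound
$$\sup_{t\in[0,T]}\big(\|u^n(t)\|_{B^s_{p,r}}+\|v^n(t)\|_{B^s_{p,r}}\big)\le\frac{N}{1-2CNT}=:M,\qquad n\in\bar{\mathbb N}.$$

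Next I would apply \eqref{eq3251} to the pair of solutions $(u^n,v^n)$ and $(u^\infty,v^\infty)$. Recall that \eqref{eq3251} was derived for arbitrary pairs of solutions of \eqref{eq31} by writing the system for the differences $w^n:=u^n-u^\infty$, $z^n:=v^n-v^\infty$ --- a linear transport system with drift $G^n=2u^n+v^n$, initial datum $(u^n_0-u^\infty_0,v^n_0-v^\infty_0)$ and source terms $-(G^n-G^\infty)\partial_x u^\infty+F^n-F^\infty$ and $-(G^n-G^\infty)\partial_x v^\infty+H^n-H^\infty$ --- and then bounding these sources in $B^{s-1}_{p,r}$ by Lemma~\ref{priori estimate}, Lemma~\ref{product} and Lemma~\ref{lemma}, exactly as in \eqref{eq318}, \eqref{eq319}, \eqref{eq320} and \eqref{eq324}, followed by Gronwall. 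Replacing the a priori norms $\|u^i\|_{B^s_{p,r}},\|v^i\|_{B^s_{p,r}}$ by the uniform constant $M$, \eqref{eq3251} then yields, for all $t\in[0,T]$,
$$\|u^n(t)-u^\infty(t)\|_{B^{s-1}_{p,r}}+\|v^n(t)-v^\infty(t)\|_{B^{s-1}_{p,r}}\le e^{4CMT}\big(\|u^n_0-u^\infty_0\|_{B^{s-1}_{p,r}}+\|v^n_0-v^\infty_0\|_{B^{s-1}_{p,r}}\big),$$
with an exponential factor that does not depend on $n$.

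Letting $n\to\infty$ and invoking the hypothesis $(u^n_0,v^n_0)\to(u^\infty_0,v^\infty_0)$ in $B^{s-1}_{p,r}\times B^{s-1}_{p,r}$, the right-hand side vanishes, hence $\sup_{t\in[0,T]}\big(\|u^n(t)-u^\infty(t)\|_{B^{s-1}_{p,r}}+\|v^n(t)-v^\infty(t)\|_{B^{s-1}_{p,r}}\big)\to0$, that is $(u^n,v^n)\to(u^\infty,v^\infty)$ in $C([0,T];B^{s-1}_{p,r}\times B^{s-1}_{p,r})$ when $r<\infty$. When $r=\infty$ the same estimate holds verbatim and each $u^n-u^\infty$ belongs to $C^{0,1}([0,T];B^{s-1}_{p,\infty})\subset C([0,T];B^{s-1}_{p,\infty})$, so the convergence holds in $C([0,T];B^{s-1}_{p,\infty}\times B^{s-1}_{p,\infty})$, hence a fortiori in $C_w([0,T];B^{s-1}_{p,\infty}\times B^{s-1}_{p,\infty})$.

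I do not expect a real obstacle: the hard analytic input --- in particular the bilinear inequality Lemma~\ref{lemma}, which is exactly what allows the $v_xu_{xx}$-type differences to be controlled in $B^{s-1}_{p,r}$ without a loss of derivative --- was already carried out in Section~3, so the argument is essentially bookkeeping on top of \eqref{eq3251}. The two points that require some care are (i) that the \emph{entire} family, including $n=\infty$, lives on a common interval $[0,T]$ with a common $B^s_{p,r}$-bound $M$, which is exactly what the smallness assumption $2CT\sup_{n\in\bar{\mathbb N}}(\|u^n_0\|_{B^s_{p,r}}+\|v^n_0\|_{B^s_{p,r}})<1$ secures through Step~2, and (ii) that the constant multiplying the initial-data difference be independent of $n$, which follows once the bound $M$ is uniform. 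An equivalent route splits $u^n=\bar u^n+(u^n-\bar u^n)$, where $\bar u^n$ solves the linear transport equation with the \emph{limiting} drift $G^\infty$ and forcing $F^\infty$ but the $n$-th datum $u^n_0$; then $\bar u^n-u^\infty$ is handled by the plain transport estimate of Lemma~\ref{priori estimate} and $u^n-\bar u^n$ by a Gronwall argument, but the direct use of \eqref{eq3251} is shorter. Only if one demanded continuity of the data-to-solution map in the \emph{strong} norm $B^s_{p,r}$ would a Bona--Smith mollification argument be required; that is the substantially harder variant and is not what is claimed here.
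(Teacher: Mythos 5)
Your proposal is correct, but it is not the route the paper takes. You deduce the theorem directly from the Lipschitz stability estimate \eqref{eq3251} established in the uniqueness step: the hypothesis $2CT\sup_{n\in\bar{\mathbb N}}\big(\|u^n_0\|_{B^s_{p,r}}+\|v^n_0\|_{B^s_{p,r}}\big)<1$ lets you rerun Step 2 for each datum, so the whole family (including $n=\infty$) is defined on the common interval $[0,T]$ with a common bound $M$, and \eqref{eq3251} then gives an $n$-independent Lipschitz bound $\sup_{[0,T]}\big(\|u^n-u^\infty\|_{B^{s-1}_{p,r}}+\|v^n-v^\infty\|_{B^{s-1}_{p,r}}\big)\le e^{CMT'}\big(\|u^n_0-u^\infty_0\|_{B^{s-1}_{p,r}}+\|v^n_0-v^\infty_0\|_{B^{s-1}_{p,r}}\big)\to 0$; since every solution lies in $E^s_{p,r}(T)$ and hence is continuous in time with values in $B^{s-1}_{p,r}$ (Lipschitz in time when $r=\infty$), this is convergence in $C([0,T];B^{s-1}_{p,r})$, and for $r=\infty$ it is even stronger than the claimed $C_w$ convergence. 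The paper instead argues by the splitting $u^n=y^n+z^n$, $v^n=g^n+f^n$, where $(y^n,g^n)$ solve the transport equations with drift $G^n$ but the limiting data $(u_0^\infty,v_0^\infty)$ and sources $(F^\infty,H^\infty)$, while $(z^n,f^n)$ carry the differences; the $(y^n,g^n)$ part is handled by Lemma \ref{continuity} (stability of the linear transport flow when the drifts converge), the $(z^n,f^n)$ part by Lemma \ref{priori estimate}, the bounds \eqref{eq328} and Gronwall, and the case $r=\infty$ by a separate duality argument. Note that the paper's proof invokes ``we have already known $(u^n,v^n)\to(u^\infty,v^\infty)$ in $L^\infty([0,T];B^{s-1}_{p,r}\times B^{s-1}_{p,r})$'' to verify the drift-convergence hypothesis of Lemma \ref{continuity}; that assertion is exactly what your direct use of \eqref{eq3251} supplies, so your argument is in effect the first (and, for the $B^{s-1}$-level statement, essentially sufficient) step of the paper's scheme. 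What the paper's heavier machinery buys is a template that extends to continuity of the data-to-solution map in the strong norm $B^s_{p,r}$, where no Lipschitz estimate is available and a splitting or Bona--Smith argument is unavoidable, exactly as you note at the end; what your route buys is brevity, an explicit $n$-uniform rate, and a stronger conclusion when $r=\infty$.
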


\begin{proof}
According to the proof of the existence, we find for all $n\in\bar{\mathbb{N}},\ t\in[0,T]$,
$$\|u^n(t)\|_{B^s_{p,r}}+\|v^n(t)\|_{B^s_{p,r}}\leq \frac{C(\|u^n_0\|_{B^{s}_{p,r}}+\|v^n_0\|_{B^{s}_{p,r}})}
{1-2C t(\|u^n_0\|_{B^{s}_{p,r}}+\|v^n_0\|_{B^{s}_{p,r}})}.
$$
Then $(u^n, v^n)_{n\in\bar{\mathbb{N}}}$ is bounded in $L^{\infty}([0,T]; B^s_{p,r}\times B^s_{p,r})$.
As in the proof of the above theorem, we can decompose $u^n$ into $u^n=y^n+z^n$ and $v^n$ into $v^n=g^n+f^n$ such that
\begin{equation*}
    \left\{\begin{array}{l}
    y_t^n+G^ny_x^n=F^{\infty},
    \\
    g_t^n+G^ng_x^n=H^{\infty},
    \\
    y^n|_{t=0}=u_0^{\infty},
    \\
     g^n|_{t=0}=v_0^{\infty},
    \end{array}\right.
  \text{and}\quad
    \left\{\begin{array}{l}
    z_t^n+G^nz_x^n=F^n-F^{\infty},
     \\
     f_t^n+G^nf_x^n=H^n-H^{\infty},
     \\
     z^n|_{t=0}=u_0^n-u_0^{\infty},
     \\
      f^n|_{t=0}=v_0^n-v_0^{\infty}.
    \end{array}\right.
\end{equation*}
Obviously we have
\begin{equation}\label{eq326}
\begin{split}
\|G^n\|_{B^{s}_{p,r}}&=\|2u^n+v^n\|_{B^{s}_{p,r}}\leq C(\|u^n\|_{B^s_{p,r}}+\|v^n\|_{B^s_{p,r}})
\\
\|G^n-G^{\infty}\|_{B^{s-1}_{p,r}}&=\|2(u^n-u^{\infty})+(v^n-v^{\infty})\|_{B^{s-1}_{p,r}}\leq C(\|u^n-u^{\infty}\|_{B^{s-1}_{p,r}}+\|v^n-v^{\infty}\|_{B^{s-1}_{p,r}}.
\end{split}
\end{equation}
We have already known $(u^n,v^n) \rightarrow (u^{\infty}, v^{\infty})$ in $L^{\infty}([0,T]; B^{s-1}_{p,r}\times B^{s-1}_{p,r})$.
At the same time, by \eqref{eq326},
$G^n$ satisfy the condition of Lemma \ref{continuity}. This leads to that $(y^n, g^n)\rightarrow (y^{\infty}, g^{\infty})$ in $C([0,T];B^{s-1}_{p,r}\times B^{s-1}_{p,r})$ if $r<\infty$.

According to Lemma \ref{priori estimate}, we have for all $n\in\mathbb{N}$ and $t\in[0,T]$,
\begin{equation}\label{eq327}
\begin{split}
\|z^n(t)\|_{B^{s-1}_{p,r}}
\leq e^{C\int_0^t \|G_x^n\|_{B^{s-1}_{p,r}} dt'}\Big(\|u_0^n-u_0^{\infty}\|_{B^{s-1}_{p,r}}+\int_0^t e^{-C\int_0^{t'} \|G_x^n\|_{B^{s-1}_{p,r}} dt''}
\|F^n-F^{\infty}\|_{B^{s-1}_{p,r}}dt'\Big),
\\
\|f^n(t)\|_{B^{s-1}_{p,r}}
\leq e^{C\int_0^t \|G_x^n\|_{B^{s-1}_{p,r}} dt'}\Big(\|v_0^n-v_0^{\infty}\|_{B^{s-1}_{p,r}}+\int_0^t e^{-C\int_0^{t'} \|G_x^n\|_{B^{s-1}_{p,r}} dt''}
\|H^n-H^{\infty}\|_{B^{s-1}_{p,r}}dt'\Big).
\end{split}
\end{equation}
Similar to the calculations in equations \eqref{eq319} and \eqref{eq320}, we can easily deduce that
\begin{equation}\label{eq328}
\begin{split}
\|G_x^n\|_{B^{s-1}_{p,r}}&=\|2u_x^n+v_x^n\|_{B^{s-1}_{p,r}}\leq C(\|u^n\|_{B^s_{p,r}}+\|v^n\|_{B^s_{p,r}}),
\\
\|F^{n}-F^{\infty}\|_{B^{s-1}_{p,r}}&\leq C(\|u^{n}\|_{B^{s}_{p,r}}+\|v^{n}\|_{B^{s}_{p,r}}+\|u^{\infty}\|_{B^{s}_{p,r}}+\|v^{\infty}\|_{B^{s}_{p,r}})
\\
&\times(\|u^{n}-u^{\infty}\|_{B^{s-1}_{p,r}}+\|v^{n}-v^{\infty}\|_{B^{s-1}_{p,r}}),
\\
\|H^{n}-H^{\infty}\|_{B^{s-1}_{p,r}}&\leq C(\|u^{n}\|_{B^{s}_{p,r}}+\|v^{n}\|_{B^{s}_{p,r}}+\|u^{\infty}\|_{B^{s}_{p,r}}+\|v^{\infty}\|_{B^{s}_{p,r}})
\\
&\times(\|u^{n}-u^{\infty}\|_{B^{s-1}_{p,r}}+\|v^{n}-v^{\infty}\|_{B^{s-1}_{p,r}}).
\end{split}
\end{equation}
Plugging \eqref{eq328} into \eqref{eq327}, and using the boundedness of $(u^n, u^n)$ and $(u^{\infty}, u^{\infty})$ in space $C([0,T];B^{s}_{p,r}\times B^{s}_{p,r})$, we obtain
\begin{equation*}
\begin{split}
\|z^n(t)\|_{B^{s-1}_{p,r}}+\|f^n(t)\|_{B^{s-1}_{p,r}}\leq &C\big(\|u_0^{n}-u_0^{\infty}\|_{B^{s-1}_{p,r}}+\|v_0^{n}-v_0^{\infty}\|_{B^{s-1}_{p,r}}
\\
&+\int_0^{t}(\|u^{n}-u^{\infty}\|_{B^{s-1}_{p,r}}+\|v^{n}-v^{\infty}\|_{B^{s-1}_{p,r}})dt'\big).
\end{split}
\end{equation*}
Noticing that $(y^{\infty},g^{\infty})=(u^{\infty}, v^{\infty})$ and $(z^{\infty}, f^{\infty})=(0, 0)$, we have
\begin{equation*}\label{eq325}
\begin{split}
\|z^n(t)\|_{B^{s-1}_{p,r}}&+\|f^n(t)\|_{B^{s-1}_{p,r}}\leq C\big(\|u_0^{n}-u_0^{\infty}\|_{B^{s-1}_{p,r}}+\|v_0^{n}-v_0^{\infty}\|_{B^{s-1}_{p,r}}
\\
&+\int_0^{t}(\|y^{n}-y^{\infty}\|_{B^{s-1}_{p,r}}+\|g^{n}-g^{\infty}\|_{B^{s-1}_{p,r}}+\|z^n\|_{B^{s-1}_{p,r}}+\|f^n\|_{B^{s-1}_{p,r}})dt'\big).
\end{split}
\end{equation*}
Appling Gronwall's inequality yields that
\begin{equation*}
   \|z^n(t)\|_{B^{s-1}_{p,r}}+\|f^n(t)\|_{B^{s-1}_{p,r}}
    \leq e^{Ct}\Big(\|u_0^{n}-u_0^{\infty}\|_{B^{s-1}_{p,r}}+\|v_0^{n}-v_0^{\infty}\|_{B^{s-1}_{p,r}}+\int_0^t e^{-Ct}\|y^{n}-y^{\infty}\|_{B^{s-1}_{p,r}}+\|g^{n}-g^{\infty}\|_{B^{s-1}_{p,r}}dt'\Big).
\end{equation*}
Therefore when $r<\infty,\ (z^n, f^n)\rightarrow (0,0)$ in $C([0,T];B^{s-1}_{p,r}\times B^{s-1}_{p,r})$, and thus $(u^n, v^n)\rightarrow (u^{\infty}, v^{\infty})$ in $C([0,T]; B^{s-1}_{p,r}\times B^{s-1}_{p,r})$.

As for the case $r=\infty$, we have weak continuity. In fact,
for fixed $\phi\in B^{-(s-1)}_{p',1}$, we write
$$\langle u^n(t)-u^{\infty}(t),\phi\rangle=\langle u^n(t)-u^{\infty}(t),S_j\phi\rangle+\langle u^n(t)-u^{\infty}(t),\phi-S_j\phi\rangle.$$
By duality, we have
\begin{equation*}
    |\langle u^n(t)-u^{\infty}(t),\phi\rangle|\leq \|u^n(t)-u^{\infty}(t)\|_{B^{s-2}_{p,\infty}}\|S_j\phi\|_{B^{2-s}_{p',1}}+\|u^n(t)-u^{\infty}(t)\|_{B^{s-1}_{p,\infty}}\|\phi-S_j\phi\|_{B^{-(s-1)}_{p',1}}.
\end{equation*}
Using the fact that $u^n\rightarrow u^{\infty}$ in $L^{\infty}([0,T];B^{s-2}_{p,\infty})$, and $S_j \phi\rightarrow \phi$ in $B^{-(s-1)}_{p',1}$ and $(u^n)_{n\in\bar{\mathbb{N}}}$ is bounded in $L^{\infty}([0,T];B^{s-1}_{p,r})$, it is now simple to conclude that $\langle u^n(t)-u^{\infty}(t),\phi\rangle\rightarrow 0$ uniformly on $[0,T]$.
For the function $v$, the same result is obtained by calculation similar to $u$.
\end{proof}

\section{Blow-up criterion and global existence}

\subsection{Blow-up criterion}

In the section, we will establish a global existence result for the Cauchy problem of \eqref{eq1} in Besov spaces. First, we state a blow-up criterion for \eqref{eq31}. In what follows, we let $T^{*}$
be the lifespan of thesolution of \eqref{eq1} with the initial data $(u_0, v_0)$ as the supremum of positive times $T$ such that \eqref{eq1} has a solution $(u, v)\in E^s_{p,r} \times E^s_{p,r}$
on $[0, T]\times \mathbb{R}$. We have the following result.

\begin{theo}\label{theoxx}
Let $u_{0}, v_{0}\in B^{s}_{p,r} \times B^{s}_{p,r}$ with $(s,p,r)$  satisfies $s> \max\{2, \frac{1}{p}+\frac{3}{2}\}$ or $(s=2, p=2, r=2)$, and let $T^{*}$ be the maximal existence
time of the corresponding solution $(u,v)$ to (3.1). Then $(u,v)$ blows up in finite time $T^{*}$ if and only if
$$\int_{0}^{T^{*}}(\|u_x\|_{L^\infty}+\|v_x\|_{L^\infty}+\|u\|_{L^\infty}+\|v\|_{L^\infty})dt^{'}=\infty.$$
\end{theo}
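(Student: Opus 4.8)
The plan is to establish the nontrivial direction — that finiteness of $T^{*}$ forces the integral to diverge — in contrapositive form, the converse being immediate. For the converse: if $T^{*}=\infty$, then on each finite interval $[0,T]$ one has $(u,v)\in C([0,T];B^{s}_{p,r}\times B^{s}_{p,r})$, and since $s-1>\frac1p$ both $B^{s}_{p,r}$ and $B^{s-1}_{p,r}$ embed into $L^{\infty}$ (Proposition 2.3(5)), so the integrand is bounded on $[0,T]$ and the integral is finite there; hence divergence of the integral implies $T^{*}<\infty$. For the main direction, suppose toward a contradiction that $T^{*}<\infty$ while
$$I:=\int_{0}^{T^{*}}\big(\|u_x\|_{L^\infty}+\|v_x\|_{L^\infty}+\|u\|_{L^\infty}+\|v\|_{L^\infty}\big)\,dt<\infty,$$
write $W(t)$ for the integrand and $A(t)=\|u(t)\|_{B^{s}_{p,r}}+\|v(t)\|_{B^{s}_{p,r}}$. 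It suffices to show $A$ stays bounded on $[0,T^{*})$: since the proof of Theorem \ref{theorem} shows that data of size $\rho$ produce a solution with lifespan $\geq c/\rho$ for a universal $c>0$, restarting \eqref{eq31} at some $t_0<T^{*}$ with $t_0+c/A(t_0)>T^{*}$ and gluing (by the uniqueness part of Theorem \ref{theorem}) yields a solution in $E^{s}_{p,r}\times E^{s}_{p,r}$ on $[0,t_0+c/A(t_0)]\supsetneq[0,T^{*}]$, contradicting the maximality of $T^{*}$.

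To bound $A$, I would apply Lemma \ref{new estimate} to the two transport equations in \eqref{eq31}, with drift $G=2u+v$ and the choice $p_1=p$, $p_2=\infty$, obtaining for $t\in[0,T^{*})$
$$\|u(t)\|_{B^{s}_{p,r}}\leq\|u_0\|_{B^{s}_{p,r}}+C\int_0^t\Big(\|G_x\|_{L^\infty}\|u\|_{B^{s}_{p,r}}+\|G_x\|_{B^{s-1}_{p,r}}\|u_x\|_{L^\infty}+\|F\|_{B^{s}_{p,r}}\Big)dt'$$
and the analogous inequality for $v$ with $H$. Since $\|G_x\|_{L^\infty}\leq C(\|u_x\|_{L^\infty}+\|v_x\|_{L^\infty})\leq CW$ and $\|G_x\|_{B^{s-1}_{p,r}}\leq C(\|u\|_{B^{s}_{p,r}}+\|v\|_{B^{s}_{p,r}})=CA$, the first two terms are already $\lesssim WA$, so everything reduces to proving $\|F\|_{B^{s}_{p,r}}+\|H\|_{B^{s}_{p,r}}\leq C\,W(t)\,A(t)$. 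As $p$ is an $S^{-2}$-multiplier and $\partial_xp$ an $S^{-1}$-multiplier, Proposition 2.3(7) reduces this to estimating the arguments of the convolutions in $B^{s-2}_{p,r}$ (resp. $B^{s-1}_{p,r}$ for the pieces carrying a derivative). For the terms of type $(2u_x+v_x)u$, $(2u_x+v_x)v$ and $v_{xx}v_x=\frac12\partial_x(v_x^2)$ this follows from the product law Lemma \ref{product}(1) (valid since $s-2>0$ when $s>2$) together with $B^{s-1}_{p,r}\hookrightarrow L^\infty$.

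The delicate terms are the mixed products $v_xu_{xx}$ and $v_{xx}u_x$ — a $B^{s-1}_{p,r}$ factor times a $B^{s-2}_{p,r}$ factor for which one cannot afford $\|u_{xx}\|_{L^\infty}$ or $\|v_{xx}\|_{L^\infty}$. Here I would use the Bony decomposition $v_xu_{xx}=T_{v_x}u_{xx}+T_{u_{xx}}v_x+R(v_x,u_{xx})$, bounding $\|T_{v_x}u_{xx}\|_{B^{s-2}_{p,r}}\leq C\|v_x\|_{L^\infty}\|u_{xx}\|_{B^{s-2}_{p,r}}\leq C\|v_x\|_{L^\infty}\|u\|_{B^{s}_{p,r}}$ by Proposition 2.6, and exploiting the elementary gain $\|u_{xx}\|_{B^{-1}_{\infty,\infty}}=\sup_j 2^{-j}\|\Delta_j\partial_x^2u\|_{L^\infty}\leq C\sup_j\|\Delta_ju_x\|_{L^\infty}\leq C\|u_x\|_{L^\infty}$, so that Propositions 2.6 and 2.7 (the latter with the $B^{-1}_{\infty,\infty}$ factor, legitimate since $s-2>0$) give $\|T_{u_{xx}}v_x\|_{B^{s-2}_{p,r}}+\|R(v_x,u_{xx})\|_{B^{s-2}_{p,r}}\leq C\|u_x\|_{L^\infty}\|v_x\|_{B^{s-1}_{p,r}}\leq C\|u_x\|_{L^\infty}\|v\|_{B^{s}_{p,r}}$; the term $v_{xx}u_x$ is symmetric. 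In the endpoint case $s=2$, $p=r=2$, all these products lie in $L^2$ and a single use of Hölder's inequality (e.g. $\|v_xu_{xx}\|_{L^2}\leq\|v_x\|_{L^\infty}\|u_{xx}\|_{L^2}$) suffices. Collecting the pieces gives $\|F\|_{B^{s}_{p,r}}+\|H\|_{B^{s}_{p,r}}\leq C\,W(t)\,A(t)$, hence $A(t)\leq A(0)+C\int_0^tW(t')A(t')\,dt'$, and Gronwall's inequality yields $A(t)\leq A(0)e^{CI}<\infty$ on $[0,T^{*})$, which closes the contradiction.

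I expect the principal obstacle to be precisely the two mixed quadratic terms $v_xu_{xx}$, $v_{xx}u_x$: any crude product estimate brings in the uncontrolled norms $\|u_{xx}\|_{L^\infty}$, $\|v_{xx}\|_{L^\infty}$, so one must go through the paraproduct/remainder splitting together with the substitution $\|u_{xx}\|_{B^{-1}_{\infty,\infty}}\lesssim\|u_x\|_{L^\infty}$, and it is exactly to keep the remainder exponent $s-2$ positive and to retain $B^{s-1}_{p,r}\hookrightarrow L^\infty$ that one restricts to $s>\max\{2,\frac1p+\frac32\}$ (the lone endpoint $(s,p,r)=(2,2,2)$ being handled separately by the plain $L^2$ computation above).
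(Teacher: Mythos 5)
Your proposal is correct and follows essentially the same route as the paper: the a priori bound via Lemma \ref{new estimate} with $p_1=p$, $p_2=\infty$, the Bony splitting of the mixed terms $v_xu_{xx}$, $v_{xx}u_x$ with the key substitution $\|u_{xx}\|_{B^{-1}_{\infty,\infty}}\lesssim\|u_x\|_{L^\infty}$ to avoid second-derivative sup norms, Gronwall, a separate $H^2$/H\"older computation for the endpoint $(s,p,r)=(2,2,2)$, and continuation past $T^*$ by the local theory. Your minor variations (treating $v_{xx}v_x$ as $\tfrac12\partial_x(v_x^2)$ and estimating the remainder against $B^{-1}_{\infty,\infty}\times B^{s-1}_{p,r}$ rather than $B^{0}_{\infty,\infty}\times B^{s-2}_{p,r}$) are cosmetic, and your brief treatment of the easy direction is at the same level of detail as the paper's.
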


\textbf{Proof:}
Firstly, when $s> \max\{2, \frac{1}{p}+\frac{3}{2}\}$ and $1\leq p, r\leq \infty$,  applying Lemma 2.9 with $p_1=p$, we have $p_2=\infty$ and
\begin{equation}\label{eq4.1}
\|u\|_{B^{s}_{p,r}}\leq \|u_{0}\|_{B^{s}_{p,r}}+C\int_{0}^{t}(\|2u_x+v_{x}\|_{L^\infty} \|u\|_{B^{s}_{p,r}}+\|u_x\|_{L^\infty} \|2u_x+v_{x}\|_{B^{s-1}_{p,r}}+\|F\|_{B^{s}_{p,r}})dt^{'},
\end{equation}
In \eqref{eq4.1}, we have
\begin{equation}
 \left\{
  \begin{array}{l}
\|2u_x+v_{x}\|_{L^\infty} \|u\|_{B^{s}_{p,r}}\leq C(\|u_x\|_{L^\infty}+\|v_x\|_{L^\infty} )\|u\|_{B^{s}_{p,r}},\\
\|u_x\|_{L^\infty} \|2u_x+v_{x}\|_{B^{s-1}_{p,r}}\leq C\|u_x\|_{L^\infty}(\|u\|_{B^{s}_{p,r}}+\|v\|_{B^{s}_{p,r}}).\\
  \end{array}
 \right.
\end{equation}
According to Properties 2.6 and 2.7, since $L^{\infty}\hookrightarrow B_{\infty,\infty}^{0}$, we can get
\begin{equation}\label{eq4.3}
 \left\{
  \begin{array}{l}
 \|T_{u_{x}}v_{xx}\|_{B^{s-2}_{p,r}}\leq C\|u_{x}\|_{L^{\infty}}\|v_{xx}\|_{B^{s-2}_{p,r}}\leq C\|u_{x}\|_{L^{\infty}}\|v\|_{B^{s}_{p,r}},
 \\[1ex]
 \|T_{v_{xx}}u_{x}\|_{B^{s-2}_{p,r}}\leq C\|v_{xx}\|_{B^{-1}_{\infty,\infty}}\|u_{x}\|_{B^{s-1}_{p,r}}\leq C\|v_{x}\|_{B^{0}_{\infty,\infty}}\|u\|_{B^{s}_{p,r}}
 \leq\|v_{x}\|_{L^{\infty}}\|u\|_{B^{s}_{p,r}},
 \\[1ex]
 \|R(u_{x},v_{xx})\|_{B^{s-2}_{p,r}}\leq C\|u_{x}\|_{B^{0}_{\infty,\infty}}\|v_{xx}\|_{B^{s-2}_{p,r}}\leq C\|u_{x}\|_{L^{\infty}}\|v\|_{B^{s}_{p,r}}.\\[1ex]
  \end{array}
 \right.
\end{equation}
Applying Bony decomposition and \eqref{eq4.3}, we have
\begin{equation}\label{eq4.4} \|u_{x}v_{xx}\|_{B^{s-2}_{p,r}}=\|T_{u_{x}}v_{xx}+T_{v_{xx}}u_{x}+R(u_{x},v_{xx})\|_{B^{s-2}_{p,r}}
 \leq \|u_x\|_{L^\infty} \|v\|_{B^{s}_{p,r}}+\|v_x\|_{L^\infty}\|u\|_{B^{s}_{p,r}}.
\end{equation}
Similarly, we get
$$
\|v_{x}u_{xx}\|_{B^{s-2}_{p,r}}\leq \|v_x\|_{L^\infty} \|u\|_{B^{s}_{p,r}}+\|u_x\|_{L^\infty}\|v\|_{B^{s}_{p,r}}.
$$
According to Lemma 2.5, we have
\begin{equation}
\begin{split}
\|-3(2u_x+v_x)u\|_{B^{s-2}_{p,r}}&\leq\|-3(2u_x+v_x)\|_{L^\infty} \|u\|_{B^{s-2}_{p,r}}+\|-3(2u_x+v_x)\|_{B^{s-2}_{p,r}} \|u\|_{L^\infty}
\\
&\leq(\|u_x\|_{L^\infty}+\|v_x\|_{L^\infty})\|u\|_{B^{s}_{p,r}}+(\|u\|_{B^{s}_{p,r}}+\|v\|_{B^{s}_{p,r}}) \|u\|_{L^\infty}.
\end{split}
\end{equation}
Hence, using inequalities (4.4) and (4.5) yields
\begin{equation}
\begin{split}
\|F\|_{B^{s}_{p,r}}&=\|-3(2u_x+v_x)u+v_xu_{xx}-v_{xx}u_{x}\|_{B^{s-2}_{p,r}}
\\
&\leq (\|u_x\|_{L^\infty}+\|v_x\|_{L^\infty} )\|u\|_{B^{s}_{p,r}}+(\|u\|_{B^{s}_{p,r}}+\|v\|_{B^{s}_{p,r}})\|u\|_{L^\infty}
\\
&+(\|u_x\|_{L^\infty}\|v\|_{B^{s}_{p,r}}+\|v_x\|_{L^\infty}\|u\|_{B^{s}_{p,r}})
\\
&\leq(\|u_x\|_{L^\infty}+\|v_x\|_{L^\infty} +\|u\|_{L^\infty})(\|u\|_{B^{s}_{p,r}}+\|v\|_{B^{s}_{p,r}}).
\end{split}
\end{equation}
Substituting (4.2) and (4.6) into (4.1), we have
\begin{equation}
\|u\|_{B^{s}_{p,r}}\leq \|u_{0}\|_{B^{s}_{p,r}}+C\int_{0}^{t}(\|u_x\|_{L^\infty}+\|v_x\|_{L^\infty} +\|u\|_{L^\infty})(\|u\|_{B^{s}_{p,r}}+\|v\|_{B^{s}_{p,r}})dt^{'}.
\end{equation}
Similarly,
\begin{equation}
\|v\|_{B^{s}_{p,r}}\leq \|v_{0}\|_{B^{s}_{p,r}}+C\int_{0}^{t}(\|u_x\|_{L^\infty}+\|v_x\|_{L^\infty} +\|v\|_{L^\infty})(\|u\|_{B^{s}_{p,r}}+\|v\|_{B^{s}_{p,r}})dt^{'}.
\end{equation}
Combining (4.7) and (4.8), we get
\begin{equation*}
\begin{split}
\|u\|_{B^{s}_{p,r}}+\|v\|_{B^{s}_{p,r}}&\leq (\|u_{0}\|_{B^{s}_{p,r}}+\|v_{0}\|_{B^{s}_{p,r}})
\\
&+C\int_{0}^{t}\Big((\|u_x\|_{L^\infty}+\|v_x\|_{L^\infty} +\|u\|_{L^\infty}+\|v\|_{L^\infty})(\|u\|_{B^{s}_{p,r}}+\|v\|_{B^{s}_{p,r}})\Big)dt^{'}.
\end{split}
\end{equation*}
Applying Gronwall's inequality yields that
\begin{equation}\label{eq4.10}
\begin{split}
\|u\|_{B^{s}_{p,r}}+\|v\|_{B^{s}_{p,r}}\leq (\|u_{0}\|_{B^{s}_{p,r}}+\|v_{0}\|_{B^{s}_{p,r}})e^{\int_{0}^{t}(\|u_x\|_{L^\infty}+\|v_x\|_{L^\infty} +\|u\|_{L^\infty}+\|v\|_{L^\infty})dt^{'}}.
\end{split}
\end{equation}

Secondly, when $s=2, p=2, r=2$, note that $B^2_{2,2}=H^2$, similar to the first proof, we get
\begin{equation*}
\|u\|_{H^{2}}\leq \|u_{0}\|_{H^{2}}+c\int_{0}^{t}(\|2u_x+v_{x}\|_{L^\infty} \|u\|_{H^{2}}+\|u_x\|_{L^\infty} \|2u_x+v_{x}\|_{H^{1}}+\|F\|_{H^{2}})dt^{'},
\end{equation*}
where
\begin{equation*}
\begin{split}
\|F\|_{H^{2}}&=\|-3(2u_x+v_x)u+v_xu_{xx}-v_{xx}u_{x}\|_{L^{2}}
\\
&\leq C(\|(2u_x+v_x)u\|_{L^2} +\|v_xu_{xx}\|_{L^{2}}+\|v_{xx}u_{x}\|_{L^2})
\\
&\leq C(\|2u_x+v_x\|_{L^2}\|u\|_{L^{\infty}}+\|v_x\|_{L^\infty}\|u_{xx}\|_{L^{2}}+\|u_x\|_{L^\infty}\|v_{xx}\|_{L^{2}} )
\\
&\leq C(\|u\|_{L^{\infty}}+\|u_x\|_{L^\infty}+\|v_x\|_{L^\infty})(\|u\|_{H^{2}}+\|v\|_{H^{2}}).
\end{split}
\end{equation*}
An inequality similar to \eqref{eq4.10} can be obtained by proofs similar to the firstly.

If $T^*$ is finite, and $\int_{0}^{T^{*}}(\|u_x\|_{L^\infty}+\|v_x\|_{L^\infty}+\|u\|_{L^\infty}+\|v\|_{L^\infty})dt^{'}<\infty$,
then $(u, v)\in L^{\infty}([0,T^*);B^s_{p,r}\times B^s_{p,r})$, which
contradicts the assumption that $T^*$ is the maximal existence time.

On the other hand, by Theorem \eqref{theorem} and the fact that $B^s_{p,r}\hookrightarrow C^{0,1}$,
if $\int_{0}^{T^{*}}(\|u_x\|_{L^\infty}+\|v_x\|_{L^\infty}+\|u\|_{L^\infty}+\|v\|_{L^\infty})dt^{'}=\infty$, then $(u, v)$ must blow up in finite time.

\textbf{Remark 2.} The blow-up criterion Theorem \ref{theoxx} obtained in this paper is an improvement of \cite{4} and \cite{5}.
 The conditions required for their results are that there exists $M>0$ such that $\|u\|_{L^\infty}+\|v\|_{L^\infty}+\|u_x\|_{L^\infty}+\|v_x\|_{L^\infty}+\|u_{xx}\|_{L^\infty}+\|v_{xx}\|_{L^\infty}\leq M$
 for $t\in [0, T)$.

\subsection{Global existence}
First we prove a conserved quantity for \eqref{eq1}.

\begin{theo}
If $m_0, n_0\in H^{s}(\mathbb{R}), s>\frac{1}{2},$ then as long as the solution $m(t,x), n(t,x)$  given by (1.1) exists, we have
\begin{equation}
\int_{\mathbb{R}} m+n dx^{'} =\int_{\mathbb{R}}m_0+n_0 dx^{'}.
\end{equation}
\end{theo}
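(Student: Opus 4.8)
The plan is to show that adding the first two equations of \eqref{eq1} produces a \emph{pure divergence} law, after which integration in $x$ annihilates the right‑hand side. Write $G=2u+v$, so that $2u_x+v_x=G_x$. Adding the $m$–equation and the $n$–equation of \eqref{eq1} gives
$$(m+n)_t+G(m+n)_x+G_x(3m+2n)=0,$$
and since $G(m+n)_x=\partial_x\big(G(m+n)\big)-G_x(m+n)$ this becomes
$$(m+n)_t+\partial_x\big(G(m+n)\big)+G_x(2m+n)=0.$$
The whole problem is thus reduced to recognizing the ``non‑conservative remainder'' $G_x(2m+n)=(2u_x+v_x)(2m+n)=4u_xm+2v_xm+2u_xn+v_xn$ as a perfect $x$–derivative.

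For this I would use the three elementary identities, valid because $m=u-u_{xx}$ and $n=v-v_{xx}$:
$$2u_xm=\partial_x(u^2-u_x^2),\qquad 2v_xn=\partial_x(v^2-v_x^2),\qquad v_xm+u_xn=\partial_x(uv-u_xv_x),$$
the last of which is the crucial one and follows at once from $v_xm+u_xn=(v_xu+u_xv)-(v_xu_{xx}+u_xv_{xx})=\partial_x(uv)-\partial_x(u_xv_x)$. Substituting these into $G_x(2m+n)=2\,(2u_xm)+\big(2v_xm+2u_xn\big)+v_xn$ yields the conservation law
$$(m+n)_t+\partial_x\Big[(2u+v)(m+n)+2(u^2-u_x^2)+\tfrac12(v^2-v_x^2)+2(uv-u_xv_x)\Big]=0.$$
Integrating over $\mathbb{R}$ and using that the bracketed flux vanishes at $x=\pm\infty$ then gives $\frac{d}{dt}\int_{\mathbb{R}}(m+n)\,dx=0$, which is the assertion.

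Two analytic points must be addressed. First, the decay needed to discard the boundary terms and to differentiate under the integral: since $m_0,n_0\in H^s$ with $s>\tfrac12$, we have $u_0=(1-\partial_x^2)^{-1}m_0$ and $v_0=(1-\partial_x^2)^{-1}n_0$ in $H^{s+2}$, and the solution furnished by Theorem~\ref{theorem} (applied with $p=r=2$ and regularity index $s+2>\tfrac52$) satisfies $u,v\in C([0,T^*);H^{s+2})$, hence $m,n\in C([0,T^*);H^s)\cap C^1([0,T^*);H^{s-1})$; because $s>\tfrac12$ one has $H^{s+2}\hookrightarrow C^2_0(\mathbb{R})$ and $H^s\hookrightarrow C_0(\mathbb{R})$, so $u,u_x,u_{xx},v,v_x,v_{xx},m,n$ and all products in the flux tend to $0$ at $\pm\infty$, which legitimizes every integration by parts. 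Second, for $\int_{\mathbb{R}}(m+n)\,dx$ to make sense one needs $m+n\in L^1$; this is implicit in the statement and propagates in time if one also asks $m_0,n_0\in L^1$, because $m$ and $n$ are transported along the flow of $G$ with amplification factors $e^{-3\int_0^tG_x\,d\tau}$, $e^{-2\int_0^tG_x\,d\tau}$ while the flow has Jacobian $e^{\int_0^tG_x\,d\tau}$, giving $\|m(t)\|_{L^1}\le e^{2\int_0^t\|G_x(\tau)\|_{L^\infty}d\tau}\|m_0\|_{L^1}$ and similarly for $n$ (here $G_x\in L^\infty$ since $G\in H^{s+2}\hookrightarrow W^{1,\infty}$).

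I expect the only genuine obstacle to be the algebraic one: seeing that the leftovers from the two individually non‑conservative transport equations — and in particular the cross term $2(v_xm+u_xn)$ — recombine into an exact derivative via $v_xm+u_xn=\partial_x(uv-u_xv_x)$. This identity is exactly the structural reason the coupled Popowicz system still carries the CH/DP–type Casimir $\int(m+n)$; everything downstream of it is routine.
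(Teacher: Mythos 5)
Your proof is correct and follows essentially the same route as the paper: the paper likewise reduces $\frac{d}{dt}\int_{\mathbb{R}}(m+n)\,dx$ to $-\int_{\mathbb{R}}(2u_x+v_x)(2m+n)\,dx$ by one integration by parts and then observes that, after substituting $m=u-u_{xx}$, $n=v-v_{xx}$, the integrand is an exact $x$-derivative, so your explicit flux is just the pointwise form of that computation. Your added care about decay (via $H^{s+2}\hookrightarrow C^2_0$) and about the $L^1$ sense of $\int(m+n)\,dx$ sharpens the paper's brief ``arguing by density'' justification but does not change the argument.
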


\textbf{Proof:} Arguing by density, it suffices to consider the case where $u, v\in C_{0}^{\infty}(\mathbb{R})$.
Using integration by parts, we deduce that
\begin{equation}
\begin{split}
\frac{d}{dt}\int_{\mathbb{R}} m+n dx^{'}=\int_{\mathbb{R}} m_{t}+n_{t} dx^{'}&=\int_{\mathbb{R}}-(2u+v)(m_x+n_x)-(2u_x+v_x)(3m+2n)dx^{'}
\\
&=\int_{\mathbb{R}}-(2u_x+v_x)(2m+n)dx^{'}
\\
&=\int_{\mathbb{R}}-(2u_x+v_x)(2u-2u_{xx}+v-v_{xx})dx^{'}
\\
&=\int_{\mathbb{R}}-2(uv)_{x}+2(u_xv_x)_xdx^{'}=0.
\end{split}
\end{equation}

Let us consider the ordinary differential equation:
\begin{equation}\label{eq412}
  \left\{\begin{array}{l}
  q_t(t,x)=(2u+v)(t,q(t,x)),\quad t\in[0,T),  \\
  q(0,x)=x,\quad x\in\mathbb{R}.
  \end{array}\right.
\end{equation}
If $(m, n)\in B^s_{p,r} \times B^s_{p,r}$ with $(s,p,r)$ being as in Theorem \ref{theorem}, then $2u+v\in C([0,T); C^{0,1})$. According to the classical results in the theory of ordinary differential equations, we can easily infer that \eqref{eq412} have a unique solution $q\in C^1([0,T)\times\mathbb{R};\mathbb{R})$ such that the map $q(t,\cdot)$ is an increasing diffeomorphism of $\mathbb{R}$ with
$$
q_x(t,x)=\exp\Big(\int_0^t (2u_x+v_x)(t',q(t',x)\Big)dt'>0,\quad \forall (t,x)\in[0,T)\times\mathbb{R}.
$$

\begin{theo}
Let $m_0,n_0\in B^{s}_{p,r}\bigcap L^{1}$ with $(s,p,r)$ satisfies Theorem \ref{theoxx}, and $ m_0 \geq 0, n_0\geq 0$.
Then the corresponding solution $m,n$ of \eqref{eq1} exists globally in time.
\end{theo}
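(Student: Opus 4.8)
The plan is to exploit the transport structure of $m$ and $n$ along the flow of the velocity $2u+v$, combine it with the conserved quantity $\int_{\mathbb R}(m+n)\,dx$ obtained above, and then invoke the blow-up criterion of Theorem~\ref{theoxx}. Let $q$ be the flow defined in \eqref{eq412}; it is a $C^{1}$ increasing diffeomorphism of $\mathbb R$ with $q_x(t,x)=\exp\big(\int_0^t(2u_x+v_x)(t',q(t',x))\,dt'\big)>0$. Differentiating $t\mapsto m(t,q(t,x))$ and $t\mapsto n(t,q(t,x))$, using the two equations of \eqref{eq1} and the identity $\partial_t q_x=(2u_x+v_x)(t,q)\,q_x$, one checks
\[
\frac{d}{dt}\big(m(t,q(t,x))\,q_x(t,x)^{3}\big)=0,\qquad
\frac{d}{dt}\big(n(t,q(t,x))\,q_x(t,x)^{2}\big)=0,
\]
so that $m(t,q(t,x))\,q_x(t,x)^{3}=m_0(x)$ and $n(t,q(t,x))\,q_x(t,x)^{2}=n_0(x)$ on $[0,T^{*})$. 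Since $q_x>0$ and $m_0\ge 0$, $n_0\ge 0$, this gives at once $m(t,\cdot)\ge 0$ and $n(t,\cdot)\ge 0$ for every $t\in[0,T^{*})$.

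The second step turns the (sign-indefinite) conservation law into a uniform $L^{1}$ bound. The transport equations preserve the $L^{1}$ class: estimating $\tfrac{d}{dt}\|m(t)\|_{L^{1}}$ by integrating $\operatorname{sgn}(m)\,m_t$ by parts yields $\tfrac{d}{dt}\|m\|_{L^{1}}\le 2\|2u_x+v_x\|_{L^{\infty}}\|m\|_{L^{1}}$, and likewise for $n$, so $m(t),n(t)\in L^{1}$ on $[0,T^{*})$ (and then $u,v\in W^{2,1}$ by $u=p*m$, $v=p*n$). Hence the integration by parts in the proof of the conservation law is legitimate and $\int_{\mathbb R}\big(m(t)+n(t)\big)\,dx=\int_{\mathbb R}(m_0+n_0)\,dx$ for all $t\in[0,T^{*})$. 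Combining this with the positivity of $m$ and $n$ gives
\[
\|m(t)\|_{L^{1}}+\|n(t)\|_{L^{1}}=\int_{\mathbb R}(m_0+n_0)\,dx=:M_0,\qquad t\in[0,T^{*}).
\]

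Finally, since $u=p*m$, $v=p*n$ with $p(x)=\tfrac12 e^{-|x|}$, we have $\|p\|_{L^{\infty}}\le\tfrac12$ and $\|\partial_x p\|_{L^{\infty}}\le\tfrac12$, so Young's inequality gives $\|u\|_{L^{\infty}}+\|u_x\|_{L^{\infty}}\le\|m\|_{L^{1}}$, $\|v\|_{L^{\infty}}+\|v_x\|_{L^{\infty}}\le\|n\|_{L^{1}}$, whence
\[
\|u\|_{L^{\infty}}+\|v\|_{L^{\infty}}+\|u_x\|_{L^{\infty}}+\|v_x\|_{L^{\infty}}\le M_0\qquad\text{on }[0,T^{*}).
\]
If $T^{*}<\infty$ this forces $\int_0^{T^{*}}\big(\|u_x\|_{L^{\infty}}+\|v_x\|_{L^{\infty}}+\|u\|_{L^{\infty}}+\|v\|_{L^{\infty}}\big)\,dt'\le M_0T^{*}<\infty$, contradicting Theorem~\ref{theoxx}; hence $T^{*}=\infty$ and the solution is global. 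The only genuine input is the sign condition $m_0,n_0\ge 0$, which is precisely what converts the conserved integral of $m+n$ into uniform control of $\|m\|_{L^{1}}$ and $\|n\|_{L^{1}}$, and thus, via $u=p*m$ and $v=p*n$, into the a priori Lipschitz bound the blow-up criterion needs. The remaining points — rigorous justification of the characteristic identities, preservation of the $L^{1}$ class, and the integration by parts in the conservation law — are routine and, as in the proof of the conservation law, are handled by first treating $C_0^{\infty}$ data and passing to the limit; I do not expect any of them to be a real obstacle.
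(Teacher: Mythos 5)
Your proposal is correct and follows essentially the same route as the paper: sign preservation of $m,n$ along the characteristics of $2u+v$, the conserved quantity $\int_{\mathbb R}(m+n)\,dx$ turned into an $L^1$ bound, the kernel estimates for $p*m$, $p*n$ giving a uniform bound on $\|u\|_{L^\infty}+\|v\|_{L^\infty}+\|u_x\|_{L^\infty}+\|v_x\|_{L^\infty}$, and then the blow-up criterion of Theorem~\ref{theoxx}. Your version is in fact slightly cleaner (the identity $m(t,q)q_x^3=m_0$, the direct equality $\|m\|_{L^1}+\|n\|_{L^1}=\int(m_0+n_0)\,dx$ in place of the paper's Gronwall step, and the explicit justification that the $L^1$ class is preserved), but these are refinements of the same argument, not a different one.
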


\textbf{Proof:}
From \eqref{eq1}, we know
\begin{equation}
 \left\{
  \begin{array}{l}
 \frac{d}{dt}m(t,q(t,x))=-3\big((2u_x+v_x)m\big)(t,q(t,x)),\\[1ex]
\frac{d}{dt}n(t,q(t,x))=-2\big((2u_x+v_x)m\big)(t,q(t,x)),\\[1ex]
  \end{array}
 \right.
\end{equation}
hence
\begin{equation}\label{eq414}
\left\{
\begin{array}{l}
m(t,q(t,x))=m_{0}(x)e^{\int_{0}^{t}\big(-3(2u_x+v_x)(t^{'},q(t^{'},x))\big)dt^{'}},\\
n(t,q(t,x))=n_{0}(x)e^{\int_{0}^{t}\big(-2(2u_x+v_x)(t^{'},q(t^{'},x))\big)dt^{'}},\\
\end{array}
\right.
\end{equation}
which implies that $(m,n)$ doesn't change sign. That is, when $m_0\geq 0, n_0\geq 0$, we have $ m\geq 0, n\geq 0$.
Moreover, we get
\begin{equation}
\begin{split}
|u_{x}|=|p_{x}*m(t,x)|\leq |p_{x}|*m(t,x)=p*m(t,x)=u(t,x)\Rightarrow |u_x|_{L^\infty}\leq |u|_{L^\infty},
\\
\|u_x+v_x\|_{L^\infty}\leq \|u+v\|_{L^\infty}=\|p*(m+n)\|_{L^\infty}\leq C\|m+n\|_{L^1}= C\|m_0+n_0\|_{L^1}\overset{\Delta}{=}C_0.
\end{split}
\end{equation}

Using integration by parts, we deduce that
\begin{equation}
\begin{split}
\frac{d}{dt}\int_{\mathbb{R}} m dx&=\int_{\mathbb{R}} -\Big((2u+v)m_x+3(2u_x+v_x)m\Big) dx \\
&= \int_{\mathbb{R}} -2(2u_x+v_x) m dx=\int_{\mathbb{R}} -4(u_x+v_x) m dx
\\
&\leq 4\|u_x+v_x\|_{L^\infty}\|m\|_{L^1} \leq4C_0 \|m\|_{L^1}.
\end{split}
\end{equation}
Applying Gronwall's inequality yields that, for all $t\in [0,T)$,
\begin{equation}
\|m\|_{L^1}\leq \|m_0\|_{L^1} e^{4C_0T},
\end{equation}
similarly,
\begin{equation}
\|n\|_{L^1}\leq \|n_0\|_{L^1} e^{2C_0T}.
\end{equation}
Combining (4.15), (4.17) and (4.18), we get
\begin{equation*}
\begin{split}
\|u_{x}\|_{L^{\infty}}\leq\|u\|_{L^{\infty}}\leq \|p_{x}*m\|_{L^{\infty}}\leq C\|m\|_{L^{1}}\leq \|m_0\|_{L^1} e^{4C_0T},
\\
\|v_{x}\|_{L^{\infty}}\leq\|v\|_{L^{\infty}}\leq \|p_{x}*n\|_{L^{\infty}}\leq C\|n\|_{L^{1}}\leq \|n_0\|_{L^1} e^{2C_0T}.
\end{split}
\end{equation*}
Moreover, we have
\begin{equation*}
\|u_x\|_{L^\infty}+\|v_x\|_{L^\infty} +\|u\|_{L^\infty}+\|v\|_{L^\infty}\leq 2\|n_0\|_{L^1} e^{2C_0T}+2\|m_0\|_{L^1} e^{4C_0T}.
\end{equation*}
Combining Theorem 4.1 then allows us to complete the proof of Theorem 4.3.

\begin{theo}
Let $(m, n)$ be the corresponding local solution of \eqref{eq1}, if the initial data $m_0, n_0\in B^s_{p,r} \bigcap L^1$ with $(s,p,r)$ satisfies Theorem \ref{theoxx}, $m_0(x)$ and $n_0(x)$ are odd functions such that $m_0\leq 0$, $n_0\leq 0$ when $x\leq 0$, and $m_0\geq 0$, $n_0\geq 0$ when $x\geq 0$, then the solution $m(t,x), n(t,x)$ exists globally.
\end{theo}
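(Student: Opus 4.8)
The plan is to reduce global existence to the blow‑up criterion of Theorem~\ref{theoxx}: it suffices to prove that, under the oddness and sign hypotheses on $(m_0,n_0)$, the quantity $\|u_x\|_{L^\infty}+\|v_x\|_{L^\infty}+\|u\|_{L^\infty}+\|v\|_{L^\infty}$ stays bounded on the whole maximal interval $[0,T^*)$. The mechanism is a one‑sided (half‑line) mass law for $m$ and $n$ which, thanks to the boundary values $u(t,0)=v(t,0)=0$ forced by oddness, comes with a favorable sign.

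First I would show that oddness in $x$ is preserved. Since $p=\tfrac12 e^{|\cdot|}$ is even, the transformation sending $(u,v)$ to $\big((t,x)\mapsto -u(t,-x),\ (t,x)\mapsto -v(t,-x)\big)$ maps solutions of \eqref{eq31} to solutions of \eqref{eq31} (a direct check shows $F,H$ are odd in $x$ whenever $u,v$ are), so by the uniqueness part of Theorem~\ref{theorem} the solution issued from odd data stays odd; in particular $u(t,0)=v(t,0)=0$ and $m(t,\cdot)=u-u_{xx}$, $n(t,\cdot)=v-v_{xx}$ are odd. Applying uniqueness to the characteristic ODE \eqref{eq412} (whose vector field vanishes at $x=0$ because $2u(t,0)+v(t,0)=0$) gives $q(t,0)\equiv0$, so $q(t,\cdot)$ maps $[0,\infty)$ onto $[0,\infty)$ and $(-\infty,0]$ onto $(-\infty,0]$. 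Feeding this into the transport identities \eqref{eq414}, whose exponential factors are strictly positive, yields $m(t,\cdot)\geq0$ and $n(t,\cdot)\geq0$ on $[0,\infty)$ (and $\leq0$ on $(-\infty,0]$), hence $\|m(t)\|_{L^1}=2\int_0^\infty m\,dx$ and $\|n(t)\|_{L^1}=2\int_0^\infty n\,dx$.

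Next, set $M(t)=\int_0^\infty m(t,x)\,dx$ and $N(t)=\int_0^\infty n(t,x)\,dx$. Integrating the first two equations of \eqref{eq1} over $[0,\infty)$, integrating by parts, and using $(2u+v)(t,0)=0$ together with decay at $+\infty$ (arguing by density with odd $C_0^\infty$ data, as in the proof of the conservation law above), I obtain $M'(t)=-2\int_0^\infty(2u_x+v_x)m\,dx$ and $N'(t)=-\int_0^\infty(2u_x+v_x)n\,dx$. Writing $2m+n=(2u+v)-(2u_{xx}+v_{xx})$ and using $(2u_x+v_x)(2u+v)=\tfrac12\big((2u+v)^2\big)_x$ and $(2u_x+v_x)(2u_{xx}+v_{xx})=\tfrac12\big((2u_x+v_x)^2\big)_x$, one integration by parts gives
\begin{align*}
M'(t)+N'(t)&=-\int_0^\infty (2u_x+v_x)(2m+n)\,dx
=\tfrac12\Big[(2u_x+v_x)^2-(2u+v)^2\Big]_{0}^{\infty}\\
&=-\tfrac12\big(2u_x(t,0)+v_x(t,0)\big)^2\leq 0,
\end{align*}
where the boundary term at $\infty$ vanishes and $u(t,0)=v(t,0)=0$ was used at $0$. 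Hence $M(t)+N(t)$ is non‑increasing, so $\|m(t)\|_{L^1},\|n(t)\|_{L^1}\leq 2\big(M(0)+N(0)\big)=\|m_0+n_0\|_{L^1}$ for every $t\in[0,T^*)$.

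Finally, since $u=p*m$ and $v=p*n$ with $\|p\|_{L^\infty}=\|p_x\|_{L^\infty}=\tfrac12$, Young's inequality yields $\|u\|_{L^\infty},\|u_x\|_{L^\infty}\leq\tfrac12\|m\|_{L^1}$ and $\|v\|_{L^\infty},\|v_x\|_{L^\infty}\leq\tfrac12\|n\|_{L^1}$, whence $\|u_x\|_{L^\infty}+\|v_x\|_{L^\infty}+\|u\|_{L^\infty}+\|v\|_{L^\infty}\leq 2\|m_0+n_0\|_{L^1}$ uniformly on $[0,T^*)$. If $T^*<\infty$, the integral in Theorem~\ref{theoxx} would then be finite, contradicting the maximality of $T^*$; therefore $T^*=\infty$. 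I expect the delicate points to be the rigorous justification of oddness preservation (making the reflection‑versus‑uniqueness argument precise in the Besov framework) and the boundary‑term/decay bookkeeping in the $M'+N'$ computation, both handled by the same density argument already used in the paper's conservation‑law proof; the algebraic identity for $M'(t)+N'(t)$ is the crux, since it is exactly the cancellation that turns an a priori quadratic bound on the half‑line masses into a monotonicity.
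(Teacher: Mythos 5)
Your proposal is correct and follows the same overall architecture as the paper: propagate oddness and the sign of $(m,n)$ via the flow \eqref{eq412} and the representation \eqref{eq414}, control the half-line mass $\int_0^\infty (m+n)\,dx$, convert that into a uniform bound on $\|u\|_{L^\infty}+\|v\|_{L^\infty}+\|u_x\|_{L^\infty}+\|v_x\|_{L^\infty}$ through $u=p*m$, $v=p*n$, and conclude with the blow-up criterion of Theorem~\ref{theoxx}. Two points where you genuinely diverge are worth recording. First, you obtain $q(t,0)\equiv 0$ directly from ODE uniqueness (since $(2u+v)(t,0)=0$ by oddness), whereas the paper keeps $q(t,0)$ general and handles the case $q(t,0)\neq 0$ by an oddness argument forcing $m=n=0$ on $[-q(t,0),q(t,0)]$; your route is shorter and reaches the sign property \eqref{g2} immediately. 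Second, and more substantively, at the half-line mass step the paper asserts exact conservation, $\frac{d}{dt}\int_0^{+\infty}(m+n)\,dx=0$, while your integration by parts yields the boundary term $-\tfrac12\bigl(2u_x(t,0)+v_x(t,0)\bigr)^2\le 0$, i.e.\ only monotone decrease. Your version is the one the computation actually supports: oddness of $u,v$ makes $u_x,v_x$ even, so $2u_x(t,0)+v_x(t,0)$ need not vanish (indeed $u_x(t,0)=\int_0^\infty e^{-y}m(t,y)\,dy\ge 0$ under the sign condition), so the paper's claimed identity is not justified as stated; fortunately only the upper bound on $\|m\|_{L^1}+\|n\|_{L^1}$ is needed, so your non-increase argument closes the proof and in fact repairs this step.
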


\begin{proof}
Set $T$ be the maximal time of $m(t,x)$ and $n(t,x)$.
One can easily deduce that $m(t,x)$ and $n(t,x)$ are odd functions if $m_0(x)$ and $n_0(x)$ are odd functions.
According to \eqref{eq414}, we can deduce that if $m_0\leq 0$ and $n_0\leq 0$ when $x\leq 0$, $m_0\geq 0$ and $n_0\geq 0$ when $x\geq 0$, then we obtain
\begin{align}\label{g1}
m(t,x)\leq 0, n(t,x)\leq 0~~when~~ x\leq q(t,0); ~~ m(t,x) \geq 0, n(t,x)\geq 0~~ when~~ x\geq q(t,0),~~\forall t\in [0,T),
\end{align}
where $q(t,\xi)=\xi+\int_{0}^{t}(2u+v)(\tau,q(\tau,\xi))d\tau$ is the characteristic curves.

Next we want to prove that
\begin{align}\label{g2}
m(t,x)\leq 0, n(t,x)\leq 0~~when~~ x\leq 0; ~~ m(t,x) \geq 0, n(t,x)\geq 0~~ when~~ x\geq 0,~~\forall t\in [0,T).
\end{align}
If $q(t,0)=0$ for any $t\in[0, T)$, then by \eqref{g1} we immediately get \eqref{g2}. Other if $q(t,0)>0$ (or $q(t,0)<0$) for some $t\in[0, T)$, by \eqref{g1} we deduce that
$$
m(t,x)\leq 0, n(t,x)\leq 0~~when~~ x\in [-q(t,0),0]; ~~ m(t,x) \geq 0, n(t,x)\geq 0~~ when~~ x\in [0,q(t,0)].
$$
Since $m(t,x), n(t,x)$ are odd functions such that $m(t,x)=-m(t,-x), n(t,x)=-n(t,-x),~x\in [-q(t,0),q(t,0)]$, we obtain
$$
m(t,x)= 0, n(t,x)= 0~~when~~ x\in [-q(t,0),q(t,0)].
$$
By \eqref{g1} again we still get \eqref{g2}. Note that the function $u(t,0)=m(t,0)=0$, $v(t,0)=n(t,0)=0$.
Using the characteristic method and calculations similar to Theorem 4.2, we have
$$\frac{d}{dt} \int_{0}^{+\infty}(m+n)dx^{'}=0,$$
then
$$\frac{d}{dt} \int_{\mathbb{R}}|m|+|n|dx^{'}=\frac{d}{dt} \int_{\mathbb{R}}|m+n|dx^{'}=2\frac{d}{dt} \int_{0}^{+\infty}(m+n)dx^{'}=0,$$
thus,
$$\|m\|_{L^1}+\|n\|_{L^1}=\|m_0\|_{L^1}+\|n_0\|_{L^1}.$$
Similar to the proof of Theorem 4.3, we get
\begin{equation}
\begin{split}
&\|u\|_{L^\infty}+\|u_x\|_{L^\infty}\leq \|p*m\|_{L^\infty}+\|p_x*m\|_{L^\infty}\leq C\|m\|_{L^1}\leq C (\|m_0\|_{L^1}+\|n_0\|_{L^1}),
\\
&\|v\|_{L^\infty}+\|v_x\|_{L^\infty}\leq \|p*n\|_{L^\infty}+\|p_x*n\|_{L^\infty}\leq C\|n\|_{L^1}\leq C (\|m_0\|_{L^1}+\|n_0\|_{L^1}).
\end{split}
\end{equation}
According to (4.21), we have
\begin{equation*}
\|u_x\|_{L^\infty}+\|v_x\|_{L^\infty} +\|u\|_{L^\infty}+\|v\|_{L^\infty}\leq C(\|m_0\|_{L^1}+\|n_0\|_{L^1}).
\end{equation*}
By the blow-up criteria we obtain the global existence.
\end{proof}

\section{Conclusion}

In this paper, we have studied the local well-posedness of Popowicz system in  Besov spaces $B^s_{p,r}\times B^s_{p,r}$ with $s> \max\{2, \frac{1}{p}+\frac{3}{2}\}$
or $(s=2, 2\leq p \leq \infty, 1\leq r\leq 2)$,
which is an extension of Besov spaces $B^s_{p,r}$ with $1\leq p, r\leq +\infty$ and $s>\max\{\frac{5}{2}, 2+\frac{1}{p}\}$ in \cite{3}.
We also obtained a new blow up criterion, which is an extension of \cite{4} and \cite{5}.
Besides, we studied the global existence with different initial values of Popowicz systems for the first time.
As the interacting system of C-H and D-P equations, Popowicz system has many properties worthy of further study, such as blow-up of solutions \cite{15}, existence of weak solutions \cite{16}, global conservation weak solutions \cite{19}, dynamic behavior of solitons \cite{Tan} and ill-posedness \cite{Guo},  which will be studied in our future work.

\smallskip
\noindent\textbf{Acknowledgments} This work was
This work was partially supported by NNSFC (Nos. 12171493, 11671407),FDCT (No. 0091/20181A3), the Natural Science Foundation of Hunan Province (No. 2021JJ40434), and
the Scientific Research Project of the Hunan Education Department (No. 21B0510).


\phantomsection
\addcontentsline{toc}{section}{\refname}
\bibliographystyle{abbrv} 
\bibliography{Feneref}

\end{document}